%
%

%

\documentclass[11pt,a4paper,reqno]{amsart}
\usepackage[T1]{fontenc}

\usepackage{amsfonts,amsmath,graphics,epsfig,latexsym,times}
\usepackage{ae,aecompl}
\usepackage[all]{xy}

\newcommand{\eq}[1][r]
   {\ar@<-3pt>@{-}[#1]
    \ar@<-1pt>@{}[#1]|<{}="gauche"
    \ar@<+0pt>@{}[#1]|-{}="milieu"
    \ar@<+1pt>@{}[#1]|>{}="droite"
    \ar@/^2pt/@{-}"gauche";"milieu"
    \ar@/_2pt/@{-}"milieu";"droite"}
\newif\ifpix \pixtrue

\usepackage[colorlinks=true]{hyperref}
\hypersetup{ pdfstartview=FitH }

\hyphenation{sin-gu-la-ri-ties Hir-ze-bru-ch  re-para-metri-za-tion pa-ra-b-ol-ically
  sca-lar re-so-lu-tion poly-stable}

\numberwithin{equation}{section}

%




%
%
\addtolength{\textwidth}{1cm}
\addtolength{\voffset}{1cm}

%
%
%
%

\def\ZZ{{\mathbb{Z}}}    \def\QQ{{\mathbb{Q}}} \def\CC{{\mathbb{C}}}
\def\RR{{\mathbb{R}}}    
\def\cO{{\mathcal{O}}}

\def\cX{{\mathcal{X}}}
\def\cXhat{{\widehat {\mathcal{X}}}}
\def\cM{{\mathcal{M}}}

\def\cMhat{{\widehat {\mathcal{M}}}}

\def\Uhat{{\widehat {{U}}}}

\def\cK{{\mathcal{K}}}

\def\cY{{\mathcal{Y}}}
\def\cH{{\mathcal{H}}}

\def\cD{{\mathcal{D}}}

\def\cN{{\mathcal{N}}}

\def\cYhat{\widehat{\mathcal{Y}}}
\def\cNhat{\widehat{\mathcal{N}}}

\def\fh{{\mathfrak{h}}}
\def\fg{{\mathfrak{g}}}

\renewcommand{\epsilon}{\varepsilon}

\newcommand{\SU}{\mathrm{SU}}

\newcommand{\scal}{\mathrm{scal}}

\newcommand{\id}{\mathrm{id}}
\newcommand{\U}{\mathrm{U}}

\newcommand{\Ric}{\mathrm{Ric}}

\newcommand{\RP}{\mathbb{RP}}
\newcommand{\CP}{\mathbb{CP}}

\newcommand{\del}{\partial}
\newcommand{\delb}{\bar\partial}

\renewcommand{\Re}{\mathrm{Re}}

\newcommand{\ovom}{\overline{\omega}}

\newcommand{\ombar}{\ovom}

\newcommand{\omod}{\omega^{\mathrm{mod}}}

\newcommand{\Imod}{\mathrm{I}}
\newcommand{\Ieuc}{{\mathrm{euc}}}

\newcommand{\gmod}{\mathrm{g}}

\newcommand{\geuc}{{g^{\mathrm{euc}}}}

\newcommand{\Xbar}{{\overline{X}}}

\newcommand{\obar}{\overline{\omega}}
\newcommand{\gbar}{\overline{g}}

\newcommand{\Xhat}{{\widehat{X}}}

\newcommand{\Sbar}{\overline{\Sigma}}

\newcommand{\hkto}{\hookrightarrow}
\newcommand{\setC}{\mathbb{C}}
\newcommand{\setR}{\mathbb{R}}
\newcommand{\setZ}{\mathbb{Z}}
\newcommand{\setQ}{\mathbb{Q}}
\newcommand{\orb}{\mathrm{orb}}
\newcommand{\prs}{\mathfrak{R}^+}

\newcommand{\ordre}{\mathbf{d}}
\newtheorem{lemma}{Lemma}
\newtheorem{prop}[lemma]{Proposition}
\newtheorem{cor}[lemma]{Corollary}
\newtheorem{theo}[lemma]{Theorem}
\newtheorem{theointro}{Theorem}

\theoremstyle{definition}
\newtheorem{dfn}[lemma]{Definition}
\newtheorem{rmk}[lemma]{Remark}

\newtheorem*{exa}{Example}

\newtheorem*{rmk*}{Remark}
\newtheorem*{rmks*}{Remarks}



\title[Smoothing extremal surfaces]{Smoothing singular extremal K\"ahler surfaces and minimal Lagrangians}

\author{Olivier Biquard}
\address{Olivier Biquard, UPMC Universit\'e Paris 6 et \'Ecole Normale Sup\'erieure} 
\email{olivier.biquard@upmc.fr}

\author{Yann Rollin}
\address{Yann Rollin, Laboratoire Jean Leray, Universit\'e de Nantes}
\email{yann.rollin@univ-nantes.fr}

\begin{document}
{\Huge \sc \bf\maketitle}
\begin{abstract}
  Given a complex surface $\cX$ with singularities of class $T$ and no
  nontrivial holomorphic vector field, endowed with a K\"ahler class $\Omega_0$,
  we consider smoothings $(\cM_t,\Omega_t)$, where $\Omega_t$ is a K\"ahler class on
  $\cM_t$ degenerating to $\Omega_0$. Under an hypothesis of non degeneracy of
  the smoothing at each singular point, we prove that if $\cX$ admits an
  extremal metric in $\Omega_0$, then $\cM_t$ admits an extremal metric in $\Omega_t$
  for small $t$.
  
  In addition, we construct small Lagrangian stationary spheres which
  represent Lagrangian vanishing cycles when $t$ is small.
\end{abstract}

\section{Introduction}
\label{sec:intro}
Let $\cX$ be a normal compact complex surface with singularities of
class T. Such singularities are isolated and of orbifold type $\setC^2/\Gamma$
for some finite group $\Gamma\subset\U_2$. The possible singularities are either
rational double points ($\Gamma\subset\SU_2$) or cyclic quotient singularities of
type $\frac 1{dn^2}(1,dna-1)$, for positive integers $d$, $n$, $a$ such that
$a$ and $n$ are relatively prime.

Since there is no general answer to the existence problem of constant
scalar curvature K\"ahler metrics (CSCK for short), numerous
constructions relying on gluing techniques have been made (cf. for
instance \cite{RolSin05, ArePac06, ArePac09}). In the case of
canonical singularities (rational double points), the idea is to start
from a CSCK orbifold metric on $\cX$ and to deduce a smooth CSCK
metric on the minimal resolution $\cXhat$ by perturbation theory.  The
exceptional divisor of $\cXhat\to \cX$ is a union of holomorphic $-2$
spheres, whose configuration is described by the Dynkin diagram of a
complex semisimple Lie algebra $\fg_\setC$ determined by $\Gamma$ (this is a
part of the McKay correspondence).

In this work we consider smoothings of singularities rather than
resolutions. We prove, under natural hypotheses, that
$\setQ$-Gorenstein smoothings admit CSCK metrics.

In the special case of K\"ahler-Einstein metrics, the desingularizations
cannot carry K\"ahler-Einstein metrics unless $c_1(\cX)=0$, since they
contain holomorphic $-2$ spheres.
But smoothings may carry K\"ahler-Einstein metrics, and our result gives
a construction of those K\"ahler-Einstein metrics close to the singular
ones. Also it is interesting to note that for general singularities of
class T, the smoothings are not diffeomorphic to the minimal resolution.

If the homology class of a $-2$ sphere of the desingularization is
Lagrangian in the smoothing, we prove that it can be represented by a small
Hamiltonian stationary sphere. This gives a concrete construction of the
Lagrangian minimizer, whose existence is proved by Schoen and
Wolfson \cite{SchWol01}.

\subsection{CSCK metrics}
We now give precise statements.  Let $\cX$ be a normal complex surface with
quotient singularities.  We consider a $\setQ$-Gorenstein smoothing of $\cX$,
denoted $p:\cM\to\Delta$, where $\Delta$ is an open disc centered at the origin in
$\setC$. This means that a multiple of the canonical divisor of $\cM$ is
Cartier and $\cM$ is Cohen-Macaulay. Moreover, the central fiber is the
given $\cX$, and the general fiber $\cM_t$ is smooth.

K\'ollar and Shepherd-Barron \cite{KS88} proved that the singularities
which appear must be of class $T$; this class subdivides into two classes:
\begin{itemize}
\item canonical singularities of type $\setC^2/\Gamma$ for a finite subgroup
  $\Gamma\subset\SU_2$; one gets the list $A_d$ ($d\geq 1$), $D_d$ ($d\geq 4$), $E_d$
  ($d=6, 7, 8$);
\item cyclic quotients obtained by a quotient of a $A_{dn-1}$
  singularity by $\setZ_n$.
\end{itemize}

The local theory of smoothings of such a singularity is well
understood: there is an (explicit) hypersurface $\cY \subset \setC^3\times\setC^d$, such
that the projection $p:\cY\to\setC^d$ is a $\setQ$-Gorenstein smoothing of
$\cY_0\simeq \setC^2/\Gamma$. Moreover, any $\setQ$-Gorenstein smoothing of $\cY_0$ is
isomorphic to the pull-back of $p$ under a germ of holomorphic maps
$f:\setC\to\setC^d$.  The fibers $\cY_t=p^{-1}(t)$ over the discriminant locus
$\cD\subset \setC^d$ are singular. This leads us to introduce a non degeneracy
condition in the following way. As recalled below, there is a
particular ramified covering map $\pi:\setC^d\to\setC^d$ associated to the
singularity. This map is ramified exactly above $\cD$, and moreover
the ramification locus $\pi^{-1}(\cD)$, which turns out to be a union of
linear hyperplanes:
$$ \pi^{-1}(\cD) = \cup H_i. $$
Up to taking a ramified covering (which we choose of smallest possible
degree), the germ $f:\setC\to\setC^d$ can be lifted to
$\hat f:\setC\to\setC^d$ such that $f=\pi\circ \hat f$. The following definition is a
way to say that the deformation is transversal to the discriminant
locus:
\begin{dfn}\label{df:degenerate}
  We say that a smoothing $\cX\hkto \cM\to\Delta$ is non degenerate at the singular
  point $x\in \cX$ if the corresponding $\hat f$ satisfies $\frac{\partial\hat
    f}{\partial t}(0) \notin \cup H_i$.
\end{dfn}
\begin{exa}
  Consider the $A_k$-singularity $w^{k+1}=xy$, desingularized by the
  family
  $$ xy=w^{k+1}+a_{k-1}w^{k-1}+a_{k-2}w^{k-2}+\cdots +a_0.$$
  A deformation is given by a map $f(z)=(a_0(z),\ldots,a_{k-1}(z))$. It is easy to
  check that if $\frac{\partial a_0}{\partial z}\neq 0$ then the deformation is non degenerate.
\end{exa}

To state the theorem, we need to fix a K\"ahler class. Along a ray to the
origin in $\Delta$, the Gau\ss-Manin connection identifies $H^2(\cM_t,\setR)$ with a
fixed vector space; going to the origin, we identify this vector space with
$$H^2_\orb(\cX,\setR)\oplus \underset{x \text{ singular}}{\oplus}\setR^{e_x},$$
where $e_x$ is the dimension of the real $H^2$ of the local smoothing of
the singularity at $x$. As we shall see later, if $d_x$ is the
dimension of the space of smoothings of the singularity at $x$,
$$ e_x = \begin{cases} d_x & \text{for a canonical singularity,} \\ d_x-1 &
  \text{for other singularities.} \end{cases} $$
(One cannot get such an identification on the whole disk, because of the
monodromy of the Gau\ss-Manin connection; but since the data of a K\"ahler
class is real, it is natural to give it only on a ray).

The simplest form of our results can now be stated:
\begin{theointro}\label{th:CSCK1}
  Suppose that we are given
  \begin{itemize}
  \item a normal complex surface $\cX$, with no holomorphic vector
    fields, and a $\setQ$-Gorenstein smoothing $\cX\hkto\cM\to\Delta$, which is non
    degenerate at each singular point;
  \item along the ray $t\in \setR_+\cap \Delta$, a K\"ahler class
    $\Omega_t\in H^2(\cM_t,\setR)$, 
    depending smoothly on $t$, such that $\Omega_0$ is an orbifold K\"ahler class
    on $\cX$, containing an orbifold CSCK metric.
  \end{itemize}
  Then for small $t>0$ the class $\Omega_t$ contains a CSCK metric on $\cM_t$.
\end{theointro}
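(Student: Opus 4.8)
The plan is a gluing construction: for small $t$ we build on $\cM_t$ a family of approximately CSCK K\"ahler metrics $\omega_t$ in the class $\Omega_t$, and then deform each of them to a genuine CSCK metric by a fixed point argument.

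\emph{Approximate solutions.} We start from the orbifold CSCK metric $\omega_\cX\in\Omega_0$ on $\cX$. Near each singular point $x$, the smoothing $\cM_t$ is biholomorphic to the local $\setQ$-Gorenstein smoothing $\cY_{f(t)}$ of $\setC^2/\Gamma_x$; the non degeneracy condition of Definition~\ref{df:degenerate} guarantees that for small $t\ne 0$ this fibre is smooth and arises among Kronheimer's ALE hyperk\"ahler metrics (for a canonical singularity), or as a free $\setZ_n$-quotient of one (for the remaining class $T$ singularities), hence carries an ALE scalar-flat K\"ahler metric $\eta_x$ asymptotic to the Euclidean cone metric on $\setC^2/\Gamma_x$. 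Choosing $\eta_x$ inside this finite-dimensional family of ALE metrics so that $[\eta_x]$ is proportional to the $\setR^{e_x}$-component of $\Omega_t$ fixes a scaling parameter $\epsilon_x=\epsilon_x(t)\to 0$ with $\epsilon_x^2[\eta_x]$ equal to that component. One then glues $\omega_\cX$ to the rescaled models $\epsilon_x^2\eta_x$ across annular neck regions interpolating between the scale $\epsilon_x$ and a fixed radius (patching K\"ahler potentials so that $[\omega_t]=\Omega_t$ exactly, which is a cohomological adjustment using the Gau\ss-Manin identification), and, writing $\epsilon=\max_x\epsilon_x(t)$, one estimates in weighted H\"older norms $\holder$ — the weight measuring distance to the former singular set, $\delta$ in a suitable interval — that $\|\scal(\omega_t)-\underline{s}_t\|_{\holder}\le C\epsilon^{\beta}$ for some $\beta>0$, where $\underline{s}_t=4\pi\,c_1(\cM_t)\cdot\Omega_t/\Omega_t^2$ is the topological average of the scalar curvature.

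\emph{Uniform linear theory.} The linearisation of $\phi\mapsto\scal(\omega_t+i\partial\bar\partial\phi)$ at $\phi=0$ is $-\cL_{\omega_t}$ plus a first-order term whose coefficients involve $\nabla\scal(\omega_t)$ and are therefore $O(\epsilon^\beta)$; here $\cL_{\omega_t}$ is the Lichnerowicz operator, a self-adjoint fourth-order operator with leading term $\tfrac12\Delta^2$ and kernel the space of holomorphy potentials. The core analytic step is to produce a right inverse $G_t$ for this operator on functions of $\omega_t$-mean zero, with $\|G_t\|\le C\epsilon^{-\gamma}$ uniformly as $t\to 0$. On $\cX$ the hypothesis that $\cX$ has no holomorphic vector field forces $\cL_{\omega_\cX}$ to be an isomorphism of mean-zero $\holder$ functions; on each ALE model $\cL_{\eta_x}$ is Fredholm between the corresponding weighted spaces, and the absence of holomorphic vector fields on $\cY_{f(t)}$ decaying at infinity, together with the chosen weight, rules out its kernel and cokernel. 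Patching these inverses with cut-off functions yields a parametrix for $\cL_{\omega_t}$ whose error is supported in the necks and small there (because the weights interpolate), and this error is removed by a convergent Neumann series; the loss $\epsilon^{-\gamma}$ reflects the lengthening of the necks as $t\to 0$.

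\emph{Conclusion.} With $G_t$ at hand the CSCK equation becomes the fixed point problem $\phi=G_t\bigl(-(\scal(\omega_t)-\underline{s}_t)-Q_t(\phi)\bigr)$, where $Q_t$ gathers the quadratic and higher nonlinear terms; $Q_t$ is Lipschitz with arbitrarily small constant on a small ball of $\holder$, and the inhomogeneous term has norm $\le C\epsilon^{\beta}$, so for $t$ small the contraction mapping principle produces $\phi_t$ with $\|\phi_t\|_{\holder}\to 0$, and $\omega_t+i\partial\bar\partial\phi_t$ is a CSCK metric on $\cM_t$ in $\Omega_t$ (smooth by elliptic regularity). The main obstacle is the uniform linear estimate: choosing the weighted spaces so that the Lichnerowicz operator on the ALE models is invertible in exactly the required sense, and controlling the dependence of $\|G_t\|$ on $\epsilon$ through the parametrix as the necks degenerate. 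The three hypotheses — no holomorphic vector fields, non degeneracy at each singular point, and the prescribed degeneration of $\Omega_t$ — are precisely what make each ingredient of this estimate available: respectively the invertibility of $\cL$ on $\cX$, the existence and good asymptotics of the ALE models identifying $\cM_t$ near $x$, and the matching of the extra K\"ahler parameters $\setR^{e_x}$ with the model periods.
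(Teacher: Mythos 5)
Your outline follows the right global strategy (approximate solution by gluing an ALE model to the orbifold metric, uniform weighted linear estimates for the Lichnerowicz operator, contraction mapping), but there is a genuine gap at the very first step, and it is precisely the point the paper singles out as its main technical contribution. You write that one glues $\omega_\cX$ to the rescaled ALE models by ``patching K\"ahler potentials so that $[\omega_t]=\Omega_t$ exactly.'' This does not make sense as stated: $\omega_\cX$ is a K\"ahler form for the complex structure $J_0$ on $\cX$, the ALE model metric $\eta_x$ is K\"ahler for a third complex structure, and the fibre $\cM_t$ carries yet another complex structure $J_t$, which near the singularity agrees with the model and far from it is only close to $J_0$. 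A convex combination of the two K\"ahler forms (or of potentials) in the neck is therefore in general neither $d$-closed nor of type $(1,1)$ for $J_t$. Unlike the blow-up constructions of Arezzo--Pacard and Sz\'ekelyhidi, here the complex structure changes globally, and a K\"ahler form cannot be manufactured by a cut-and-paste of potentials. The paper resolves this by a singular extension of Kodaira's stability theorem: it first builds a $J_t$-Hermitian metric $h_t$ whose fundamental form $\varpi_t$ satisfies $\|d\varpi_t\|_{\fC^{k,\alpha}_{\delta-1}}=\cO(\epsilon^2)$, then uses uniform Hodge theory for the Dolbeault Laplacian $\Box_t$ (Proposition~\ref{prop:linearapprox}) and the degeneration of the Fr\"olicher spectral sequence to replace $\varpi_t$ by a nearby $d$-closed $(1,1)$-form, and finally corrects the cohomology class to $\Omega_t$ using the comparison of harmonic representatives on the glued manifold, the orbifold and the graviton. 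All of section~\ref{sec:rep} is devoted to this, and without it your approximate solution does not exist.

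There is also a smaller inaccuracy in your linear step. With the weights you implicitly use ($\delta+2>0$ so that the model inverse on the ALE piece is surjective), the Lichnerowicz operator on the punctured orbifold is \emph{not} an isomorphism of mean-zero weighted H\"older functions: the hypothesis of no holomorphic vector fields kills the kernel, but the operator has index $-k$ (one for each singular point), hence a nontrivial cokernel. The paper deals with this by enlarging the domain $C^{4,\alpha}_{\delta+2}$ with cutoff constants near each puncture (equation~\myref{eq:6}), which restores index zero and allows the gluing of inverses; these added constants are also responsible for the loss $\epsilon^{-\beta(\delta+2)}$ in the estimate for $\|Q_t\|$. Your proposal assumes an isomorphism where only an augmented-space argument works.
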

The behavior of the CSCK metric for $t$ small is well understood: outside
the singularities it converges to the orbifold CSCK metric on $\cX$, but
near a singularity $x$ some rescaling converges to an ALE K\"ahler Ricci flat
space, which appears as the `bubble' at $x$.

One special case is when the smoothing is given with a polarization $\Omega$,
resulting in a constant $\Omega_t$. This covers in particular the
K\"ahler-Einstein case: when $\Omega_t=\pm c_1(\cM_t)$ our result gives a
construction of the K\"ahler-Einstein metric on the smoothing of $\cX$, as
well as a concrete description of its degeneracy to an orbifold
K\"ahler-Einstein metric. This K\"ahler-Einstein picture is recently obtained
in the case of $A_1$ singularities by Spotti \cite{SpX12}.

Complex orbifold singularities of codimension at least $3$ are rigid by a
result of Schlessinger.  However, it would be interesting to extend our
results in dimension $3$ or more, allowing orbifold singularities of
codimension $2$. A natural problem would be to remove the 
triviality assumption of the automorphism group: one may expect the existence of the CSCK metric
to be related to a K-stability property, like in \cite{SzX10}.

\medskip
One can replace the non degeneracy hypothesis on the smoothing $\cM_t$ by a
weaker non degeneracy hypothesis on the data $(\cM_t,\Omega_t)$ of the smoothing
with the K\"ahler class. This requires slightly more material that we now
explain.

Let us come back to the $\setQ$-Gorenstein smoothing $\cY\to\setC^d$ of a canonical
singularity. The parameter space $\setC^d$ can be identified with $\fh_\setC/W$,
where $\fh_\setC$ is a Cartan subalgebra of the Lie algebra $\fg_\setC$ associated
to $\Gamma$ by the McKay correspondence ($\fg_\setC$ is exactly the $A_d$, $D_d$ or
$E_d$ simple Lie algebra), and $W$ is the Weyl group. Here the
canonical projection $\fh_\CC\to\CC^d$ is ramified over the
discriminant locus $\cD\subset\CC^d$. The real 2-cohomology
is isomorphic to the real Cartan subalgebra $\fh_\setR$ (actually, the
smoothing is diffeomorphic to the minimal resolution, whose $-2$ spheres
give a basis of $(\fh_\setR)^*$). But there is some ambiguity in this
identification, because the monodromy of the Gau\ss-Manin connection is given
by the action of $W$.

In the smoothing $\cY\to\setC^d=\fh_\setC/W$, the general fiber is smooth, but there
are singular fibers, given by the walls of the Weyl chambers of
$\fh_\setC$. Nevertheless, there exist a simultaneous resolution of
singularities $\cYhat\to\fh_\setC$ (Brieskorn, Slodowy). Then all the fibers are
smooth and their real 2-cohomology can be identified to~$\fh_\setR$.

The case of the other T singularities is similar: they are quotients of
$A_{dn-1}$ singularities by a $\setZ_n$ action, and one obtains the same
structure by taking the $\setZ_n$ fixed points of the $A_{dn-1}$ simultaneous
resolution: the space of parameters is therefore $\fh_\setC^{\setZ_n}\simeq \setC^d$, and
the real 2-cohomology is $\fh_\setR^{\setZ_n}\simeq \setR^{d-1}$.

Now come back to our setup of a smoothing $\cM\to\Delta$ with a family of K\"ahler
classes $\Omega_t$. At a singular point $x\in \cX$, the smoothing is induced from
the standard smoothing $\cY$ by a map (we denote the dependence in $x$ by
an index $x$)
$$ f_x:\Delta_c \longrightarrow ((\fh_\setC)_x/W_x)^{G_x}, $$
where $G_x=1$ for a canonical singularity, and $\Delta_c\subset\Delta$ is a smaller disk of
radius $c$. Up to a finite ramified covering $\Delta_b\to\Delta_c$, we can lift this map into a map into
$(\fh_\setC)_x^{G_x}$:
$$ (\zeta_c)_x:\Delta_b \longrightarrow (\fh_\setC)_x^{G_x}. $$

Up to taking a higher degree covering, we can take the same covering
$\Delta_b\to\Delta_c$, say of order $\ordre$, for all singular points. Adding the
data $(\zeta_r)_x$ of the K\"ahler class in a neighborhood of the
singularity, we obtain at each singularity a map
$$ \zeta_x:\Delta_b \longrightarrow (\fh_\setR)_x^{G_x} \oplus (\fh_\setC)_x^{G_x} , \quad \zeta_x=((\zeta_r)_x,(\zeta_c)_x), $$
which represents the deformation of both the K\"ahler parameter and the
complex parameter. We consider the K\"ahler class as depending smoothly on the
parameter on $\Delta_b$ (since $\Delta_b$ is a covering, this is more general than
the setting of Theorem~\ref{th:CSCK1}). Then at the origin we have for some
integer $p$
$$ \zeta_x(t) = t^p \dot \zeta_x + O(t^{p+1}). $$
We can now define an extended notion of non degeneracy:
\begin{dfn}\label{df:degenerate2}
  We say that $(\cM_t,\Omega_t)$ is non degenerate at the point $x$ if $p\leq
  \ordre$ and $\dot \zeta_x$ does not belong to a wall of a Weyl chamber.
\end{dfn}

The above definition is clearly a generalization of
Definition~\ref{df:degenerate}.  We now extend Theorem~\ref{th:CSCK1}
under the form:
\begin{theointro}\label{th:CSCK2}
  Suppose that we are given
  \begin{itemize}
  \item a normal complex surface $\cX$, with no holomorphic vector
    fields, and a $\setQ$-Gorenstein smoothing $\cX\hkto\cM\to\Delta$;
  \item along the ray $t\in \setR_+\cap \Delta_b$, a K\"ahler class $\Omega_t\in H^2(\cM_t,\setR)$, such
    that $\Omega_0$ is an orbifold K\"ahler class on $\cX$, containing an
    orbifold CSCK metric.
  \end{itemize}
  If $(\cM_t,\Omega_t)$ is non degenerate at each singular point in the sense of
  Definition~\ref{df:degenerate2}, then for small $t>0$ the class $\Omega_t$
  contains a CSCK metric on $\cM_t$.
\end{theointro}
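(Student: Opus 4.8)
\medskip
\noindent\textbf{Strategy.}
The plan is to run a gluing / singular perturbation argument. For small $t>0$ we build on $\cM_t$ an explicit \emph{approximately} CSCK K\"ahler metric $\omega_t$ in the prescribed class $\Omega_t$, obtained by grafting the given orbifold CSCK metric on $\cX$ onto a small rescaled ALE K\"ahler Ricci-flat metric at each singular point, and then correct it to a genuine CSCK metric by solving $\scal(\omega_t+i\del\delb\phi)=\mathrm{const}$ via a quantitative implicit function theorem. The assumption that $\cX$ carries no holomorphic vector fields --- together with the fact that the ALE bubbles carry none in a suitably decaying sense --- is precisely what makes the relevant linearized operator invertible, uniformly in $t$, in appropriately weighted spaces; this is the analytic core. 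The extra content of Definition~\ref{df:degenerate2} relative to Definition~\ref{df:degenerate} enters only in the identification of the correct local model at each singularity and in matching the class $\Omega_t$; once that bookkeeping is done, the analysis is the same, and Theorem~\ref{th:CSCK1} is recovered as the case where the K\"ahler data is already defined on $\Delta$, i.e. $\ordre=1$.

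\medskip
\noindent\textbf{Step 1: local models.}
Near a singular point $x\in\cX$ the smoothing $\cM\to\Delta$ is, by the local classification recalled above, induced from the standard $\setQ$-Gorenstein family $\cY\to(\fh_\setC)_x/W_x$ (or its $G_x$-fixed part), so after passing to the covering $\Delta_b\to\Delta_c$ the fiber $\cM_t$ near $x$ is biholomorphic to $\cY_{(\zeta_c)_x(t)}$. The family $\cY$ admits a weighted $\setC^*$-action acting on the base $(\fh_\setC)_x$ with positive weights, and combining this with the Brieskorn--Slodowy simultaneous resolution $\cYhat\to\fh_\setC$ shows that, after rescaling the metric near $x$ by the appropriate power of $t$, the complex structure converges to that of the fixed space $Z_x:=\cY_{\dot\zeta_x}$; because $\dot\zeta_x$ does not lie on a wall, $Z_x$ is a \emph{smooth} affine surface asymptotic to $\setC^2/\Gamma_x$, and the inequality $p\le\ordre$ is exactly what guarantees that the K\"ahler part $(\zeta_r)_x$ of the data is not killed by the covering but survives the rescaling and produces a nontrivial ALE K\"ahler class. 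By the ALE Calabi--Yau theory (Kronheimer in the canonical case and its $\setZ_n$-quotient in general), $Z_x$ then carries a unique ALE K\"ahler Ricci-flat metric $\omega_{Z_x}$ asymptotic to the flat cone on $\setC^2/\Gamma_x$, with K\"ahler class the $(\fh_\setR)_x$-component of $\dot\zeta_x$.

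\medskip
\noindent\textbf{Step 2: approximate solution and cohomological matching.}
Fix an orbifold CSCK metric $\omega_\cX$ on $\cX$ in $\Omega_0$. On $\cM_t$ define $\omega_t$ to equal $\omega_\cX$ outside fixed small balls about the singular points, to equal $\varepsilon_x(t)^2\,\omega_{Z_x}$ in the core of the bubble at $x$ (with $\varepsilon_x(t)$ a power of $t$ dictated by the $\setC^*$-weights and by $p$), and to interpolate in the neck $\varepsilon_x(t)\ll|z|\ll 1$ by adding a cut-off K\"ahler potential. The neck potentials must be chosen so that $[\omega_t]=\Omega_t$: here one uses the Gau\ss--Manin identification $H^2(\cM_t,\setR)\cong H^2_\orb(\cX,\setR)\oplus\bigoplus_x\setR^{e_x}$ from the introduction, the point being that the $\setR^{e_x}$-component of $\Omega_t$ --- which records the areas of the vanishing cycles --- equals $\varepsilon_x(t)^2[\omega_{Z_x}]$ up to higher order in $t$, again by the non-degeneracy of $(\cM_t,\Omega_t)$. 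A routine estimate in a weighted H\"older space $\holder$ (with $\delta$ between the indicial roots of the flat Laplacian on $\setC^2/\Gamma_x$) then yields $\|\scal(\omega_t)-\underline s_t\|_{\fC^{k-2,\alpha}_{\delta-2}}=O(t^\mu)$ for some $\mu>0$.

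\medskip
\noindent\textbf{Step 3: uniform invertibility, fixed point, and the main obstacle.}
Let $L_{\omega_t}$ be the (real) Lichnerowicz operator on functions controlling the linearization of $\phi\mapsto\scal(\omega_t+i\del\delb\phi)$; its kernel consists of potentials of holomorphic vector fields, which $\cX$ lacks by hypothesis, and no nonzero decaying solution exists on any $Z_x$. A blow-up/rescaling argument by contradiction --- extracting from a hypothetical sequence of almost-kernel elements a nonzero limit, either on $\cX$ or, after rescaling, on some $Z_x$ --- then shows that $L_{\omega_t}$ is invertible on the relevant weighted spaces with $\|L_{\omega_t}^{-1}\|\le C\,t^{-\nu}$, the loss coming from the small eigenvalues created by the shrinking necks. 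Choosing the weights and the gluing scales so that $\mu>\nu$, and using that the nonlinearity of $\scal$ is bounded and quadratic-and-higher in $\holder$, the contraction mapping principle produces $\phi_t$ with $\scal(\omega_t+i\del\delb\phi_t)$ constant for all small $t>0$: the sought CSCK metric in $\Omega_t$. I expect this last step to be the crux: the metrics $\omega_t$ genuinely degenerate as $t\to0$ (long collapsing necks, shrinking bubbles), so uniform elliptic estimates are not automatic, and one must carefully design the weighted analysis and, above all, control the small eigenvalues of $L_{\omega_t}$ so that the inverse blows up more slowly than the approximation error decays. A secondary, specifically Theorem~\ref{th:CSCK2} difficulty is the bookkeeping of Steps~1--2 when the complex and K\"ahler data scale with different exponents on the covering $\Delta_b$: one has to verify that Definition~\ref{df:degenerate2} is exactly the condition under which a well-defined, nontrivial ALE bubble with the prescribed K\"ahler class exists and the neck-matching remains solvable.
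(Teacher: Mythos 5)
Your overall strategy (local ALE model at each singularity via the tangent graviton, grafting, weighted Hölder analysis, implicit function theorem with no holomorphic vector fields ensuring uniform invertibility) is the same as the paper's, and Steps~1 and~3 are essentially a correct description of what the paper does in \S\ref{sec:defo} and \S\ref{sec:csck-metrics}.

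The genuine gap is in Step~2. You propose to produce the approximately CSCK background form $\omega_t$ by grafting the orbifold metric to a rescaled ALE metric and ``interpolating in the neck by adding a cut-off K\"ahler potential,'' then adjusting the potentials to hit the class $\Omega_t$. This overlooks what the paper identifies as one of its main technical points: since the complex structure $J_t$ of $\cM_t$ genuinely changes with $t$ (not merely the metric), the naive cut-and-paste does not produce a closed $(1,1)$-form --- the orbifold form $\omega_\cX$ is $J_0$-K\"ahler and the ALE form is $J_t$-K\"ahler, and one cannot interpolate with a potential because there is no single complex structure in which both pieces are of type $(1,1)$. This is not a bookkeeping issue but an analytic one, and the paper devotes all of \S\ref{sec:rep} to solving it as an extension of Kodaira's stability of the K\"ahler condition to the singular setting: one first builds a merely \emph{Hermitian} glued metric $h_t$ by projecting onto $J_t$-invariant bilinear forms, shows its fundamental form $\varpi_t$ is approximately $\Box_t$-harmonic (Lemma~\ref{lemma:estid}, Corollary~\ref{cor:estidelta}), develops uniform weighted Hodge theory for the Dolbeault Laplacian across the degenerating necks (Lemma~\ref{lemma:iroot}--Proposition~\ref{prop:linear}), uses a Fr\"olicher/$\partial\bar\partial$ argument to pass from a $\bar\partial$-harmonic $(1,1)$-form to a genuinely $d$-closed one (Lemma~\ref{lemma:closedgamma}), checks positivity (Corollary~\ref{cor:bgmet}), and finally perturbs within harmonic forms to land exactly in the class $\Omega_t$ (Lemma~\ref{lemma:estikahlclass}, Proposition~\ref{prop:omegap}). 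Without this, Step~2 of your proposal does not produce a K\"ahler metric at all, and the quantities $\scal(\omega_t)$ and the weighted estimates in Step~3 are not defined. A secondary remark: the paper actually reduces Theorems~\ref{th:CSCK1} and~\ref{th:CSCK2} to the canonical-singularity Theorem~\ref{th:CSCK3} and then handles general $T$-singularities by passing to $\setZ_n$-quotients in \S\ref{sec:gor}; you gesture at this but do not say that the reduction is how the proof is organized.
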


There is a final extension of the results in the case of canonical
singularities: then we do not need the initial family $\cM$ to be a
smoothing. Indeed suppose that $\cX\hookrightarrow\cM\to\Delta$ is any deformation of a surface
with rational double points, then near each singularity $x$, the family is
still induced by some map $\Delta_c\to(\fh_\setC)_x/W_x$ (because this is a
semi-universal deformation), which can again be lifted to
$(\zeta_c)_x:\Delta_b\to(\fh_\setC)_x$. Pulling back $\cYhat\to(\fh_\setC)_x$ near each singular
point, we obtain a simultaneous resolution $\cMhat\to\Delta_b$ of the
singularities of $\cM\to\Delta$ and we can apply our method in this setting to
obtain:
\begin{theointro}\label{th:CSCK3}
  Suppose that we are given
  \begin{itemize}
  \item a normal complex surface $\cX$, with no holomorphic vector
    fields, with only rational double points, and a deformation
    $\cX\hkto\cM\to\Delta$; let then $\cMhat\to\Delta_b$ the simultaneous resolution of
    singularities as above;
  \item along the ray $t\in \setR_+\cap \Delta_b$, a K\"ahler class $\Omega_t\in H^2(\cMhat_t,\setR)$,
    such that $\Omega_0$ is an orbifold K\"ahler class on $\cX$, containing an
    orbifold CSCK metric.
  \end{itemize}
  If $(\cMhat_t,\Omega_t)$ is non degenerate at each singular point in the sense
  of Definition~\ref{df:degenerate2}, then for small $t>0$ the class $\Omega_t$
  contains a CSCK metric on $\cMhat_t$.
\end{theointro}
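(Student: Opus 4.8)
The plan is to reduce Theorem C to the machinery already developed for Theorem B, showing that the case of an arbitrary deformation of rational double points, after passing to the simultaneous resolution $\cMhat\to\Delta_b$, is formally indistinguishable from the smoothing situation. First I would observe that because rational double points admit a semi-universal deformation whose base is $(\fh_\setC)_x/W_x$, the given family $\cX\hkto\cM\to\Delta$ is, in a neighborhood of each singular point $x$, the pullback of the standard family $\cY_x\to(\fh_\setC)_x/W_x$ under a germ $f_x:\Delta_c\to(\fh_\setC)_x/W_x$; this is the only place where the hypothesis of \emph{only rational double points} (hence $G_x=1$ and $e_x=d_x$) is used, and it is what guarantees the base is genuinely $(\fh_\setC)_x/W_x$ rather than a fixed-point subspace. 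Lifting along the ramified covering $\Delta_b\to\Delta_c$ produces $(\zeta_c)_x:\Delta_b\to(\fh_\setC)_x$, and pulling back the Brieskorn--Slodowy simultaneous resolution $\cYhat\to(\fh_\setC)_x$ yields a smooth family $\cMhat\to\Delta_b$ whose fibers over $t\neq 0$ are diffeomorphic to the minimal resolution of $\cM_t$, while the central fiber is still $\cX$ (the resolution is an isomorphism away from the exceptional set, and over $0$ it contracts the $-2$ spheres back). The upshot is that near each $x$ the local model for $(\cMhat,\Omega_t)$ is exactly the one appearing in Theorem~\ref{th:CSCK2}: a curve $\zeta_x=((\zeta_r)_x,(\zeta_c)_x):\Delta_b\to(\fh_\setR)_x\oplus(\fh_\setC)_x$ controlling the K\"ahler and complex parameters, with leading term $\zeta_x(t)=t^p\dot\zeta_x+O(t^{p+1})$.

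Second, I would check that the hypotheses transfer correctly. The non-degeneracy condition of Definition~\ref{df:degenerate2} is stated intrinsically in terms of $\dot\zeta_x$ avoiding the Weyl walls and $p\leq\ordre$, so it makes sense verbatim for $\cMhat$; and the condition that $\Omega_0$ be an orbifold K\"ahler class on $\cX$ carrying an orbifold CSCK metric, together with the vanishing of holomorphic vector fields on $\cX$, is identical. The only subtlety is that in Theorem~C the K\"ahler class $\Omega_t$ lives on $\cMhat_t$, not on $\cM_t$; but since $\cMhat_t\to\cM_t$ is a resolution for $t\neq 0$, and since as $t\to 0$ the exceptional $-2$ spheres shrink (their periods are governed by $(\zeta_r)_x$, which tends to $0$), the class $\Omega_t$ still limits to the orbifold class $\Omega_0$ on $\cX$ in the sense required by the Gau\ss--Manin identification $H^2\cong H^2_\orb(\cX,\setR)\oplus\oplus_x(\fh_\setR)_x$. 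With these identifications in place, one applies the entire gluing/perturbation argument used for Theorem~\ref{th:CSCK2}: build an approximately-CSCK metric on $\cMhat_t$ by grafting the orbifold CSCK metric on $\cX$ to rescaled ALE Ricci-flat (or more precisely extremal) models of the resolved singularity $\cYhat$, controlled by $\zeta_x$, then correct to an exact CSCK metric by inverting the linearized operator, using the absence of holomorphic vector fields to kill the cokernel.

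The main obstacle, and the only genuinely new point compared to Theorem~B, is verifying that the local analysis near each singular point is truly insensitive to whether the ambient family $\cM$ is flat over $\Delta$ as a \emph{smoothing} or merely as a \emph{deformation}: in Theorem~B the fibers $\cM_t$ degenerate to the singular $\cX$, whereas here the fibers $\cMhat_t$ are all smooth and it is the ``hidden'' blown-down family $\cM_t$ that degenerates. Concretely, I expect the delicate step to be confirming that the weighted H\"older (or $\cF^{k,\alpha}_\delta$) estimates for the linearized CSCK operator on the neck region — where the rescaled ALE model is glued to the orbifold — depend only on the germ data $\zeta_x$ and on the fixed orbifold geometry of $(\cX,\Omega_0)$, and are uniform in $t$; once this is granted the fixed-point argument runs word-for-word. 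In particular I would have to make sure that, since we no longer assume $\cM\to\Delta$ is $\setQ$-Gorenstein, no positivity or Cohen--Macaulay hypothesis is secretly needed in the gluing — but for rational double points every deformation is automatically $\setQ$-Gorenstein (indeed Gorenstein), so this is in fact free, and Theorem~C follows.
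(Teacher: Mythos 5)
Your plan is in substance the paper's own proof of Theorem~\ref{th:CSCK3}: pass to the Brieskorn--Slodowy simultaneous resolution near each rational double point using the lift $(\zeta_c)_x:\Delta_b\to(\fh_\setC)_x$, identify the tangent graviton $Y_{\dot\zeta_x}$ via the leading term $\zeta_x(t)=t^p\dot\zeta_x+O(t^{p+1})$, build a glued K\"ahler metric on $\cMhat_t$ representing $\Omega_t$ (this is the content of Section~\ref{sec:rep}), and then perturb to CSCK by a fixed point argument in weighted H\"older spaces, using the absence of holomorphic vector fields to kill the cokernel (Section~\ref{sec:csck-metrics}). So the strategy matches.

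Two points of framing are worth noting, though. First, the paper's logical order is the reverse of yours: it proves Theorem~\ref{th:CSCK3} \emph{directly} (it is the master statement for canonical singularities), and then Theorem~\ref{th:CSCK1} and Theorem~\ref{th:CSCK2} are deduced from it and its $\setZ_n$-quotient version in Section~\ref{sec:gor}. Your ``reduce C to B'' cannot be taken literally: Theorem~\ref{th:CSCK2} assumes a $\setQ$-Gorenstein \emph{smoothing}, with smooth general fiber, whereas here $\cM_t$ may be singular for all $t$; being $\setQ$-Gorenstein (which is indeed automatic for rational double points) does not make it a smoothing. What you actually mean, and what works, is ``re-run the machinery,'' not ``invoke the statement.'' Second, your ``main obstacle'' paragraph overstates the difference between the two situations: once one passes to $\cMhat\to\Delta_b$, Sections~\ref{sec:rep} and~\ref{sec:csck-metrics} work on a \emph{fixed smooth manifold} $\Xhat$, with a smooth family of complex structures $J_t$ and a degenerating K\"ahler class; nowhere does the analysis see whether the blown-down fiber $\cM_t$ was smooth or not. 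So there is no uniformity issue to re-verify beyond what is already done in the paper, and the neck estimates (Lemma~\ref{lemma:estid}, Corollary~\ref{cor:estidelta}, Proposition~\ref{prop:linearapprox}) already depend only on $\dot\zeta_x$ and the orbifold geometry. The step that is genuinely nontrivial, and which you pass over quickly, is Section~\ref{sec:rep}: producing an honest K\"ahler form on $(\Xhat,J_t)$ in the class $\Omega_t$ (a singular analogue of Kodaira stability) before the IFT can even begin; the difficulty is that the cut-and-paste metric $h_t$ is only Hermitian, not K\"ahler, and one must project to closed $(1,1)$-forms with uniform weighted control.
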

This result covers the known case of minimal resolutions, but also the case
of partial resolutions which are not smoothings. It is not valid for
$T$ singularities because the deformations include other components that
are not induced from the $\setQ$-Gorenstein smoothing $\cY$.

\subsection{Hamiltonian stationary spheres}

Suppose we are under the hypothesis of Theorem~\ref{th:CSCK2}. As we have
seen,
$$ H^2(\cM_t,\setR) = H^2_\orb(\cX,\setR) \oplus
\oplus_{x \text{ singular}} (\fh_\setR)_x^{G_x}. $$ Fix a singular point $x$, and a
system $\prs_x\subset(\fh_\setC)_x^*$ of positive roots. If $x$ is a rational double
point ($G_x=1$), a root $\theta\in \prs_x$ represents an integral homology class
on $\cM_t$; in general, $\theta$ represents a homology class in the local
$G_x$-covering, so after projection still represents a (possibly zero)
homology class in $\cM_t$.

Remind that the deformation $(\cM_t,\Omega_t)$ is given near the singular point
$x$ by a map $\zeta_x=(\zeta_r)_x+(\zeta_c)_x:\Delta_b\to(\fh_\setR)_x^{G_x} \oplus (\fh_\setC)_x^{G_x}$,
in which $(\zeta_r)_x(t)$ represents the class of the restriction of $\Omega_t$ near
the singularity. Therefore the homology class defined by $\theta$ is
$\Omega_t$-Lagrangian if $\langle\theta,(\zeta_r)_x(t)\rangle\equiv 0$, which implies in particular $\langle\theta,
(\dot \zeta_r)_x\rangle=0$.
\begin{theointro}\label{th:HSS}
  Suppose that we are under the hypothesis of Theorem~\ref{th:CSCK2} or
  \ref{th:CSCK3}. Fix a singular point $x\in \cX$ and a root $\theta\in \prs_x$, such
  that $\langle\theta,(\dot \zeta_r)_x\rangle=0$, and $\theta$ is primitive for this property (that is
  cannot decompose as $\theta_1+\theta_2$ with $\theta_i\in \prs_x$ satisfying the same
  property). Finally suppose that for all $t>0$, one has $\langle\theta,(\zeta_r)_x(t)\rangle\equiv 0$,
  that is the homology class represented by $\theta$ remains $\Omega_t$-Lagrangian.
  Then:
  \begin{enumerate}
  \item If $x$ is a rational double point, then the homology class in
    $\cM_t$ (or $\cMhat_t$ in the setting of Theorem~\ref{th:CSCK3})
    corresponding to $\theta$ is represented by a smooth Hamiltonian stationary
    sphere $S_t$, which is also a global Lagrangian minimizer of the
    area in its homotopy class.
  \item If $G_x=\setZ_n$, and if the homology class defined by $\theta$ is nonzero,
    then it is represented by a smooth Hamiltonian stationary sphere $S_t$.
  \end{enumerate}
\end{theointro}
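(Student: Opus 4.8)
\emph{Strategy and Step 1: the model sphere in the bubble.} I would produce $S_t$ as a small Hamiltonian perturbation of a minimal Lagrangian sphere transplanted from the ALE `bubble' at $x$, and, in the rational double point case, identify the perturbed sphere with the Schoen--Wolfson minimizer by a concentration argument. From the construction of the CSCK metric in Theorems~\ref{th:CSCK2}--\ref{th:CSCK3}, near $x$ the metric $g_t$ on $\cM_t$ (resp. $\cMhat_t$), after a suitable rescaling, converges in $C^\infty_{\mathrm{loc}}$ to an ALE K\"ahler Ricci-flat space $X$, the rescaled K\"ahler forms converging to the ALE K\"ahler form $\omega_I$, where $I$ denotes the complex structure of the smoothing; by non-degeneracy $X$ is smooth, and is the Kronheimer hyperk\"ahler ALE space attached to $x$ (in case~(2), the relevant $\setZ_n$-quotient of the $A_{dn-1}$ Kronheimer space), with hyperk\"ahler parameter whose real part is $(\dot\zeta_r)_x$ and whose complex part is $(\dot\zeta_c)_x$; thus $[\omega_I]\leftrightarrow(\dot\zeta_r)_x$ and $[\omega_J]+i[\omega_K]\leftrightarrow(\dot\zeta_c)_x$ under the period map. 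On $X$ the class $\theta$ has zero $\omega_I$-period (since $\langle\theta,(\dot\zeta_r)_x\rangle=0$), so it is represented by a Lagrangian surface; its periods against the other two hyperk\"ahler forms are $\Re\langle\theta,(\dot\zeta_c)_x\rangle$ and $\Im\langle\theta,(\dot\zeta_c)_x\rangle$, and $\langle\theta,(\dot\zeta_c)_x\rangle\neq 0$ precisely because Definition~\ref{df:degenerate2} forbids $\dot\zeta_x$ to lie on the wall $\langle\theta,\cdot\rangle=0$. Performing the hyperk\"ahler rotation sending the period direction of $\theta$ (which is orthogonal to the $\omega_I$-direction) to a K\"ahler direction $\omega_{I'}$, the class $\theta$ becomes $I'$-holomorphic, hence is represented by a compact $I'$-holomorphic curve; since the $I'$-effective roots are among those orthogonal to $(\dot\zeta_r)_x$ and $\theta$ is primitive among the latter, this curve cannot be refined, so it is a smooth rational curve $\Sigma_0\subset X$ (in case~(2), its image in the quotient is a smooth sphere, using $[\theta]\ne 0$). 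As an $I'$-holomorphic curve $\Sigma_0$ is calibrated by $\omega_{I'}$, hence $g$-minimal, and since the holomorphic $(2,0)$-form of $I'$ restricts to zero on a curve one gets $\omega_I|_{\Sigma_0}=0$; so $\Sigma_0$ is a minimal, special Lagrangian $S^2$ for $(\omega_I,\Omega_I)$ with constant phase, in particular area-minimizing in its homology class in $X$.

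\emph{Step 2: transplantation and perturbation.} Using the gluing diffeomorphism identifying a rescaled neighbourhood of $x$ in $\cM_t$ with a large compact piece of $X$, transplant $\Sigma_0$ to a sphere $S^0_t\subset\cM_t$; it stays in a fixed compact region, and by the $C^\infty_{\mathrm{loc}}$ convergence it is, in the rescaled metric, Lagrangian and minimal up to an error which is a positive power of $t$. In a Weinstein neighbourhood of $S^0_t$ in $(\cM_t,\omega_t)$, since $H^1(S^2)=0$ every nearby Lagrangian is the graph of some $df$ and is Hamiltonian isotopic to $S^0_t$. Let $\Phi_t(f)$ denote the Hamiltonian-stationary operator (the $g_t$-divergence of the mean-curvature $1$-form), a fourth-order quasilinear elliptic operator, evaluated on the graph of $df$ and viewed as a map of H\"older spaces modulo constants. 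Then $\Phi_t(0)=O(t^{\kappa})$ for some $\kappa>0$, and in the rescaled picture $D\Phi_t(0)$ converges as $t\to 0$ to the linearized Hamiltonian-stationary operator at $\Sigma_0\subset X$; on the Ricci-flat $X$ the mean-curvature $1$-form is $d$ of the Lagrangian angle, whose linearization at the minimal sphere $\Sigma_0$ is $\Delta_{\Sigma_0}$ up to curvature corrections, so the limiting operator has leading part $\Delta_{\Sigma_0}^2$ and kernel reduced to the constants (equivalently, $\Sigma_0$ carries no nontrivial Hamiltonian Jacobi field). Hence $D\Phi_t(0)$ is uniformly invertible modulo constants for small $t$, and the implicit function theorem yields $f_t$ with $\|f_t\|=O(t^{\kappa})$ and $\Phi_t(f_t)=0$. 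Its graph is the desired $S_t$: a smooth Hamiltonian stationary sphere, lying in the class $\theta$ (being $C^0$-close to $S^0_t$) and Hamiltonian isotopic to $S^0_t$. This proves part~(2) and part~(1) except for the minimality claim.

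\emph{Step 3: global minimality for a rational double point.} Here $\theta$ is an integral class, and by Schoen--Wolfson~\cite{SchWol01} its homotopy class contains an area-minimizing Lagrangian $S'_t$, smooth outside finitely many isolated singular points. Its area is $\leq\mathrm{Area}(S^0_t)\to 0$, so by the monotonicity formula a minimizing sequence concentrates at a point; away from the singularities of $\cX$ the metrics $g_t$ are uniformly non-collapsed with bounded geometry, so this point is a singularity, and since the class $\theta$ cannot be carried by a cycle of area $o(\mathrm{Area}(S^0_t))$ outside a fixed neighbourhood of $x$, it is $x$. Rescaling, the sequence converges to an area-minimizing integral representative of $\theta$ in the bubble $X$, which by the calibration of Step~1 is the smooth curve $\Sigma_0$. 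Therefore, for small $t$, $S'_t$ is smooth, has no singular points, is $C^0$-close to $S^0_t$, and being stationary is Hamiltonian stationary; lying in the Weinstein neighbourhood of $S^0_t$, the uniqueness in the implicit function theorem of Step~2 forces $S'_t=S_t$, so $S_t$ is the global Lagrangian area-minimizer in its homotopy class. (In case~(2) this last step is unavailable, as the relevant bubble carries no calibration compatible with the $\setZ_n$-quotient, which is why only Hamiltonian stationarity is asserted there.)

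\emph{Main difficulty.} The crux is the uniform (in $t$) invertibility in Step~2 of the linearized Hamiltonian-stationary operator along a family of metrics degenerating onto the non-compact ALE space while the Lagrangian stays in a fixed core; via the $C^\infty_{\mathrm{loc}}$ convergence furnished by the gluing construction this reduces to the rigidity of a minimal Lagrangian $S^2$ in a Ricci-flat K\"ahler surface (a Hamiltonian Jacobi field would produce a harmonic $1$-form on $S^2$). A secondary point is the concentration-compactness in Step~3, where one must rule out loss of area away from $x$ and the formation of interior singularities of the minimizer.
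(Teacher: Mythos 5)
Your overall strategy coincides with the paper's: (i) identify the model Lagrangian sphere in the tangent graviton by hyperk\"ahler rotation, using the root/wall dichotomy of Lemma~\ref{lem:lag-hol-cycle} and the non-degeneracy condition, (ii) transplant and perturb by an implicit function theorem, and (iii) recover the Schoen--Wolfson minimizer by a concentration--compactness argument anchored to the calibrated model. The one place where your implementation genuinely diverges is the deformation step. You appeal to a Weinstein neighbourhood of the transplanted sphere $S^0_t$ and deform by graphs of $df$, invoking a fourth-order operator whose limit is $\Delta_{\Sigma_0}^2$; but as you yourself note $S^0_t$ is only \emph{approximately} Lagrangian for $\omega_t$, so a Weinstein neighbourhood theorem cannot be applied as stated, and one must first produce a genuine Lagrangian representative. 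The paper sidesteps this by treating the Lagrangian constraint and the stationarity constraint \emph{simultaneously} via the operator $\Psi(f,\xi)=(f^*\xi,\Delta^{-1}\delta(H\lrcorner\xi))$, whose linearization at $(\iota_S,\omega)$ is $\alpha\mapsto(d\alpha,\delta\alpha)$ on all normal $1$-forms $\alpha=n\lrcorner\omega$, and which is a first-order isomorphism $\Omega^1(S^2)\to\Omega^2(S^2)_0\oplus\Omega^0(S^2)_0$ because $S^2$ carries no harmonic $1$-forms; the hypothesis $\Omega_t\cdot\theta=0$ enters precisely to ensure $\int_S f^*\xi=0$, so the target of the first component is correct. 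This gets existence, uniqueness and Lagrangianity in one stroke, while your version requires a preliminary Lagrangian perturbation that you have not supplied. One further small soft spot in your Step~3: when rescaling the Schoen--Wolfson minimizer, you need to rule out that the sequence drifts off to infinity in the ALE bubble; the paper does this by noting $\theta\cdot\theta=-2\neq 0$ so the image must intersect $S_0$ uniformly, whereas you only argue about the location of concentration. Both gaps are patchable without changing your strategy, but they are exactly the points the paper's argument is designed to close cleanly.
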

Moreover (see section \ref{sec:hamilt-stat-spher}), one gets an explicit
description when $t\to0$: after dilations, the Hamiltonian stationary
representative of $\theta$ converges to a special Lagrangian sphere in the
ALE K\"ahler Ricci flat space obtained at the limit.

In the second case ($G_x=\setZ_n$), if the homology class defined by $\theta$ in the
quotient vanishes, one can still get in some cases a Hamiltonian stationary
embedded $S^2$ or $\RP^2$, see section~\ref{sec:hamilt-stat-spher-1}. We do
not state a minimizing property which is less clear, because the models in
the ALE K\"ahler Ricci flat space are not calibrated.

For example, from Theorem~\ref{th:CSCK1} we recover K\"ahler-Einstein metrics
on certain $4$-point blowup of $\CP^2$ constructed by Tian \cite{Tia90},
and Theorem~\ref{th:HSS} provides a construction of stationary Lagrangian
spheres (cf \S\ref{sec:dp}).

\subsection{Organization of the paper}
In most of the article, we suppose that $\cX$ has only canonical
singularities and we prove directly Theorem~\ref{th:CSCK3}, which implies
Theorem~\ref{th:CSCK1}, Theorem~\ref{th:CSCK2}) and Theorem~\ref{th:HSS}. At the
end, we explain the case of general singularities of the class T: there is
not much change, because they are obtained as cyclic quotients of canonical
singularities and all our constructions pass to the quotient.

One of the main point in the paper is the construction of `good' K\"ahler
metrics on the deformations: this is not obvious, because the complex
structure changes. It turns out that what is needed is an extension to the
singular setting of the theorem of Kodaira on the stability of the K\"ahler
condition under complex deformations. This is done in section
\ref{sec:rep}, after the introduction of the `tangent graviton' in section
\ref{sec:defo}. In section \ref{sec:csck-metrics}, we extend arguments in
the literature to produce the CSCK metrics, so we only point out the new
features in our situation. In section \ref{sec:hamilt-stat-spher}, we
produce the Hamiltonian stationary Lagrangian spheres. The case of general
singularities is explained in section \ref{sec:gor}, and some applications
are given in section \ref{sec:appli}.
\subsection{Acknowledgements}
The authors would like to thank Claude Le\,Brun for some stimulating
discussions.

\section{Deformations of singular surfaces and K\"ahler classes}
\label{sec:defo}
In this section, $\cX$ will always denote a compact complex surface with
canonical singularities endowed with an orbifold K\"ahler metric $\gbar$ with
K\"ahler form $\obar$ and K\"ahler class $\Omega_0$.
We consider a flat deformation $\cX\hkto\cM\to\Delta$ of $\cX$.

\subsection{Complex deformations near singularities}
\label{sec:family}

Our first goal is to understand a simultaneous resolution $\cXhat\hookrightarrow
\cMhat\to\Delta_{d}$ of the deformation, focusing near the singular points. For
this purpose, we choose a set of adapted isomorphisms $U_i\simeq \Delta^2_{\epsilon_i}/\Gamma_i$
near each singularity $x_i\in \cX$. Actually, for simplicity of notations
we will suppose that there is only one singular point $x_0$, and we will
point out from place to place what changes for several singular points. So
near $x_0$, one can choose local coordinates $z$ with values in $\Delta^2_\epsilon$
defined modulo $\Gamma$, such that the pullback of $\obar$ on the disc is
expressed as
\begin{equation}
\label{eq:kahlloc}
dd^c (|z|^2+\eta) 
\end{equation}
where $|z|$ is the Euclidean distance to the origin in $\setC^2$ and $\eta$ is a
smooth $\Gamma$-invariant real function such that $\eta=\cO(|z|^4)$.  Up to scaling
the metric, and shrinking the open set $U$, we may assume that $\epsilon=1$.
In the sequel, we shall always assume that the identification $U\simeq
\Delta^2/\Gamma$ is chosen in such a way.

By restricting $\cM$ to a suitable neighborhood of the singular point $x_0$,
we deduce a flat deformation
$$\Delta^2/\Gamma\hkto \cN\to \Delta_c$$
of the singularity $U=\cN_0$.  The singularity $U$ admits a semi-universal
flat family of deformations by a result of Kas-Schlessinger
\cite{KasSch72}: there is a deformation
$$\setC^2/\Gamma\hkto \cY\to \fh_\setC/W, $$
where $\fh_\setC$ is a Cartan subalgebra of the complex semisimple Lie algebra
associated to the Dynkin diagram of the singularity (this is the Lie
algebra associated to the finite group $\Gamma$ by the McKay correspondence),
and $W$ is the Weyl group.  Then, $\cN$ is induced by some holomorphic map
$\psi:\Delta_c\to \fh_\setC/W$. Thus, there is a holomorphic commutative diagram
$$
\xymatrix{
\Delta^2/\Gamma\ar[d]\ar[r]&\setC^2/\Gamma\ar[d]\\
\cN\ar[d]\ar[r]_\Psi & \cY\ar[d]\\
\Delta_c\ar[r]_\psi & \fh_\setC/W
}
$$
such that the restriction $\Psi:\cN_t\to\cY_{\psi(t)}$ is an embedding
for every $t\in\Delta_c$.

A remarkable feature of canonical singularities is that
$\setC^2/\Gamma\hkto\cY\to\fh_\setC/W$ admits a simultaneous resolution
$\widehat{\setC^2/\Gamma} \hookrightarrow\cYhat\to \fh_\setC$ given by a diagram
\begin{equation} \label{eq:2}
\xymatrix{
\widehat{\setC^2/\Gamma}\ar[r]\ar[d]& {\setC^2/\Gamma}\ar[d]\\
\cYhat\ar[r]\ar[d]&\cY\ar[d]\\
\fh_\setC\ar[r]&\fh_\setC/W 
}
\end{equation}
where the map $\fh_\setC\to\fh_\setC/W$ is the canonical projection (Brieskorn, Slodowy).

A priori, one cannot lift the map $\psi:\Delta_c\to\fh_\setC/W$ to $\fh_\setC$. The
obstruction is the monodromy of $\psi$, which lies in $W$. By taking a
ramified cover, with order $\ordre$ equal to the order of the monodromy of $\psi$,
we obtain a lifting
\begin{equation}\label{ordre:revetement}
\xymatrix{
\Delta_d\ar[r]^{\hat\psi}\ar[d]_{r_\ordre} & \fh_\setC\ar[d]\\
\Delta_c\ar[r]^\psi & \fh_\setC/W
}\end{equation}
If there are several singular points, one has to take the order to be
the least common multiple of the orders at each point.

Thus, the family of deformations $\cN'=\Delta_d\times_{r_k}\cN$ admits a simultaneous
resolution $\cNhat\to \cN'$ and we have a commutative holomorphic diagram
\begin{equation}
  \label{eq:commlift}
\xymatrix{
\cYhat\ar[ddd] \ar[rrr]& & & \cY\ar[ddd]\\
&\cNhat\ar[ul]_{\hat\Psi}\ar[r]\ar[d] & \cN\ar[ur]^{\Psi}\ar[d] & \\
& \Delta_d \ar[r]^{r_k}\ar[dl]_{\hat\psi} & \Delta_c\ar[dr]^{\psi} &\\
\fh_\setC \ar[rrr]^\pi &&& \fh_\setC/W
}  
\end{equation}
where the maps $\Psi$ an $\hat\Psi$ restrict to fiberwise embeddings.  Here we
should point out that the maps also commute with the embeddings $\widehat
{\Delta^2/\Gamma}\hkto \cNhat$, $\Delta^2/\Gamma\hkto \cN$, $\widehat {\setC^2/\Gamma}\hkto
\cYhat$, $\setC^2/\Gamma\hkto \cY$ and the canonical inclusion $\Delta^2\hkto \setC^2$.

\begin{rmk}
  The fact that deformations of simple singularities do admit simultaneous
  resolutions after passing to a sufficiently high ramified cover, as
  recalled above, is the essential ingredient used in \cite{KasSch72} to
  construct a simultaneous resolution for deformations of compact complex
  surfaces with canonical singularities.
\end{rmk}

\begin{rmk}
  Conversely, if the simultaneous resolution $\cXhat\hkto\cMhat\to \Delta_d$ is
  already given as a data, the preimage $\cNhat$ of $\cN$ via the map
  $\cMhat\to \cM$ provides a simultaneous resolution of the deformation of
  the singularity for free.  However, by a universal property of
  simultaneous resolutions of canonical singularities~\cite{Hui73}, it
  turns out that $\widehat {\Delta^2/\Gamma}\hkto\cNhat\to \Delta_d$ must be given by the
  above construction.
\end{rmk}

\subsection{Kronheimer's gravitons}
\label{sec:krogra}

The simultaneous resolution $\cYhat\to\fh_\setC$ of the semi-universal family of
deformations $\cY\to\fh_\setC/W$ is explicitly constructed by Kronheimer.  At
this point, we need more details. Kronheimer actually constructs a family
$Y_\zeta$ of hyperK\"ahler manifolds, parameterized by a triple $\zeta=(\zeta_1,\zeta_2,\zeta_3)\in
\fh\otimes\setR^3$. To explain its properties, we choose a positive root system
$\prs\subset\fh^*$, and for a root $\theta\in\prs$ we define a hyperplane of $\fh$ by
$$ D_\theta = \ker \theta . $$
Then the family $(Y_\zeta)$ has the following properties:
\begin{enumerate}
\item $Y_\zeta$ is a smooth manifold if 
  \begin{equation}
\zeta\notin D=\cup_{\theta\in\prs} \setR^3\otimes D_\theta;\label{eq:4}
\end{equation}
\item all $(Y_\zeta)_{\zeta\notin D}$ are diffeomorphic to the minimal resolution
  $\widehat{\setC^2/\Gamma}$ of $\setC^2/\Gamma$; the diffeomorphism
  $$ F_\zeta: \widehat{\setC^2/\Gamma} \overset{\sim}{\longrightarrow} Y_\zeta $$
  can be chosen so that the metric is asymptotic to the Euclidean
  metric: actually there is an asymptotic development at any order
  $$ F_\zeta^* \gmod_\zeta = \geuc + \sum_{i=2}^{k-1} g^i_\zeta R^{-2i} +
  \cO(R^{-2k}), $$ where $g^i_\zeta$ is a homogeneous polynomial in $\zeta$ of
  degree $i$; in the sequel, we will suppose that such a
  diffeomorphism is chosen (this is possible smoothly in the $\zeta$
  parameter because $\fh\otimes\setR^3-D$ is simply connected);
\item $H^2(Y_\zeta,\setR)$ is identified with $\fh$ in such a way that the
  homology classes of the $-2$ curves of the resolution get identified
  with the simple roots of $\fh$ (and so $H_2(Y_\zeta,\setZ)$ is identified with
  the root lattice of $\fh$); under this identification, the cohomology
  classes of the three K\"ahler forms $(\omega_1,\omega_2,\omega_3)$ of $Y_\zeta$ are
  $\zeta_1$, $\zeta_2$ and $\zeta_3$;
\item there is a $SO_3$ action which for $u\in SO_3$ identifies
  isometrically
  $$ Y_\zeta \longrightarrow Y_{u(\zeta)},$$
  permuting $(\omega_1,\omega_2,\omega_3)$ to $u(\omega_1,\omega_2,\omega_3)$;
\item when we want to underline the holomorphic symplectic structure
  $(I_1,\omega_2+i\omega_3)$ of $Y_\zeta$, we use the notation $Y_{\zeta_r,\zeta_c}$, where
  $\zeta_r=\zeta_1$ and $\zeta_c=\zeta_2+i\zeta_3\in \fh_\setC$; then there is a $\setC^*$-action, giving
  for $\lambda\in \setC^*$ an isomorphism of holomorphic symplectic manifolds, which is
  actually also an isometry,
  \begin{equation}
    \label{eq:3}
    H_\lambda : Y_{\zeta_r,\zeta_c} \longrightarrow \frac1{|\lambda|^2}Y_{|\lambda|^2\zeta_r,\lambda^2\zeta_c} ;
  \end{equation}
Here the leading fraction means that the metric has been rescaled by a
factor $\frac 1{|\lambda|^2}$.
\item if $\zeta_r\notin \cup_{\theta\in R_+} D_\theta$, then there is a map $Y_{\zeta_r,\zeta_c} \to
  Y_{0,\zeta_c}$ which is a minimal resolution of singularities, actually
  $Y_{\zeta_r,\zeta_c}$ and $Y_{0,\zeta_c}$ are the fibers $\cYhat_{\zeta_c}$ and
  $\cY_{\zeta_c}$ of the simultaneous resolution (\ref{eq:2}).
\end{enumerate}

\begin{rmk}
  In this paper, the notation $\cO(R^k)$ for a function $f$ on $\setC^2$, or
  more generally a tensor, means that when $R$ goes to $\infty$, then for any
  integer $l=0,1,\ldots$, one has $\nabla^l f=\cO(R^{k-l})$.
\end{rmk}

We now point out the statement which will be central in this paper: it
gives the geometric meaning of the walls $D_\theta$.

\begin{lemma}\label{lem:lag-hol-cycle}
  Suppose $\theta$ is a root and $\zeta\notin D$, so $\theta$ corresponds to some
  homology class in $H_2(Y_\zeta,\setZ)$.

  1) If $\zeta_1\in \ker \theta$, then $\theta$ is a Lagrangian homology class for
  $Y_{\zeta_1,\zeta_c}$.

  2) If $\zeta_c\in \ker \theta$, then $\theta$ is represented by a holomorphic cycle
  in $Y_{\zeta_1,\zeta_c}$; moreover if $\theta$ is primitive for this property
  (that is cannot be written as $\theta_1+\theta_2$, with $\theta_i\in\prs$ and $\zeta_c\in
  \ker \theta_i$), then $\theta$ is represented by a holomorphic sphere.
\end{lemma}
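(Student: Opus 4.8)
The plan is to exploit the $SO_3$-action from property (4) of Kronheimer's construction, which lets us rotate $\zeta$ at will, together with the explicit identification in property (3) of the cohomology classes of the three K\"ahler forms $(\omega_1,\omega_2,\omega_3)$ with $(\zeta_1,\zeta_2,\zeta_3)\in\fh\otimes\RR^3$. Recall that a class $c\in H_2(Y_\zeta,\ZZ)$ is Lagrangian for the symplectic form $\omega_1$ if and only if $\langle c,[\omega_1]\rangle = 0$, and is represented by a holomorphic (i.e.\ complex-analytic) cycle for the complex structure $I_1$ only if its pairing with the holomorphic symplectic form $\omega_2+i\omega_3$ vanishes, i.e.\ $\langle c,[\omega_2]\rangle = \langle c,[\omega_3]\rangle = 0$. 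Under the identification of $H^2(Y_\zeta,\RR)$ with $\fh$ and of $[\omega_j]$ with $\zeta_j$, the pairing $\langle\theta,[\omega_j]\rangle$ becomes $\theta(\zeta_j)$, so the hypotheses $\zeta_1\in\ker\theta$ (resp.\ $\zeta_c\in\ker\theta$) translate precisely into the needed cohomological vanishing.

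For part 1), the strategy is: since $\zeta\notin D$ means $\zeta_j\notin D_\theta$ fails for at least one $j$, and we are given $\zeta_1\in\ker\theta=D_\theta$, we necessarily have $\zeta_c=\zeta_2+i\zeta_3\notin\ker\theta$ (otherwise $\zeta$ would lie in $\RR^3\otimes D_\theta\subset D$). Now apply an element $u\in SO_3$ fixing the first axis; this changes $Y_{\zeta_1,\zeta_c}$ isometrically to $Y_{\zeta_1,u(\zeta_c)}$ and rotates $(\zeta_2,\zeta_3)$ within the plane. We can rotate so that the new $\zeta_2'$ satisfies $\theta(\zeta_2')\neq 0$, i.e.\ $\zeta_2'\notin\ker\theta$. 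Then property (6), applied to the new complex structure $I_2$ for which $\zeta_2'$ plays the role of the K\"ahler class parameter and $\zeta_3'+i\zeta_1$ the holomorphic symplectic parameter, tells us that $\theta$ is represented by a holomorphic cycle for $I_2$ in $Y_{\zeta_2',\zeta_3'+i\zeta_1}$, \emph{provided} $\zeta_3'+i\zeta_1\in\ker\theta$. But after the rotation we can arrange both $\zeta_3'\in\ker\theta$ and, since $\zeta_1\in\ker\theta$ already, the full complex parameter lies in $\ker\theta$. A holomorphic cycle for $I_2$ is in particular isotropic for the real part $\omega_3'$ and imaginary part $\omega_1$ of the holomorphic symplectic form, hence $\langle\theta,[\omega_1]\rangle=0$, which is exactly the Lagrangian condition for $(Y_{\zeta_1,\zeta_c},\omega_1)$. (One should double-check that a complex curve is always isotropic for any real form of type $(2,0)+(0,2)$, which is immediate since such a form pulls back to zero on any complex submanifold.)

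For part 2), the hypothesis $\zeta_c\in\ker\theta$ gives $\theta(\zeta_2)=\theta(\zeta_3)=0$, and since $\zeta\notin D$ we must have $\zeta_1=\zeta_r\notin\ker\theta$, i.e.\ $\zeta_r\notin\cup_{\theta'\in\prs}D_{\theta'}$ at least at $\theta$ — but for the resolution statement in property (6) we genuinely need $\zeta_r$ outside \emph{all} the walls $D_{\theta'}$, which is not automatic. So the correct route is: pick a generic direction and use the $SO_3$-action to rotate $(\zeta_1,\zeta_2,\zeta_3)$ to $(\zeta_1',\zeta_2',\zeta_3')$ with $\zeta_1'$ regular (outside every wall) while keeping $\zeta_c'=\zeta_2'+i\zeta_3'$ in $\ker\theta$ — this is possible because $\ker\theta$ has real codimension $1$ in $\fh$ so a whole $SO_2\subset SO_3$ worth of rotations preserves the condition $\zeta_c\in\ker\theta$ while moving $\zeta_1$, and regular elements are dense. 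Then property (6) identifies $Y_{\zeta_1',\zeta_c'}$ with the fiber $\cYhat_{\zeta_c'}$ of the simultaneous resolution mapping to the singular fiber $\cY_{\zeta_c'}=Y_{0,\zeta_c'}$; since $\zeta_c'\in\ker\theta$, the class $\theta$ is contracted by this resolution map, hence $\theta$ is represented by a holomorphic cycle (a sum of exceptional curves over the singular locus). For primitivity: if $\theta$ cannot be decomposed as $\theta_1+\theta_2$ with $\theta_i\in\prs$ and $\zeta_c\in\ker\theta_i$, then the sub-root-system $\{\theta'\in\prs:\zeta_c\in\ker\theta'\}$ has $\theta$ as an indecomposable positive element, so $\theta$ corresponds to a single exceptional $(-2)$-curve in the partial resolution of the rational double point $\cY_{\zeta_c'}$, i.e.\ a holomorphic $\PP^1$.

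The main obstacle I expect is the careful bookkeeping of which $SO_3$-rotation to apply and verifying that all the constraints (keeping one of $\zeta_1,\zeta_c$ in $\ker\theta$ while moving the other out of the relevant walls) can be met simultaneously; this is a dimension count in $\fh\otimes\RR^3$ against a finite union of codimension-$1$ subspaces crossed with $\RR^3$, so it is elementary but needs to be stated cleanly. The other subtlety, essentially cosmetic, is being precise about the distinction between "Lagrangian homology class" (the pairing with $[\omega_1]$ vanishes) and "represented by a Lagrangian submanifold" — here only the cohomological statement is claimed, so it follows directly from the identifications, and the genuine Lagrangian \emph{representatives} are produced later in section~\ref{sec:hamilt-stat-spher}.
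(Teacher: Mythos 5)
Your opening paragraph gets part~1 right, and for the right reason: under property~(iii) the class $[\omega_1]$ is $\zeta_1$, so $\zeta_1\in\ker\theta$ \emph{is} the statement $\langle\theta,[\omega_1]\rangle=0$, which is exactly what ``Lagrangian homology class'' means. The subsequent ``For part~1)'' paragraph with the $SO_3$ rotation and the appeal to property~(vi) is a detour that derives the hypothesis from the hypothesis and adds nothing; moreover it implicitly uses property~(vi) with $\zeta_2'$ in the role of $\zeta_r$, where you have arranged $\zeta_2'\notin\ker\theta$ but not $\zeta_2'\notin D_{\theta'}$ for every root $\theta'$, which is what property~(vi) actually demands. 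The paper simply says ``the first part is obvious from~(iii),'' which is the whole content.

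For part~2 there is a genuine gap in your argument, and it is precisely at the step you flagged as the ``main obstacle.'' You correctly observe that $\zeta\notin D$ together with $\zeta_c\in\ker\theta$ only forces $\zeta_1\notin\ker\theta$, not $\zeta_1\notin\cup_{\theta'}D_{\theta'}$, so the hypothesis of property~(vi) is not verified. But the proposed $SO_3$ fix cannot succeed: if $u\in SO_3$ is to preserve the condition $\zeta_c\in\ker\theta$, then from $\theta(\zeta_j')=u_{j1}\theta(\zeta_1)$ for $j=2,3$ and $\theta(\zeta_1)\neq 0$ one gets $u_{21}=u_{31}=0$, hence $u_{11}=\pm1$ and then $u_{12}=u_{13}=0$ by orthogonality, so $\zeta_1'=\pm\zeta_1$. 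The $SO_2$ subgroup that preserves $\zeta_c\in\ker\theta$ is exactly the stabilizer of the $e_1$-axis and it does not move $\zeta_1$ at all; there is no dimension to spare, contrary to what your dimension-count paragraph suggests. The paper's proof bypasses this by directly invoking Brieskorn--Slodowy: the simultaneous resolution $\cYhat\to\fh_\CC$ and the Torelli-type identification of $Y_{\zeta_r,\zeta_c}$ with the fiber $\cYhat_{\zeta_c}$ hold for every $\zeta\notin D$ (this is what the parenthetical in property~(vi) is recording, even though the hypothesis is stated more restrictively), so one simply reads off the exceptional set of $\cYhat_{\zeta_c}\to\cY_{\zeta_c}$; the primitivity argument, which you state correctly, then produces the single $\PP^1$ when $\theta$ is indecomposable in the sub-root-system $\{\theta'\in\prs:\zeta_c\in\ker\theta'\}$. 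To repair your write-up you should replace the $SO_3$-rotation step by a direct appeal to the simultaneous resolution over the walls, as the paper does.
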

The condition $\zeta\notin D_\theta$ can now be understood in terms of this lemma:
indeed, if $\zeta\in D_\theta$, then both $\zeta_r,\zeta_c\in \ker \theta$ which means that $\theta$
would represent at the same time a Lagrangian class and a holomorphic
cycle, which is impossible, so $Y_{\zeta_r,\zeta_c}$ has to be singular.

The first part of the lemma is obvious from (iii). The second part is
basically contained in the work of Brieskorn and Slodowy on Kleinian
singularities. By property (vi), the holomorphic map
$Y_{\zeta_r,\zeta_c}\to Y_{0,\zeta_c}=\cY_{\zeta_c}$ is a minimal resolution of
singularities, but the semi-universal deformation $\cY$ is explicit
and its singularities are completely understood: the discriminant
locus $\cD\subset\setC^k=\fh/W$, that is the set of $v\in \setC^k$ such that $\cY_v$
is singular, is exactly the branch locus of the projection
$\fh_\setC\to\fh_\setC/W$, i.e. the projection of the kernels of the roots.
(And the monodromy representation $\pi_1(\setC^k-\cD)\to\mathrm{Aut}\,H^2(\cY_{v_0},
\setC)$ is the standard representation of $W$ on $\fh_\setC$).

If $\zeta_c\in \ker \theta$ for only one root $\theta$ (this is the generic case),
then $\cY_{\zeta_c}$ has a singular point with a singularity of type
$\setC^2/\setZ_2$, giving a $-2$ holomorphic sphere in the minimal resolution
$Y_{\zeta_r,\zeta_c}$, in the homology class corresponding to $\theta$. For a
general $\zeta_c\in \ker \theta$, then of course $\theta$ is still represented by a
holomorphic cycle in $Y_{\zeta_r,\zeta_c}$, which might be a union of several
curves if it can be decomposed as a sum of roots $\theta=\theta_1+\cdots +\theta_\ell$ such
that $\zeta_c\in \ker \theta_i$.

\begin{rmk}
  The $A_k$-gravitational instantons are known explicitly
  (multi-Eguchi-Hanson metrics given by the Gibbons-Hawking
  ansatz). In that case one can see explicitly the holomorphic cycles
  of Lemma~\ref{lem:lag-hol-cycle}.
\end{rmk}

\subsection{The tangent graviton}
\label{sec:induced}
We come back to our setting of a flat deformation
$\cX\hkto\cM\to\Delta$ of a K\"ahler orbifold
surface $\cX$ with  a simultaneous
resolution $\cXhat\hkto\cMhat\to\Delta_d$ after passing to a ramified
cover. We deduce a family of deformations of the singularity
$\Delta^2/\Gamma\hkto\cN\to \Delta_c$ and a simultaneous resolution
$\widehat{\Delta^2/\Gamma}\hkto\cNhat\to \Delta_d$.
As explained in \S\ref{sec:family} we have a morphism
$$\xymatrix{
\cNhat\ar[d]\ar[r]^{\hat\Psi}& \cYhat\ar[d]\\
\Delta_d\ar[r]^{\hat\psi}&\fh_\setC
}
$$
The cohomology class $\Omega_t$ defined for $t\in \Delta_d\cap \RR^+$, restricted to $\cNhat_t$ defines  a
class on  $\cYhat_{\hat \psi(t)}$ identified to an element 
 $\zeta_r(t)\in \fh$. So that the whole data
$(\cYhat_{\hat \psi(t)},\zeta_r(t))$ is exactly that of the Kronheimer
graviton $Y_{\zeta_r(t),\zeta_c(t)}$ with the definition
$\zeta_c(t)=\hat \psi(t)$. 
In
view of Lemma~\ref{lem:lag-hol-cycle}, it is clear that the parameters
$(\zeta_r(t),\zeta_c(t))$ satisfy condition (\ref{eq:4}), so
$Y_{\zeta_r(t),\zeta_c(t)}$ is smooth.

The fact that $\Omega_t$ converges to an orbifold K\"ahler class $\Omega_0$ means that
$\zeta_r(t)$ converges to $0$, and we suppose that it depends smoothly on
the parameter $t$, including at $t=0$.

We assume that the map
\begin{align*}
\zeta:  \Delta_d\cap \RR^+ &\to \fh \oplus \fh_\setC\\
t& \mapsto (\zeta_r(t),\zeta_c(t))
\end{align*}
does not vanish at infinite order at $t=0$ and introduce the first
nonzero derivative $\dot\zeta$ for some order $p>0$.
This means that
\begin{equation}
  \label{eq:assnonvanish}
   \zeta(t)= t^p\dot\zeta+ O(t^{p+1}) 
\end{equation}
for some $p>0$ and $\dot\zeta\neq 0$.

The domain $\cNhat_t$ identifies to a small domain of $Y_{\zeta(t)}$. Zooming
by a factor $\epsilon^{-1}=t^{-p/2}$, and multiplying the K\"ahler class by a factor
$\epsilon^{-2}=t^{-p}$, we obtain by (\ref{eq:3}) that this domain is identified
via $H_{t^{-p/2}}$ with a larger and larger domain in $Y_{\epsilon^{-2}\zeta(t)}$,
which converges to $Y_{\dot \zeta}$ on compact subsets.

This discussion motivates:
\begin{dfn}
\label{dfn:tangra}
The Kronheimer space $Y_{\dot \zeta}$ is called the tangent graviton
to the deformation $(\cNhat,\Omega_t|_{\cNhat_t})$.
\end{dfn}

\begin{rmk}
  There are choices in the construction of $\dot \zeta$:
  \begin{itemize}
  \item the choice of lifting to the simultaneous resolution is done up to
    the action of the Weyl group $W$, acting on the parameter $\zeta_c(t)$: but
    this does not change the space $Y_{\zeta_r(t),\zeta_c(t)}$;
  \item the choice of a coordinate in the disc: since we have chosen a real
    ray, the ambiguity is the rescaling by a real $\lambda>0$, but in view of
    (\ref{eq:3}), this amounts to rescale the graviton.
  \end{itemize}
\end{rmk}

\begin{rmk}
  If the restriction of $\Omega_t$ to $\cNhat_t$ is identically zero, then
  $\zeta_r(t)\equiv 0$. In this case, as $\zeta$ is a holomorphic map and the nonzero
  derivative $\dot\zeta$ can be defined without restricting to a particular ray
  in $\Delta_d$. Thus, all the above definition make sense if we allow $t\in
  \Delta_d$. This property is of special interest in the case of polarized
  smoothings.
\end{rmk}

\section{Representing  K\"ahler classes} 
\label{sec:rep}
The one point blowup of a complex manifold endowed with a K\"ahler metric
$\omega$, carries a family of K\"ahler metric $\omega_\epsilon$. This is a very nice argument
due to Kodaira in which the metric $\omega_\epsilon$ can be constructed almost
explicitly. The construction of $\omega_\epsilon$ is done by cut and paste, where the
Burns-Simanca metric defined on a neighborhood of the exceptional divisor
is glued with the original metric. As $\epsilon \to 0$, the metrics $\omega_\epsilon$ converge
smoothly away from the exceptional divisor toward the original metric~$\omega$.

The aim of the current section is to prove a similar result for families of
deformations of a K\"ahler orbifold surface with isolated singularities.  In
particular, we shall prove the following proposition.
\begin{prop}
\label{prop:kahlmet}
Let $\cX\hkto\cM\to \Delta$ be a family of deformations of a compact complex
surface with canonical singularities.  Let $\ombar$ be an orbifold K\"ahler
metric on $\cX$ with K\"ahler class $\Omega_0$ and $\Omega$ a family of K\"ahler
classes supported by a simultaneous resolution $\cXhat\to\cMhat\to \Delta_d$
degenerating toward $\Omega_0$, such that the variation of K\"ahler class and
complex structure is non degenerate in the sense of 
Definition~\ref{df:degenerate2}.

Then, there exists a family of K\"ahler metrics $g_t$ with K\"ahler forms $\omega_t$
on $\cMhat_t$ defined for $t\in \Delta_d\cap (0,+\infty)$ and a smooth trivialization
$\phi:\Delta_d\times \cXhat \to \cMhat$ identifying all the fibers $\cM_t$ with the
property that
\begin{itemize}
\item $[\omega_t]=\Omega_t$;
\item the family of metrics $g_t$ converges in the $C^2$-sense toward the
  orbifold metric $\gbar$ on every compact set of $X=\cXhat\setminus E$, where $E$
  is the exceptional divisor of $\cXhat \to \cX$.
\end{itemize}
\end{prop}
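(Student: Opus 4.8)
The plan is to establish Proposition~\ref{prop:kahlmet} by a cut-and-paste construction in the spirit of Kodaira's argument for the blow-up, in which the role of the Burns--Simanca metric near the exceptional divisor is played by a rescaled copy of the tangent graviton $Y_{\dot\zeta}$. First I would fix a smooth trivialization $\phi\colon\Delta_d\times\cXhat\to\cMhat$: away from a fixed ball $B(x_0,\delta_0)$ the projection $\cMhat\to\Delta_d$ is a proper $C^\infty$-submersion, so Ehresmann's theorem produces a trivialization there; near $x_0$ the fibre $\cNhat_t$ is a domain of the Kronheimer space $Y_{\zeta_r(t),\zeta_c(t)}$, and the diffeomorphisms $F_{\zeta(t)}\colon\widehat{\setC^2/\Gamma}\to Y_{\zeta(t)}$ of property~(ii), which depend smoothly on $\zeta$, trivialize a neighbourhood of the exceptional divisor $E$ smoothly in $t$; patching the two over the annulus $\{|z|\simeq\delta_0\}$ gives $\phi$. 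Transporting everything to the fixed manifold $\cXhat$ via $\phi$, the complex structures $J_t$ converge to $J_0$ in $C^\infty_{\mathrm{loc}}$ and, near $E$, are exactly the pull-backs under $F_{\zeta(t)}$ of the $I_1$-structure on $Y_{\zeta(t)}$. The non-degeneracy hypothesis (Definition~\ref{df:degenerate2}) enters here: by Lemma~\ref{lem:lag-hol-cycle} and (\ref{eq:4}) it guarantees that $Y_{\dot\zeta}$ is smooth, so that, by (\ref{eq:3}), the rescaled fibres $H_{t^{-p/2}}(\cNhat_t)$ converge on compact sets to the smooth tangent graviton $Y_{\dot\zeta}$ and the bubbles $Y_{\zeta(t)}$ have $t$-uniformly controlled geometry, with $|\zeta(t)|\asymp t^p$ by (\ref{eq:assnonvanish}).

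Next I would build an approximate K\"ahler form. Write $\epsilon=t^{p/2}$ (the bubble scale) and fix a gluing radius $\rho(t)=\epsilon^\sigma$ with $0<\sigma<1$. On $\{|z|\le\rho(t)\}$ set $\omega^{\mathrm{in}}_t=F_{\zeta(t)}^*\omega_1$, the first K\"ahler form of the hyperk\"ahler metric $\gmod_{\zeta(t)}$ on $Y_{\zeta(t)}$, which is K\"ahler for $J_t$ by construction. On the fixed annulus $\{2\rho(t)\le|z|\le\delta_0\}$ set $dd^c_{J_t}(|z|^2+\eta)$; since $\bar\omega=dd^c_{J_0}(|z|^2+\eta)$ is uniformly positive and $J_t-J_0=\cO(\epsilon^2|z|^{-2})$ there (as one reads off from the Brieskorn--Slodowy equations of $\cY_{\zeta_c}\subset\setC^3$), this is K\"ahler for $J_t$ once $t$ is small. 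On the neck $\{\rho(t)\le|z|\le2\rho(t)\}$ the end of $Y_{\zeta(t)}$ is Euclidean of order $4$ by property~(ii), so $\omega^{\mathrm{in}}_t=dd^c_{J_t}u_{\zeta(t)}$ there with $u_{\zeta(t)}=|z|^2+\cO(\epsilon^4|z|^{-2})$, whereas $\eta=\cO(|z|^4)$; one may therefore glue by interpolating the potentials, $dd^c_{J_t}\big((1-\chi)u_{\zeta(t)}+\chi(|z|^2+\eta)\big)$, and since a cut-off supported where $|z|\simeq\rho(t)$ costs at most $|z|^{-2}\simeq\epsilon^{-2\sigma}$ in two derivatives, the gluing error is $\cO(\epsilon^{4-4\sigma}+\epsilon^{2\sigma})$ in every $C^k$, so the resulting closed $(1,1)_{J_t}$-form is positive for $t$ small. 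Finally, on $\cX\setminus B(x_0,\delta_0)$ take a closed $2$-form $\sigma_t$, equal to $dd^c_{J_t}(|z|^2+\eta)$ near $\{|z|=\delta_0\}$, converging to $\bar\omega$ in $C^\infty$, and whose contribution to $H^2$ is the $H^2_{\orb}(\cX,\setR)$-component $c_t$ of $\Omega_t$; this is possible because $c_t=\Omega_t-\zeta_r(t)$ converges to $\Omega_0=[\bar\omega]$ (recall $\zeta_r(t)\to0$), so $\sigma_t$ may be taken as $\bar\omega$ plus a $C^\infty$-small closed form supported away from $x_0$, adjusted by an exact form in a fixed collar to match the inner piece. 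Patching yields a global closed positive $2$-form $\omega^{\mathrm{app}}_t$.

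By property~(iii) the class of $\omega^{\mathrm{in}}_t$, localized near $E$, is $\zeta_1(t)=\zeta_r(t)$, which by the very definition of $\zeta_r$ in \S\ref{sec:induced} is the restriction of $\Omega_t$; hence $[\omega^{\mathrm{app}}_t]=c_t+\zeta_r(t)=\Omega_t$, and reading off the construction, $\omega^{\mathrm{app}}_t\to\bar\omega$ in $C^\infty_{\mathrm{loc}}$ on $\cXhat\setminus E$. It remains to correct $\omega^{\mathrm{app}}_t$ to a genuine K\"ahler form. By construction $\omega^{\mathrm{app}}_t$ is of type $(1,1)_{J_t}$ on $\{|z|<\delta_0\}$ (it is $dd^c_{J_t}$ of a potential there, and the Kronheimer form closer in), so its component $\beta_t$ of type $(2,0)_{J_t}+(0,2)_{J_t}$ is supported in a fixed compact subset $N\subset\cX\setminus\{x_0\}$; since $J_t\to J_0$ on $N$ and $\bar\omega$ is $(1,1)_{J_0}$, one has $\|\beta_t\|_{C^k}\to0$. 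As $\Omega_t$ is a K\"ahler, hence $(1,1)$, class, $(\cMhat_t,J_t)$ is a compact K\"ahler surface and the $\del\delb$-lemma holds on it; comparing $\omega^{\mathrm{app}}_t$ with the $(1,1)_{J_t}$ harmonic representative of $\Omega_t$ shows that the $(2,0)_{J_t}$-part of $\beta_t$ is $\del_t$-exact (the $\del_t,\delb_t$ being the Dolbeault operators of $J_t$). One then solves $\del_t\gamma_t=\beta_t^{2,0}$ with $\gamma_t$ small and concentrated near $N$, and sets $\omega_t=\omega^{\mathrm{app}}_t-d(\gamma_t+\overline{\gamma_t})$: this is closed, of type $(1,1)_{J_t}$, lies in $\Omega_t$, agrees with $\omega^{\mathrm{app}}_t$ near $E$, is positive for small $t$, and, the correction tending to $0$ in $C^\infty_{\mathrm{loc}}$ away from $E$, still satisfies $\omega_t\to\bar\omega$ in $C^2$ on compacts of $\cXhat\setminus E$.

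The main obstacle is the uniformity in $t$ of this last correction. Because a bubble of size $\epsilon$ is forming, the natural elliptic operators on $(\cMhat_t,J_t)$ degenerate: the Laplacian on $(2,0)_{J_t}$-forms acquires an eigenvalue of order $\epsilon^4$, carried by a cut-off of the holomorphic volume form of the bubble $Y_{\dot\zeta}$, so it cannot simply be inverted with uniform bounds. The remedy is to keep the correction away from the neck — solving for $\gamma_t$ on the fixed manifold-with-boundary $\cX\setminus B(x_0,\delta_0)$, whose complex geometry is $t$-independent up to a $C^\infty$-small error, and absorbing the boundary term into the inner construction — or, when the topology of $\cX$ forces the correction to leak into the bubble region, to run the $\del\delb$-estimates in weighted H\"older norms adapted to the conical neck, whose uniformity relies precisely on the fixed model $Y_{\dot\zeta}$ and its Euclidean-of-order-$4$ asymptotics from property~(ii). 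The remaining points are routine: several singular points are handled by performing the above at each $x_i$ with its own scale $\epsilon_i=t^{p_i/2}$ after passing to the common cover of order $\ordre$, and one checks throughout that the glued objects respect the fibrewise embeddings of diagram~(\ref{eq:commlift}).
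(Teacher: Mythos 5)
Your gluing step differs genuinely from the paper's. The paper glues \emph{Hermitian metrics}: $h_t$ is $J_t$-Hermitian by construction, so its fundamental form $\varpi_t$ is positive and of type $(1,1)_{J_t}$ but not closed; the proof then projects onto $\Box_t$-harmonic forms (Proposition~\ref{prop:hermharm}), restores $d$-closedness via Fr\"olicher degeneration (Lemma~\ref{lemma:closedgamma}), and finally passes to the $g'_t$-harmonic representative of $\Omega_t$ because the glued class $[\omega'_t]$ is only $\cO(\epsilon^2)$-close to $\Omega_t$ (Lemma~\ref{lemma:estikahlclass}, Proposition~\ref{prop:omegap}). You glue \emph{potentials} instead, so $\omega^{\mathrm{app}}_t$ is closed and lies exactly in $\Omega_t$ from the start, at the cost of a $(2,0)_{J_t}+(0,2)_{J_t}$ defect supported on the compact part, which you remove by solving $\del_t\gamma_t=\beta_t^{2,0}$ and subtracting $d(\gamma_t+\overline{\gamma_t})$. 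The two attacks are dual, and yours has the small advantage of never renormalizing the cohomology class.

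The gap is the uniform estimate on $\gamma_t$, which you name but do not prove; the remedies you list --- solving on $\cX\setminus B(x_0,\delta_0)$ and absorbing the boundary term, or running $\del\delb$-estimates in neck-adapted weighted norms --- are the names of the work, not the work. The solution operator of $\del_t$ does not localize, and no $C^\infty_{\mathrm{loc}}$ argument controls $\gamma_t$ near the bubble uniformly in $t$: once the $(1,1)$-piece $\delb_t\gamma_t+\del_t\overline{\gamma_t}$ leaks into the bubble region, where the K\"ahler form itself has size $\sim\epsilon^2$, positivity can be destroyed unless one has a bound of the form $\|\gamma_t\|_{C^{k+1,\alpha}_{\delta+1}}\lesssim\|\beta_t^{2,0}\|_{C^{k,\alpha}_\delta}$ with a $t$-independent constant in the weighted norms of \S\ref{sec:holder}. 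Establishing precisely this is what Proposition~\ref{prop:linearapprox} for $1$-forms, together with Lemmas~\ref{lemma:iroot} and~\ref{lemma:estipsi}, accomplish, and there is no shortcut around them: that apparatus is most of Section~\ref{sec:rep}. Your diagnosis of the source of degeneracy is also off: you attribute it to a small eigenvalue of $\Box_t$ on $(2,0)_{J_t}$-forms carried by a cut-off of the bubble's holomorphic volume form, but the paper proves $\cH^{2,0}_{A,t}=\cH^{0,2}_{A,t}=0$ and $\cH^{1,0}_{A,t}=\cH^{0,1}_{A,t}=0$ on the tangent graviton in the relevant weighted spaces, so the ALE model contributes no approximate kernel at all in the degrees your correction uses; the genuine subtlety lies in the indicial-root analysis on the neck and the commensurability of the weighted norms under the rescaling $H_{\epsilon^{-1}}$.
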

\begin{rmk}
  The above proposition also holds if $\Omega$ is only assumed to be a family of
  $(1,1)$-classes (instead of K\"ahler). As a corollary, under the
  assumptions of admissibility, $\Omega_t$ must be a K\"ahler class for $t\in(0,+\infty)$
  sufficiently small.
\end{rmk}

The rest of the section will be devoted to prove the proposition as well as
giving more accurate results about the behavior of $\omega_t$ as $t\to 0$.

\subsection{Summary of the setup}\label{sec:summary}
We use the notations introduced at \S\ref{sec:defo}. As in
\S\ref{sec:family}, we shall assume to keep the notations simple that there
is exactly one singularity in $\cX$ with a neighborhood $U$ identified to
$\Delta^2/\Gamma$. The minimal resolution of the singularity will be denoted
$\Uhat\to U$ and $\cXhat\to\cX$.  The case when several singularities occur
is a straightforward generalization of the constructions explained below.

The smooth manifold or orbifold deduced from $\cXhat$ and $\cX$ will be
denoted $\Xhat$ and $\Xbar$, and $X$ will denote the complement of the
singularity in $\Xbar$, or equivalently, the complement of the exceptional
divisor in $\Xhat$.

Using a smooth trivialization $\phi:\Delta_d\times \Xhat\to \cMhat$ of the simultaneous
resolution $\cXhat\hkto\cMhat\to\Delta_d$, we have a family of complex
structures $J_t$ defined on $\Xhat$ deduced from $\cMhat_t$ and $\phi$. The
degenerating family of K\"ahler classes $\Omega_t$ on $(\Xhat,J_t)$ also provides
a cohomology class $\Omega_t|_{\Uhat}\in H^2(\Uhat,\RR)\simeq
H^2(\widehat{\setC^2/\Gamma},\RR)$. As explained in \S\ref{sec:induced}, this is
the K\"ahler class of a K\"ahler Ricci-flat metric $\gmod_{\zeta(t)}$ compatible
with the complex structure $\Imod_{\zeta(t)}$ on $\widehat{\setC^2/\Gamma}$ provided by
Kronheimer's construction.

If the variation of the deformation is non degenerate, there is a scaling
parameter $\epsilon=t^{p/2}$ defined for some $p\geq 1$, and homotheties $H_{\epsilon^{-1}}$
by a factor $t^{-p/2}=\epsilon^{-1}$ such that we have the following properties:
$H_{\epsilon^{-1}}$ induces a map $\Uhat \to \widehat{\Delta^2_{\epsilon^{-1}}/\Gamma}$ such that the
image of the K\"ahler structure $(\Imod_{\zeta(t)}, \gmod_{\zeta(t)})$ is
$(\Imod_{\epsilon^{-2}\zeta(t)}, \epsilon^2\gmod_{\epsilon^{-2}\zeta(t)})$.  Up to rescaling the metric
by a factor $\epsilon^{-2}$, the family of K\"ahler structure converges to to the
tangent graviton graviton $Y_{\dot\zeta}$ which is smooth (not in Kronheimer's
wall).  Thus by \ref{sec:krogra} (ii), we have uniform estimates in $t$ as
$R\to +\infty$ of the form
$$
\gmod_{\epsilon^{-2}\zeta(t)} = \geuc + \xi_t \quad \mbox {with } \xi_t=\cO(R^{-4})
$$
where $\geuc$ is the Euclidean metric on $\setC^2/\Gamma$, and $R$ is the Euclidean
distance to the origin. The function $R$ is related to $r$ by the scaling
factor $r=\epsilon R$. Similar estimates hold for the complex structures
$$
\Imod_{\epsilon^{-2}\zeta(t)} = \Ieuc + \cO(R^{-4}),
$$
where $\Ieuc$ is the canonical complex structure of $\setC^2/\Gamma$.

\subsection{Background Hermitian metrics}
\label{sec:bg}
Let $\gbar$ be the orbifold K\"ahler metric on $(\Xbar,J_0)$ with K\"ahler form
$\ombar$. Recall that the isomorphism between $U$ and $\Delta^2/\Gamma$ is chosen so
that the K\"ahler form can be written as \eqref{eq:kahlloc}.  We modify
$\gbar$ so that it is flat near the the singularity. For this purpose, we
need to choose a gluing scale of the form
$$
b=\epsilon^\beta= t^{p\beta/2}
$$
where $\beta$ is a constant very close to $\frac 12$ and such that $1>\beta>\frac
12$. The precise value of this constant will be decided later on
(cf. \eqref{eq:fixbeta}).

We also need to define suitable cut-off functions.  We fix a standard bump
function $\chi:\RR\to\RR$, that is a smooth non decreasing function such that
$\chi(x)=0$ for $x\leq 0$ and $\chi(x)=1$ for $x\geq 1$. Then we choose a pair of real
parameters $(x_1,x_2)$ such that $0<x_1<x_2$ and define
\begin{equation}
  \label{eq:cutoff}
  \chi_{(x_1,x_2)}(x)=\chi\left (\frac{x-x_1}{x_2-x_1}\right)
\end{equation}

Let $r$ be the function on $U$ corresponding to the Euclidean distance
to the origin via the isomorphism $U\simeq \Delta^2/\Gamma$. 
 Then we define
\begin {align*}
\omega_{B,t} & =\ombar  \quad \mbox{
  away from the domain $r\leq 4 b$ of $U$ and} \\
\omega_{B,t} &=dd^c_{J_0}(r^2 + \chi_{(2b,4b)} (r) \eta) \quad \mbox{
  on the domain $r\leq 4b$ of $U$} 
\end{align*}
so that $\omega_{B,t}$ is the K\"ahler form of a K\"ahler orbifold metric on
$(\Xbar,J_0)$ for $t$ small enough.

Similarly, we modify the model metric $\gmod_{\epsilon^{-2}\eta(t)}$ on
$\widehat{\setC^2/\Gamma}$ as follows:
\begin{align*}
g_{A,t}&=\gmod_{\epsilon^{-2}\zeta(t)} \quad \mbox{on the domain
  $R\leq  b/\epsilon$}
\\
g_{A,t}&=\geuc \quad \mbox{on the domain $2  b/\epsilon \leq R$,}
\\
g_{A,t} &= \geuc  + (1-\chi_{( b/\epsilon ,2 b/ \epsilon )}(R)) \xi_t \quad \mbox{on the
  annulus $ b/\epsilon\leq R\leq 2 b/\epsilon$.}
\end{align*}
Hence $g_{A,t}$ defines a Riemannian metric on $\widehat {\setC^2/\Gamma}$ for $t$
small enough.

The homothety $H_{\epsilon^{-1}}$ identifies the annuli $b\leq r\leq 4 b$ and $ b/\epsilon \leq R\leq
4 b/ \epsilon$.  By construction $g_{B,t}$ is Euclidean at $ r= 2 b$ and so is
$g_{A,t}$ near $R=2 b/\epsilon$. Identifying the annuli via $H_{\epsilon^{-1}}$, we
define a Riemannian metric on $\Xhat$ by
\begin{align*}
  \tilde h_t &= \epsilon^2 H_{\epsilon^{-1}}^*g_{A,t} \quad \mbox { on the domain
  $r\leq 2 b$ of $\Uhat$} \\
 &= g_{B,t} \quad \mbox { outside the domain
  $r\leq 2 b$ of $\Uhat$}
\end{align*}

By definition, the metric $\tilde h_t$ is $J_t$-Hermitian on the
domain 
$r \leq b$ of $\Uhat$ and $J_0$-Hermitian on the complement of $r\leq 2b$. 
We construct a globally $J_t$-Hermitian metric on $\Xhat$ by
introducing its projection
\begin{equation}
  \label{eq:hversion}
h_t(u,v) = \frac 12 (\tilde h_t(u,v)+ \tilde h_t(J_tu,J_tv))  .
\end{equation}
Similarly, one can construct $\Imod_{\epsilon^{-2}\zeta(t)}$-Hermitian metrics
$h_{A,t}$ on $\widehat{\setC^2/\Gamma}$ deduced from $g_{A,t}$.

\subsection{H\"older spaces}
\label{sec:holder}
Elliptic operators, Laplacians for instance, may not be Fredholm on a
singular or noncompact
manifold. At this point, we ought to introduce suitable weighted
H\"older spaces to deal with this issue.

\subsubsection*{H\"older spaces for ALE spaces}  We
consider a \emph{radius} function $\rho_A$ on $\widehat{\setC^2/\Gamma}$. That is
a smooth function such that $\rho_A>0$ with the property that $\rho_A= R$,
say on the domain $R\geq 2$.  For $\delta\in \RR$, we define the weighted
H\"older $C^{k,\alpha}_\delta$-norm given by
$$ \|f\|_{C^{k,\alpha}_\delta(\widehat{\setC^2/\Gamma},t)}
 = \sum_{j=0}^k\sup |\rho_A^{j-\delta}\nabla^j f| + \| \rho_A^{j-\delta-\alpha}\nabla^kf \|_\alpha
$$ 
where ($i_0>0$ being fixed and chosen smaller than the injectivity radius)
$$ \| f \|_\alpha = \sup_{d(x,y)\leq i_0} \frac{ |f(x)-f(y)| }{d(x,y)^\alpha}. $$
Here, $f$ can be any tensor, and the pointwise norms are taken with
respect to the metric $\gmod_{\epsilon^{-2}\zeta(t)}$ (or alternatively $g_{A,t}$). 

\begin{rmk}
  All the norms on $\widehat{\setC^2/\Gamma}$ obtained in such a way are
 commensurate uniformly in $t$. Thus, any of these norms could be used
 in the estimating process.
\end{rmk}

\subsubsection*{H\"older spaces on orbifold}
Similarly, we define a radius function $\rho_B$ on $\Xbar$, that is a
smooth function such that $\rho_B>0$ and $ \rho_B=r$  on $U$.
We define the $C^{k,\alpha}_\delta(\Xbar)$ with the same formula as above, using
instead the metric $\gbar$ (or alternatively $g_{B,t}$).

\subsubsection*{H\"older spaces on the gluing}
Finally a third weighted norm can be defined on $\Xhat$ itself.
For this purpose, we define a weight function $\rho$ (depending on
the parameters $t$) on
$\Xhat$ as follows:
put $\rho=\rho_B$ on the complement of $\Uhat$,
  $\rho = r$ on
the domain $b\leq r\leq 1$ of $\Uhat$ and  $\rho = \epsilon H_{\epsilon^{-1}}^*\rho_A$
on the rest. 
Using the same formula as above with metrics $h_t$ (or
alternatively $\tilde h_t$), we define a norm
$\|f\|_{C^{k,\alpha}_\delta(\Xhat,t)}$ on $\Xhat$. 

Given a $m$-form $f$ on $\Xhat$, we can interpret a
$C^{k,\alpha}_\delta(t)$-estimate on $f$ as an estimate on each piece of
the manifold.
We decompose $f=f_A+f_B$ where $f_A=(1-\chi_{(b,2b)}(\rho))f$
and $f_B= \chi_{(b,2b)}(\rho) f$. Then $f_A$ is supported on the domain
$\rho\leq 2b$ and $f_A=f$ on the domain $\rho\leq b$.
Similarly $f_B$ is supported in the domain $\rho\geq b$ and $f=f_B$
on the domain $\rho\geq 2b$. 
Then  $\|f\|_{C^{k,\alpha}_\delta(\Xhat,t)}$ is uniformly commensurate
with the norms 
\begin{equation}
  \label{eq:comparenorms}
   \| f_B\|_{{C^{k,\alpha}_\delta}(\Xbar,t)}+ \epsilon^{-m-\delta}\| (H_{\epsilon^{-1}})_*f_A\|_{{C^{k,\alpha}_\delta}(\widehat{\setC^2/\Gamma},t)}
\end{equation}
where $m$ is the degree of the form $f$ and $H_{\epsilon^{-1}}$ is used to identify
the domain $\rho\leq 2b$ with the domain $\rho_A\leq 2b/\epsilon$.

\begin{rmk}
  The asymptotics of the metrics $\gbar$, $\gmod_{\epsilon^{-2}\zeta(t)}$ and complex
  structures $\Imod_{\epsilon^{-2}\zeta(t)}$, together with the naturality of the
  constructions of the 
  metrics $g_A$, $g_B$, $h_A$, $\tilde h_t$, $h_t$ and the fact that the functions
  $\chi_{(c,2c)}$ are uniformly bounded in $C^k_0$-norm imply that the
  H\" older norms defined using any of these metrics lead to uniformly
  commensurate 
  norms. Therefore, we could use any of these metrics for estimating
  the $C^{k,\alpha}_\delta$-norms. 
\end{rmk}

\subsection{Background Laplacian}
The Cauchy-Riemann operator  on $(\Xhat,J_t)$  is denoted $
\delb_{J_t}$ or $\delb_t$. Its  adjoint  deduced from the
Hermitian metric $h_t$ is denoted $\delb^*_t$. Then the Dolbeault
Laplacian is given by
$$\Box_t= \delb^*_t \delb_t+ \delb_t\delb^*_t$$ for $(p,q)$-forms
on $(\Xhat,J_t)$.

\subsubsection*{Error terms}
It should be pointed out that the Dolbeault Laplacian $\Box_t$ need not agree
with the Riemannian Laplacian $\Delta_t$ of $h_t$. Indeed, the metric $h_t$ is
not necessarily K\"ahler. Thus, $h_t$ is Hermitian, by construction, but its
K\"ahler form $\varpi_t=h_t(J_t\cdot,\cdot)$ is not a priori closed. In this section, we
investigate how close $h_t$ is to be K\"ahler.  In particular, we prove the
following lemma:
\begin{lemma}
\label{lemma:estid}
For every $k\geq 0$, there exist a constant $C_k>0$ such that for
every $\delta<0$ sufficiently close to $-2$, $t>0$ and $\beta=\frac
2{2-\delta}$, we have
$$  \| d\varpi_t\|_{C^{k,\alpha}_{\delta-1}(t)} \leq C_k \epsilon^2.
$$
Similarly, we have an estimate
$$  \| \nabla^{LC}J_t\|_{C^{k,\alpha}_{\delta-1}(t)} \leq C_k\epsilon^2.
$$
\end{lemma}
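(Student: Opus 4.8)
The plan is to estimate $d\varpi_t$ and $\nabla^{LC}J_t$ separately on each piece of the glued manifold $\Xhat$, using the commensurability of the norms in \eqref{eq:comparenorms}. On the region $\rho\geq 2b$ (away from the exceptional divisor), the metric $h_t$ coincides with $g_{B,t}$, which is the \emph{K\"ahler} form $\ombar$ outside $r\leq 4b$ of $U$ and the K\"ahler form $dd^c_{J_0}(r^2+\chi_{(2b,4b)}(r)\eta)$ inside; hence $d\varpi_t=0$ and $\nabla^{LC}J_t=0$ there. On the region $\rho\leq b$ of $\Uhat$, after applying the homothety $H_{\epsilon^{-1}}$ the metric $h_t$ is (a rescaling of) the Kronheimer model metric $\gmod_{\epsilon^{-2}\zeta(t)}$, which is genuinely K\"ahler for its complex structure $\Imod_{\epsilon^{-2}\zeta(t)}$; so again both quantities vanish on that inner piece. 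Thus the only contributions come from the two gluing annuli: the ``outer'' annulus $2b\leq r\leq 4b$ where $g_{B,t}$ interpolates via $\chi_{(2b,4b)}$ between flat and $\ombar$, and the ``inner'' annulus $b/\epsilon\leq R\leq 2b/\epsilon$ where $g_{A,t}$ interpolates via $\chi_{(b/\epsilon,2b/\epsilon)}$ between $\gmod_{\epsilon^{-2}\zeta(t)}$ and $\geuc$; additionally there is the error from the Hermitian projection \eqref{eq:hversion}, since $h_t$ differs from $\tilde h_t$ by a $J_t$-averaging which is nontrivial only on the transition zone $b\leq r\leq 2b$ where $J_t$ is not yet exactly $J_0$.

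The estimates on the annuli are of cut-and-paste type. On the inner annulus, $R\sim b/\epsilon = \epsilon^{\beta-1}$, the correction is $\xi_t=\cO(R^{-4})$ together with all its derivatives (by the asymptotic development in \S\ref{sec:krogra}(ii)), and differentiating the cut-off $\chi_{(b/\epsilon,2b/\epsilon)}$ costs a factor $\epsilon/b=\epsilon^{1-\beta}$ per derivative, which is dominated by the $R^{-1}$ already present. So $d\varpi_t$ and $\nabla^{LC}(\Imod_{\epsilon^{-2}\zeta(t)})$ on this annulus are $\cO(R^{-5})$, i.e. $\cO((b/\epsilon)^{-5})=\cO(\epsilon^{5(1-\beta)})$ in the rescaled metric; translating back through the $\epsilon^{-m-\delta}$ factor in \eqref{eq:comparenorms} (here $m=3$ for the $3$-form $d\varpi_t$, and one weight unit for $\nabla$) and choosing $\beta=\tfrac2{2-\delta}$, i.e. $\delta=2-\tfrac2\beta$, makes the net exponent exactly $2$ — this is the purpose of the specific relation between $\beta$ and $\delta$, and of requiring $\delta$ close to $-2$ (so $\beta$ close to $\tfrac12$). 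On the outer annulus, $r\sim b=\epsilon^\beta$, we have $\eta=\cO(r^4)=\cO(\epsilon^{4\beta})$ with derivatives, the cut-off costs $1/b=\epsilon^{-\beta}$ per derivative, so $d(dd^c\chi_{(2b,4b)}(r)\eta)$ is $\cO(r)=\cO(\epsilon^\beta)$ in the background metric, and with the weight $\rho_B^{-\delta+1}\sim r^{-\delta+1}$ in the $C^{k,\alpha}_{\delta-1}$-norm one gets $\cO(b^{2-\delta})=\cO(\epsilon^{\beta(2-\delta)})=\cO(\epsilon^2)$, again by the same choice of $\beta$. The Hermitian-projection error is controlled because $J_t-J_0=\cO(R^{-4})$ on the transition zone (Kronheimer asymptotics pushed forward), so the symmetrization \eqref{eq:hversion} changes $\tilde h_t$ by a term of the same order as the annulus contributions.

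The main obstacle is bookkeeping the weights consistently across the rescaling $H_{\epsilon^{-1}}$ and the two norm conventions: one must verify that the model asymptotics $\xi_t=\cO(R^{-4})$, $\Imod_{\epsilon^{-2}\zeta(t)}=\Ieuc+\cO(R^{-4})$ are \emph{uniform in $t$} — which is exactly what \S\ref{sec:summary} arranges, since $\epsilon^{-2}\zeta(t)\to\dot\zeta$ lies off Kronheimer's wall, so the $Y_{\epsilon^{-2}\zeta(t)}$ stay in a compact family of smooth gravitons — and then track how the factor $\epsilon^{-m-\delta}$ and the extra $\epsilon^{-1}$ from one covariant derivative in the $C^{k,\alpha}_{\delta-1}$-norm combine with the gain $\cO(R^{-5})=\cO(\epsilon^{5(1-\beta)})$ from the inner annulus and $\cO(b^{2-\delta})=\cO(\epsilon^{\beta(2-\delta)})$ from the outer one. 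The remaining subtlety is that $d\varpi_t$ and $\nabla^{LC}J_t$ are not independent: in a K\"ahler situation $\nabla^{LC}J=0$ and $d\omega=0$ are equivalent, and the same comparison of connections (Levi-Civita of $h_t$ versus Chern connection of $(h_t,J_t)$) shows that the torsion of $J_t$ and $d\varpi_t$ differ by terms involving $\nabla h_t$, which on the pieces where $h_t$ is a model metric vanish and on the annuli are of the size just estimated; so the two displayed inequalities are proved by literally the same computation, which is why they are stated together.
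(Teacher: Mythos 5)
Your proposal has a genuine gap, and it is the central one: you implicitly treat $J_t$ as if it equals $J_0$ on the region $\rho\geq 2b$, which is false. The complex structure $J_t$ is the complex structure of the whole deformed surface $\cMhat_t$, transported to $\Xhat$ via the trivialization $\phi$; it differs from $J_0$ \emph{everywhere}, not only near the exceptional divisor. Consequently, on $\rho\geq 2b$ the metric $h_t$ obtained by the $J_t$-Hermitian projection \eqref{eq:hversion} does \emph{not} coincide with $g_{B,t}$ (which is only $J_0$-Hermitian there), $\varpi_t=h_t(J_t\cdot,\cdot)$ is \emph{not} $\omega_{B,t}$, and neither $d\varpi_t$ nor $\nabla^{LC}J_t$ vanishes. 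The statement ``the Hermitian projection is nontrivial only on $b\leq r\leq 2b$'' is the root of the error. The paper's proof has to estimate $|J_t-J_0|$ on the entire outer region: on the compact part $\Xhat\setminus\Uhat$ one has $|J_t-J_0|=\cO(t^\ordre)=\cO(\epsilon^2)$ — and this is precisely where the non-degeneracy hypothesis $\ordre\geq p$ enters, a hypothesis your proof never invokes, which is a warning sign in itself — while on the intermediate annulus $2b\leq\rho\leq 1$ one has $J_t=J_0+\cO(\epsilon^4r^{-4})$ from the rescaled Kronheimer asymptotics. Both regions give nonzero contributions of exactly the announced size $\cO(\epsilon^2)$, and your decomposition misses them entirely.

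There is also a secondary misdiagnosis of the outer annulus $2b\leq r\leq 4b$. You attribute a nonzero $d\varpi_t$ there to the cut-off $\chi_{(2b,4b)}(r)\eta$ in $\omega_{B,t}=dd^c_{J_0}(r^2+\chi_{(2b,4b)}(r)\eta)$, and then compute ``$d(dd^c\,\chi_{(2b,4b)}(r)\eta)=\cO(r)$''. But $\omega_{B,t}$ is by construction $dd^c$ of a potential, hence closed identically; differentiating the cut-off contributes to the metric $g_{B,t}$ (and hence to curvature quantities, which is used in Proposition~\ref{prop:estiscal}) but not to $d\omega_{B,t}$. The actual error on that annulus again comes from $J_t-J_0=\cO(\epsilon^4r^{-4})$, which gives $d\varpi_t=\cO(\epsilon^4r^{-5})$ and, in the weighted norm, a contribution of the form $\cO(\epsilon^{4-\beta(4+\delta)})$ which is $\cO(\epsilon^2)$ for $\delta$ near $-2$. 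So your numerology $\beta(2-\delta)=2$ is applied to a source of error that is not there, while the real source, controlled by a different exponent, is omitted. Your treatment of the inner annulus $b\leq r\leq 2b$ (model asymptotics $\xi_t=\cO(R^{-4})$, uniformity in $t$ via the compact family of gravitons away from the wall, the rescaling factor in \eqref{eq:comparenorms}) is in the right spirit and matches the paper's fourth case, but the proof as a whole cannot be salvaged without the global estimate on $J_t-J_0$.
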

\begin{proof}
  On the compact domain $\Xhat\setminus \Uhat$, the family of complex structure
  $J_t$ converges smoothly to $J_0$, and $|J_t-J_0| =\cO(t^\ordre)$ where $\ordre$ is
  the order of the covering (\ref{ordre:revetement}). By assumption, the
  variation is non degenerate ($\ordre \geq p$), hence $|J_t-J_0|=\cO(\epsilon^2)$ on the
  compact domain. It follows that $\varpi_t$ converges smoothly to $\ombar$, the
  K\"ahler form of the orbifold metric $\gbar$. Since $d\ombar=0$, the
  estimate $\| d\varpi_t\|_{C^{1,\alpha}} =\cO(\epsilon^2)$ on this domain follows.

  On the domain $r\leq b$, the metric $\tilde h_t$ is $J_t$-Hermitian. In fact
  it agrees with the model metric $\gmod_{\epsilon^{-2}(\zeta(t)}$ up to a scaling
  factor $\epsilon^2$. Therefore $\tilde h_t=h_t$ and the $J_t$-Hermitian metric
  is K\"ahler on this domain. In particular $d\varpi_t=0$.

  On the domain $2b\leq \rho \leq 1$, we have $\tilde h_t=g_{B,t} $ and $J_t=
  \Imod_{\zeta(t)}=H_t^*\Imod_{\epsilon^{-2}\zeta(t)}$. However $\Imod_{\epsilon^{-2}\zeta(t)}=
  \Ieuc+\cO(R^{-4})$. It follows that $J_t=J_0 +\cO(\epsilon^4 r^{-4})$.  Since
  $g_{B,t}$ is K\"ahler w.r.t. the complex structure $J_0$, we have $h_t=
  g_{B,t} + \cO(\epsilon^4 r^{-4})$ and $\varpi_t = \omega_{B,t} + \cO(\epsilon^4 r^{-4})$.  Thus,
  $d\varpi_t = \cO(\epsilon^4 r^{-5})$ hence $r^{1-\delta}d\varpi_t = \cO(\epsilon^{4-\beta(5+\delta)})$ on the
  domain $2b \leq r \leq 1$. That is to say $\|d\varpi_t\|_{C^{k,\alpha}_{\delta-1}}
  =\cO(\epsilon^{4-\beta(5+\delta)})$ on the annulus $2b \leq r \leq 1$. We see that if delta is
  sufficiently close to $-2$ then the error term is a $\cO(\epsilon^2)$.

  On the domain $b \leq \rho \leq 2b$, we can rescale via the homothety $H_t$ and
  look at the construction on the annulus $ b/\epsilon\leq R\leq 2b/\epsilon$ of $\widehat
  {\setC^2/\Gamma}$.  Up to a scaling factor $\epsilon^2$, the metric $\tilde h_t$
  corresponds to $g_{A,t}$. On the other had, we have $g_{A,t} =
  \gmod_{\epsilon^{-2}\zeta(t)}+ \cO(R^{-4})$.  It follows that the
  $\Imod_{\epsilon^{-2}\zeta(t)}$-Hermitian metric $h_{A,t}$ deduced from $g_{A,t}$
  satisfies the estimate $h_{A,t}= \gmod_{\epsilon^{-2}\zeta(t)}+ \cO(R^{-4})$ and we
  have a similar estimate for the corresponding K\"ahler forms $\varpi_{A,t}$ and
  $\omod_{\epsilon^{-2}\zeta(t)}$ defined using $\Imod_{\epsilon^{-2}\zeta(t)}$.  Hence $d\varpi_{A,t}
  = \cO(R^{-5})$.  Using the homothety again, we obtain the estimate
  $d\varpi_t=\cO(\epsilon^4r^{-5})$ (there is an factor $\epsilon$ coming from the fact that
  we are taking the norm of a $3$-form for the rescaled metric) on the
  annulus $b\leq r \leq 2b$. As in the previous case, we deduce an estimate
  $\|d\varpi_t\|_{C^{k,\alpha}_{\delta-1}}=\cO(\epsilon^{4-\beta(5+\delta)})$ on the annulus $b\leq r\leq 2b$ as
  well.
\end{proof}
  From now on, we shall fix the gluing scale $b=\epsilon^\beta$, with
  the convention
  \begin{equation}
\label{eq:fixbeta}
\fbox{$\beta = \frac 2{2-\delta}$}    
  \end{equation}
as in the above lemma. The point is that for $\delta\in(-2,0)$, we
have $\beta \in (\frac 12, 1)$, $\lim_{\delta\to -2}\beta =\frac 12$.

Then we deduce that the K\"ahler form $\varpi_t$ is almost harmonic in
the sense of the following corollary:
\begin{cor}
\label{cor:estidelta}
For $\delta<0$ sufficiently close to $-2$ 
there exists a constant  $C>0$ such that for
all  $t>0$
$$  \| \Box_t\varpi_t\|_{C^{0,\alpha}_{\delta-2}(t)} \leq C\epsilon^2, $$
and 
$$  \| \Delta_t\varpi_t\|_{C^{0,\alpha}_{\delta-2}(t)} \leq C\epsilon^2. $$
\end{cor}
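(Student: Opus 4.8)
The corollary should follow directly from Lemma~\ref{lemma:estid} by combining the pointwise bound on $d\varpi_t$ with a bound on the codifferential $d^*\varpi_t$, plus the standard relation between the Dolbeault and Riemannian Laplacians on an almost-K\"ahler manifold. First I would observe that $\Box_t\varpi_t$ and $\Delta_t\varpi_t$ can both be expressed through first-order differential operators applied to $d\varpi_t$ and through curvature-type terms controlled by $\nabla^{LC}J_t$. Indeed, on a Hermitian (not K\"ahler) manifold the Weitzenb\"ock-type formulas for $\Box_t$ and $\Delta_t$ differ by terms that are algebraic in the torsion of the Chern connection, hence bounded by $\nabla^{LC}J_t$; and since $\varpi_t$ is a distinguished $2$-form (the fundamental form), $d^*\varpi_t$ is itself controlled by $\nabla^{LC}J_t$. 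So schematically $\Box_t\varpi_t = \pm{*}d{*}d\varpi_t + (\text{terms in }\nabla J_t\text{ and }d\varpi_t)$, and similarly for $\Delta_t$.

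The key step is then bookkeeping with the weighted norms. Applying a first-order operator to an element of $C^{k,\alpha}_{\delta-1}(t)$ lands in $C^{k-1,\alpha}_{\delta-2}(t)$, with a constant uniform in $t$ by the remark on commensurability of the norms built from the various background metrics. Thus from $\|d\varpi_t\|_{C^{k,\alpha}_{\delta-1}(t)} \leq C_k\epsilon^2$ one gets $\|d{*}d\varpi_t\|_{C^{k-1,\alpha}_{\delta-2}(t)} \leq C\epsilon^2$, and from $\|\nabla^{LC}J_t\|_{C^{k,\alpha}_{\delta-1}(t)} \leq C_k\epsilon^2$ one gets that the torsion correction terms are $\cO(\epsilon^2)$ in $C^{k-1,\alpha}_{\delta-2}(t)$ as well. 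Taking $k=1$ in Lemma~\ref{lemma:estid} yields both displayed estimates of the corollary. I would also note that the hypothesis ``$\delta<0$ sufficiently close to $-2$'' is inherited verbatim from the lemma (it is what makes the annular error term $\epsilon^{4-\beta(5+\delta)}$ a $\cO(\epsilon^2)$ after fixing $\beta = \frac{2}{2-\delta}$).

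**Main obstacle.** The only delicate point is making precise, and uniform in $t$, the algebraic identity expressing $\Box_t - \tfrac12\Delta_t$ (acting on the fundamental form, or more generally) purely in terms of the torsion of $J_t$ and first derivatives of $d\varpi_t$. One has to be careful that no term involving two derivatives of the metric appears without a compensating factor of torsion; this is a classical fact about the comparison of the Dolbeault and de Rham Laplacians on Hermitian manifolds, but the constant must be controlled independently of $t$, which is guaranteed here because all the relevant background quantities (curvature of $h_t$, injectivity radius, and the $C^k$-norms of the cut-offs $\chi_{(c,2c)}$) are uniformly bounded in $t$ by the setup of \S\ref{sec:summary} and the remarks in \S\ref{sec:holder}. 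Once that identity is in hand the corollary is immediate.
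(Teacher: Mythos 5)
Your proposal is correct and follows essentially the same route as the paper: reduce both Laplacians to one further derivative of $d\varpi_t$ and of $\nabla^{LC}J_t$, use the self-duality $*\varpi_t=\varpi_t$ to control $d^*\varpi_t$ by $d\varpi_t$, take $k=1$ in Lemma~\ref{lemma:estid}, and absorb the factor of $\rho$ lost to one derivative via the weight shift $\delta-1\to\delta-2$. The only presentational difference is that the paper does not invoke a Bochner--Kodaira--Nakano comparison between $\Box_t$ and $\tfrac12\Delta_t$ (whose correction is a first-order, not algebraic, operator in the torsion); instead it estimates $\Box_t\varpi_t$ directly by writing $\nabla^{Chern}_t=\nabla^{LC}_t+T$ with $T=\cO(\nabla^{LC}_tJ_t)$, which sidesteps the precision issue you flagged in your ``main obstacle'' paragraph and lands on exactly the same bound.
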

\begin{proof}
  Using Lemma~\ref{lemma:estid}, together with the fact that
  $*\varpi_t=\varpi_t$ we deduce an estimate
$$
 \| d^*\varpi_t\|_{C^{1,\alpha}_{\delta-1}(t)}  \leq C_1\epsilon^2.
$$
Therefore we have an estimate on $dd^*\varpi_t$ and $d^*d\varpi_t$ in
$C^{0,\alpha}_{\delta-2}(t)$-norm and the result follows for
$\Delta_t\varpi_t$.

The same proof works with the Dolbeault Laplacian. We merely use the
fact that the norm of
$*\delb_t\varpi_t$ is controlled by the norm of
$d\varpi_t$. The next step to deduce a control on
$\delb_t*\delb_t\varpi_t$. The pointwise norm of this tensor is
controlled by the pointwise norm of $\nabla^{Chern}_t
*\delb_t\varpi_t$. The metric being Hermitian, we only have
$\nabla^{Chern}_t= \nabla^{LC}_t + T$ where $T$ is a tensor such that
$T=\cO(\nabla ^{LC}_t J_t)$. Using again \ref{lemma:estid}, we
conclude that  
$$
|\nabla^{Chern}_t *\delb_t\varpi_t|\rho^{\delta-2} \leq
 |\nabla^{LC}_t *\delb_t\varpi_t|\rho^{\delta-2} + (|T|\rho) (|*\delb_t\varpi_t|\rho^{1-\delta}).
$$
The estimate follows and we have the control on $\Box_t\varpi_t$ with
the $C^{0,\alpha}_{\delta-2}$-norm.
\end{proof}

\subsubsection*{The Laplacian and gravitons}
The space $\widehat{\setC^2/\Gamma}$ is endowed with a complex structure
$\Imod_{\epsilon^{-2}\zeta(t)}$ and K\"ahler metric $\gmod_{\epsilon^{-2}\zeta(t)}$. The parameter $t=0$
corresponds to the tangent graviton.
Each of these spaces has a corresponding Laplacian $\Box^A_t=\frac 12
\Delta^A_t$. If $\delta$ is not an indicial root, the operator
$$
\Box^A_t:C^{2,\alpha}_\delta\to C^{2,\alpha}_{\delta-2}
$$
defined on $(p,q)$-forms with respect to $\gmod_{\epsilon^{-2}\zeta(t)}$ is
Fredholm. 

Indicial roots are well understood for such operators 
\begin{lemma}
\label{lemma:iroot}
Every  $\delta\in (-2,0)$ is not an indicial root.
In the case of $1$-forms every
$\delta\in (-3,1)$ is not an indicial root.
\end{lemma}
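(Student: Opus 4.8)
The plan is to identify the indicial roots of $\Box^A_t$ with those of a fixed flat model at the end of $\widehat{\setC^2/\Gamma}$, and then read them off from classical facts about harmonic forms on $\setR^4$. Recall that $\Box^A_t=\frac12\Delta^A_t$ is half the Hodge--de\,Rham Laplacian, so its indicial roots are determined purely by the leading part of the operator along the ALE end. By the asymptotics recalled in \S\ref{sec:krogra}(ii) (uniform in $t$, cf.\ \S\ref{sec:summary}), along that end the metric is $\geuc+\cO(R^{-4})$ and the complex structure is the standard one up to $\cO(R^{-4})$; hence for every $t$ the indicial roots of $\Box^A_t$ on $(p,q)$-forms coincide with those of the flat Hodge Laplacian $\Delta_{\mathrm{euc}}$ on the corresponding bundle of forms over the cone $(\setR^4\setminus\{0\})/\Gamma$. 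In particular they are $t$-independent, which is why no $t$ occurs in the statement.

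Next I would use the standard description of these roots (a routine separation of variables on a flat cone): $\delta$ is an indicial root of $\Delta_{\mathrm{euc}}$ on $p$-forms over $(\setR^4\setminus\{0\})/\Gamma$ exactly when there is a nonzero $\Gamma$-invariant $\Delta_{\mathrm{euc}}$-harmonic $p$-form on $\setR^4\setminus\{0\}$ whose pointwise $\geuc$-norm is homogeneous of degree $\delta$. On flat $\setR^4$ the Hodge Laplacian (and, in the parallel frames $dz_j,d\bar z_j$, the Dolbeault Laplacian) acts diagonally as the scalar Laplacian on the coefficients, so such a form has homogeneous harmonic coefficients, and these exist only for degrees in $\{0,1,2,\dots\}\cup\{-2,-3,-4,\dots\}$. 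Thus all indicial roots are integers and none lies in $(-2,0)$, since there is no homogeneous harmonic function of degree $-1$ on $\setR^4$. This proves the first assertion for $(p,q)$-forms of every bidegree.

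For the second assertion it remains to rule out $-2,-1,0\in(-3,1)$ in the case of $1$-forms, i.e.\ of $(1,0)$- or $(0,1)$-forms. The value $-1$ is excluded as before. In degree $0$ the homogeneous harmonic $1$-forms on $\setR^4$ are the constant ones $\beta=\sum_i c_i\,dx_i$, and in degree $-2$ they are $R^{-2}\beta$ with $\beta$ constant (the only homogeneous harmonic functions of degree $-2$ on $\setR^4$ being multiples of $R^{-2}$). Such a form is $\Gamma$-invariant, hence descends to the quotient cone, only if the vector $(c_i)\in\setR^4$ is fixed by $\Gamma$; but $\Gamma\subset\SU_2$ is a nontrivial finite group, so some nonidentity element acts on $\setC^2\cong\setR^4$ with eigenvalues $e^{\pm i\theta}$, $\theta\neq 0$, hence without nonzero fixed vectors, whence $(\setR^4)^\Gamma=\{0\}$ and $\beta=0$. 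Therefore none of $-2,-1,0$ is an indicial root, and $(-3,1)$ is free of indicial roots for $1$-forms.

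The one point requiring care is precisely this last step: it is the absence of $\Gamma$-fixed vectors for finite $\Gamma\subset\SU_2$ that widens the admissible band from $(-2,0)$ for functions to $(-3,1)$ for $1$-forms, and it also shows the band is sharp, since $\delta=1$ is realised by the $\Gamma$-invariant harmonic $1$-form $\sum_i x_i\,dx_i=\frac12 d(R^2)$ and $\delta=-3$ by $d(R^{-2})$. Everything else is bookkeeping with homogeneous harmonic functions on $\setR^4$.
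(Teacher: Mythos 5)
Your proof is correct, and it follows the same underlying approach as the paper's: reduce to the flat model on the cone, observe that in the parallel Euclidean (or $dz_j,d\bar z_j$) frame the Hodge/Dolbeault Laplacian is the scalar Laplacian componentwise so that all indicial roots are integers in $\{0,1,2,\dots\}\cup\{-2,-3,\dots\}$, and for $1$-forms kill the remaining candidates $0$ and $-2$ by noting that a nontrivial finite $\Gamma\subset\SU_2$ fixes no nonzero vector of $\setC^2\cong\setR^4$. The only cosmetic differences are that you make the $(-2,0)$ case self-contained (the paper cites \cite{RolSin05}), and you exclude $\delta=-2$ by direct computation with $R^{-2}$-coefficients rather than invoking the duality $\delta\leftrightarrow -2-\delta$ as the paper does; both are fine, and your closing remark that $\delta=1$ and $\delta=-3$ are genuine indicial roots (via $d(R^2)$ and $d(R^{-2})$) is a nice check that the interval $(-3,1)$ is sharp.
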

\begin{proof}
  For the first part, see \cite{RolSin05}.
For the second part, one has to check that $0$ is not an indicial
root. This boils down to check that there are no $\Gamma$-invariant
parallel $1$-forms on $\setC^2$. By duality, it follows that $-2$ is
not an indicial root either.
\end{proof}

\subsubsection*{Harmonic forms on ALE spaces}
The space of Harmonic forms of type $(p,q)$  on the ALE space, denoted $\cH^{p,q}_{A,t}$,
 is defined as the kernel of $\Box^A_{t}C^{2,\alpha}_\delta\to
 C^{2,\alpha}_{\delta-2}$, for $\delta\in(-2,0)$.
Since there are no indicial root in the interval, it follows
that the definition for  $\cH^{p,q}_{A,t}$ in independent of the
choice of $\delta \in(-2,0)$. For $1$-forms we could also choose
$\delta\in (-3,1)$ as there are no indicial roots in this interval by
Lemma~\ref{lemma:iroot}.

If we choose $\delta$ sufficiently close to $-2$, we see that harmonic
forms are in $L^2$. In fact, the decay is even better according to the
following lemma
\begin{lemma}
\label{lemma:harmdecay}
  Any harmonic form $\gamma\in \cH^{p,q}_{A,t}$ satisfies $\gamma=\cO(R^{-3})$.
\end{lemma}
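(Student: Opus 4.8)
**Proof sketch for Lemma~\ref{lemma:harmdecay} (any harmonic $(p,q)$-form $\gamma\in\cH^{p,q}_{A,t}$ decays like $\cO(R^{-3})$).**

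The plan is to run the standard asymptotic-expansion argument for harmonic forms on an ALE space, using the Euclidean asymptotics of the metric $\gmod_{\epsilon^{-2}\zeta(t)}$ recorded in \S\ref{sec:krogra}(ii). First I would note that by definition $\gamma\in C^{2,\alpha}_\delta$ for $\delta\in(-2,0)$, so a priori $\gamma=\cO(R^{\delta-\epsilon})$ with $\delta$ arbitrarily close to $0$; the point is to improve this to $\cO(R^{-3})$. The mechanism is that, in the asymptotic region $R\geq 2$, the Dolbeault Laplacian $\Box^A_t$ differs from the flat Euclidean Laplacian $\Box^{\mathrm{euc}}$ (acting componentwise on a form in Euclidean coordinates on $\setC^2/\Gamma$) by an error term $E_t$ with $E_t\gamma=\cO(R^{-4})\cdot(\text{derivatives of }\gamma)$, because $\gmod_{\epsilon^{-2}\zeta(t)}=\geuc+\cO(R^{-4})$ and $\Imod_{\epsilon^{-2}\zeta(t)}=\Ieuc+\cO(R^{-4})$. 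So $\gamma$ satisfies $\Box^{\mathrm{euc}}\gamma = -E_t\gamma$, which decays faster than $\gamma$ itself.

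The key steps, in order, are: (1) identify the set of indicial roots of $\Box^{\mathrm{euc}}$ on $\setC^2/\Gamma$ acting on $(p,q)$-forms — these are the exponents $\nu$ such that $R^\nu\times(\text{a }\Gamma\text{-invariant spherical harmonic coefficient})$ is Euclidean-harmonic; the relevant ones are integers, and by Lemma~\ref{lemma:iroot} there are none in $(-2,0)$, while the first negative one after $0$ is $-2$ (corresponding to the Green's function type behaviour, but these are excluded by $\Gamma$-invariance / form-degree considerations as in the proof of Lemma~\ref{lemma:iroot}), with the next one being $-3$ (the dipole term, coefficient a degree-$1$ harmonic polynomial times $R^{-4}$, homogeneous of degree $-3$). (2) Run the bootstrap: starting from $\gamma=\cO(R^{-\eta})$ for some $\eta>0$ small, we have $\Box^{\mathrm{euc}}\gamma=\cO(R^{-\eta-4})$, and by standard elliptic theory on the cone (inverting $\Box^{\mathrm{euc}}$ on weighted spaces, using that $-\eta-2$ is not an indicial root once $\eta$ is chosen generically) we get $\gamma = \gamma_0 + \cO(R^{-\eta-2})$ where $\gamma_0$ is a homogeneous $\Box^{\mathrm{euc}}$-harmonic form of degree lying in the gap; since there is no indicial root in $(-2,0)$, $\gamma_0$ must in fact have degree $\leq -2$, forcing $\gamma=\cO(R^{-2+\text{better}})$. (3) Iterate once more: now $\gamma=\cO(R^{-2-\epsilon'})$ gives $\Box^{\mathrm{euc}}\gamma=\cO(R^{-6-\epsilon'})$, and the leading possible homogeneous harmonic term is at degree $-2$; but a homogeneous degree-$(-2)$ harmonic form would be an indicial-root solution at $\nu=-2$, excluded by Lemma~\ref{lemma:iroot} (or by $\Gamma$-invariance). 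Hence the leading surviving term sits at the next admissible exponent, which is $-3$, giving $\gamma=\cO(R^{-3})$.

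The main obstacle is step (1)–(3) bookkeeping of exactly which indicial roots occur for each form-type $(p,q)$ on $\setC^2/\Gamma$ and checking that $\Gamma$-invariance kills the potential $R^{-2}$ term so that the decay really jumps to $R^{-3}$ rather than stalling at $R^{-2}$; this is essentially the same computation as in Lemma~\ref{lemma:iroot} (no $\Gamma$-invariant parallel $1$-forms, hence no degree-$0$ or, dually, degree-$(-2)$ homogeneous harmonic pieces) combined with the observation that the error term $E_t$ gains a full $R^{-4}$, so each bootstrap step improves the decay by $2$ until we hit the first genuine indicial root below $-2$, namely $-3$. Alternatively, and more cleanly, one can argue by Fredholmness: since $\gamma$ lies in $C^{2,\alpha}_\delta$ for every $\delta\in(-2,0)$ and $\Box^A_t\gamma=0$, a standard regularity/asymptotics result for elliptic operators on ALE manifolds (e.g. as in \cite{RolSin05}) gives an asymptotic expansion of $\gamma$ whose leading term is a homogeneous $\Box^{\mathrm{euc}}$-harmonic form; the absence of indicial roots in $(-3,1)$ for $1$-forms and in $(-2,0)$ in general (Lemma~\ref{lemma:iroot}), together with the $R^{-4}$ improvement from $E_t$, pins the leading exponent at $-3$. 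I would present the Fredholm/expansion version, since the machinery is already in place from Lemmas~\ref{lemma:iroot} and the remark on commensurate norms.
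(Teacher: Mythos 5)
Your argument has a genuine gap at the crucial step, namely in explaining why the leading $R^{-2}$ term vanishes. You claim that ``a homogeneous degree $(-2)$ harmonic form would be an indicial-root solution at $\nu=-2$, excluded by Lemma~\ref{lemma:iroot} (or by $\Gamma$-invariance).'' But Lemma~\ref{lemma:iroot} only excludes indicial roots in the open interval $(-2,0)$ (and in $(-3,1)$ for $1$-forms); it says nothing about $-2$ itself for general $(p,q)$-forms. And $-2$ genuinely \emph{is} an indicial root for the scalar Laplacian on $\setC^2/\Gamma$: the Green's-function profile $R^{-2}$ is $\mathrm{O}(4)$-invariant, hence $\Gamma$-invariant for every $\Gamma\subset\U_2$, and it is a Euclidean-harmonic function on the cone. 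So $\Gamma$-invariance does \emph{not} kill the degree-$(-2)$ homogeneous piece in the scalar case, and your bootstrap would stall at $R^{-2}$.

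The mechanism that actually rules out the $cR^{-2}$ term is global, not local. The paper's proof first reduces the $(p,q)$-form case to the scalar case, writes the standard expansion $\gamma=cR^{-2}+\cO(R^{-3})$ (which is valid precisely because there is no indicial root in $(-3,-2)$, not because $-2$ is absent), and then observes that, by the ALE mass formula \cite[Theorem~8.3.6]{Joy00}, the coefficient $c$ is a universal multiple of $\int_M(\Box^A_t\gamma)\,\mathrm{vol}$, which vanishes since $\gamma$ is harmonic. This integral (divergence-theorem) identity is the ingredient your argument is missing. Your asymptotic-expansion framework up to the point ``$\gamma=cR^{-2}+\cO(R^{-3})$'' is fine and matches the paper; you need to replace the (incorrect) indicial/$\Gamma$-invariance claim for the $R^{-2}$ coefficient by this global integral argument.
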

\begin{proof}
It suffices to understand the case of a harmonic function $\gamma\in
C^{2,\alpha}_\delta$. The standard theory for Laplacian on ALE spaces
shows that $\gamma= cR^{-2}+\cO(R^{-3})$ since there are no indicial
roots in $(-3,-2)$. The coefficient $c$ is a
constant multiple of $\int \Box^A_t vol=0$ (cf. \cite[Theorem
8.3.6]{Joy00}) and the lemma follows. 
\end{proof}

We recall some standard results for Hodge theory on ALE spaces
(cf. \cite{Joy00} for instance):
The canonical map $\cH^{p,q}_{A,t}\to H^{p+q}(\widehat
{\setC^2/\Gamma},\setC)$ is injective with image denoted
$H^{p,q}_t$. In addition
$$
H^{k}(\widehat
{\setC^2/\Gamma},\setC) =\bigoplus_{j=0}^k H^{j,j-k}_t
$$
for all $0<k<4$. In particular we see that $\cH^{1,0}_{A,t}=\cH^{0,1}_{A,t}=0$.

We also have the following result
\begin{lemma}
  $$\cH^{2,0}_{A,t}=\cH^{0,2}_{A,t}=0$$
and
  $$\cH^{1,1}_{A,t}\simeq  H^2(\widehat{\setC^2/\Gamma,\setC})$$
for all $t$ sufficiently small.
\end{lemma}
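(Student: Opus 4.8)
The statement asserts two things about the ALE Kähler Ricci-flat space $\widehat{\setC^2/\Gamma}$ equipped with the Kronheimer metric $\gmod_{\epsilon^{-2}\zeta(t)}$: the vanishing of the $L^2$-harmonic $(2,0)$- and $(0,2)$-forms, and that $\cH^{1,1}_{A,t}$ has the full dimension of $H^2(\widehat{\setC^2/\Gamma},\setC)$. The first part follows from the hyperkähler structure and the decay of harmonic forms, the second from combining the first with the Hodge decomposition already recalled. The plan is to treat the $(2,0)$ case directly via parallel transport arguments on an ALE hyperkähler $4$-manifold, then read off $\cH^{1,1}$ from the displayed Hodge decomposition.

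\textbf{Step 1: $\cH^{2,0}_{A,t}=\cH^{0,2}_{A,t}=0$.} A harmonic $(2,0)$-form $\gamma$ on the Kähler surface is automatically closed and coclosed, hence (by the Weitzenböck formula on a Ricci-flat Kähler surface, where the curvature term acting on $(2,0)$-forms vanishes) \emph{parallel}; this uses Ricci-flatness of $\gmod_{\epsilon^{-2}\zeta(t)}$, which is part of Kronheimer's construction. But by Lemma~\ref{lemma:harmdecay}, any $\gamma\in\cH^{2,0}_{A,t}$ satisfies $\gamma=\cO(R^{-3})$, so a parallel form decaying at infinity must be identically zero. (Alternatively: $\Lambda^{2,0}$ of a hyperkähler $4$-manifold is spanned by $\omega_2+i\omega_3$ up to the $SO_3$-rotation of complex structures, i.e.\ by a parallel section, so a harmonic $(2,0)$-form is a function times this parallel form, the function is harmonic by the parallel reduction, and decay forces it to vanish — here one invokes Lemma~\ref{lemma:harmdecay} again, now for the function, noting $\delta\in(-3,-2)$ contains no indicial root by the discussion preceding that lemma.) Conjugation gives $\cH^{0,2}_{A,t}=0$ as well.

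\textbf{Step 2: $\cH^{1,1}_{A,t}\simeq H^2(\widehat{\setC^2/\Gamma},\setC)$.} The excerpt already records that the canonical map $\cH^{p,q}_{A,t}\to H^{p+q}(\widehat{\setC^2/\Gamma},\setC)$ is injective with image $H^{p,q}_t$, and that $H^k(\widehat{\setC^2/\Gamma},\setC)=\bigoplus_{j=0}^k H^{j,j-k}_t$ for $0<k<4$, and that $\cH^{1,0}_{A,t}=\cH^{0,1}_{A,t}=0$. Taking $k=2$ gives $H^2(\widehat{\setC^2/\Gamma},\setC)=H^{2,0}_t\oplus H^{1,1}_t\oplus H^{0,2}_t$. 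By Step 1 the outer two summands vanish, so $H^{1,1}_t=H^2(\widehat{\setC^2/\Gamma},\setC)$, and since $\cH^{1,1}_{A,t}\to H^{1,1}_t$ is an isomorphism (injectivity from the cited Hodge statement, surjectivity by definition of $H^{1,1}_t$ as the image), we conclude $\cH^{1,1}_{A,t}\simeq H^2(\widehat{\setC^2/\Gamma},\setC)$. The uniformity in small $t$ is automatic: everything used — Ricci-flatness, the asymptotics from \S\ref{sec:krogra}(ii), and the absence of indicial roots in $(-2,0)$ and $(-3,1)$ from Lemma~\ref{lemma:iroot} — holds uniformly as $t\to 0$ because the metrics $\gmod_{\epsilon^{-2}\zeta(t)}$ converge to the tangent graviton $Y_{\dot\zeta}$ on compact sets with the uniform asymptotic development stated in \S\ref{sec:summary}.

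\textbf{Main obstacle.} The only nontrivial point is Step 1, specifically justifying that a harmonic $(2,0)$-form is parallel \emph{and} that the decay rate $\cO(R^{-3})$ from Lemma~\ref{lemma:harmdecay} (which is stated for forms in $\cH^{p,q}_{A,t}$ with $\delta\in(-2,0)$) is strong enough to kill it. Parallelism is clean on a Ricci-flat Kähler surface since the relevant Weitzenböck curvature operator on self-dual, respectively anti-self-dual, $2$-forms involves only the Weyl and scalar curvature pieces, and on $\Lambda^{2,0}\oplus\Lambda^{0,2}\oplus\langle\omega\rangle$ the scalar-curvature term vanishes when $\scal=0$; combined with the $SO_3$-action of \S\ref{sec:krogra}(iv) one sees $\Lambda^{2,0}\oplus\Lambda^{0,2}$ carries the parallel spinor-induced structure, so harmonic sections are parallel. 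Once parallel, any nonzero such form has constant pointwise norm, contradicting $\cO(R^{-3})$ decay. I expect this to be essentially a two-line argument in the paper, citing Kronheimer's Ricci-flatness and Lemma~\ref{lemma:harmdecay}.
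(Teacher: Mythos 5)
Your proposal is correct but follows a genuinely different route from the paper. The paper does not use a Weitzenböck or parallelism argument at all: it observes that the canonical (resolution) complex structure corresponds to some parameter $\zeta''\notin D$ for which $H^{2,0}=H^{0,2}=0$ by Joyce (Theorem 8.4.2), and then invokes semi-continuity of the kernel dimension for the Fredholm family $\Box^A_\zeta$ along a path in $\fh\otimes\RR^3\setminus D$ from $\zeta''$ to $\zeta'=\epsilon^{-2}\zeta(t)$ (the path exists because the complement of the walls is connected), combined with the constancy of the total second Betti number, to transfer the vanishing to $\zeta'$. Your approach instead works directly on the fixed metric $\gmod_{\epsilon^{-2}\zeta(t)}$: you trivialize $\Lambda^{2,0}$ by the parallel holomorphic symplectic form $\Omega$, reduce to a harmonic function, and kill it by the decay of Lemma~\ref{lemma:harmdecay}. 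Both proofs are valid, and each buys something: the paper's argument is soft, avoids any curvature computation, and makes clear why non-degeneracy (staying off the wall $D$) is exactly what is needed; yours is more self-contained, gives the vanishing for every hyperk\"ahler parameter off the wall rather than merely ``for $t$ small,'' and extracts concretely where the decay rate $\cO(R^{-3})$ is used. One small caution on your main-line argument (not the parenthetical alternative, which is cleaner): on a noncompact manifold the implication ``$\Delta\gamma=\nabla^*\nabla\gamma=0 \Rightarrow \nabla\gamma=0$'' requires an integration by parts over exhausting balls whose boundary terms you must kill with the very decay from Lemma~\ref{lemma:harmdecay} — so the decay is needed twice, once to make the Weitzenb\"ock conclusion legitimate and once more to conclude that the parallel form vanishes; as written you invoke the decay only after asserting parallelism. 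Your parenthetical version (writing $\gamma=f\Omega$ with $f$ harmonic and then killing $f$ by the maximum principle plus decay) avoids this subtlety and is the tidier phrasing.
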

\begin{proof}
If $t$ is sufficiently small, $\zeta'=\epsilon^{-2}\zeta(t)$ does not belong
to the wall $D$ since the variation is assumed to be non degenerate.
Similarly, let $\zeta''=((\zeta_r)'',(\zeta_c)'')\not\in D$ be a parameter
such that $\cY_{(\zeta_c)''}$ is isomorphic to 
$\widehat{\setC^2/\Gamma}$ with its canonical complex structure.
We have  $H^{2,0}(\widehat{\setC^2/\Gamma})=
H^{0,2}(\widehat{\setC^2/\Gamma})=0$ (cf. \cite[Theorem 8.4.2]{Joy00}). The semi-continuity of the
dimension of the kernel for Fredholm operator forces this property to
hold along the path from $\zeta''$ to $\zeta'$ and we deduce the lemma.
\end{proof}

\subsubsection*{Laplacian and orbifold}
One can consider the K\"ahler orbifold $(\Xbar, J_0,\gbar)$ together
with its Laplacian. Alternatively, we can look at the manifold $X$,
the smooth locus of $\Xbar$ and the Laplacian defined between weighted
H\"older spaces 
$$
\Box^B: C^{2,\alpha}_\delta\to C^{2,\alpha}_{\delta-2}
$$
acting on $(p,q)$-forms with respect to $J_0$.

Like on the ALE space, this operator is Fredholm for $\delta\in (-2,0)$,
and for $\delta \in (-3,1)$ in the case of $1$-forms.
Moreover, its kernel $\cH_B^{p,q}$ corresponds to smooth harmonic forms on $\Xbar$.

\subsection{Approximate kernel}
One can construct an approximate kernel of the operator $\Box_t$ on
$(\Xhat,J_t,h_t)$ as follows. 
The spaces $\cH^{1,1}_{A,t}$ are the fibers of a smooth (trivial)
vector bundle over the base $t\in[0,d)$.  Given $\gamma_{A,t}\in
\cH^{1,1}_{A,t}$ and $\gamma_B\in \cH_B^{p,q}$, we construct a form $\gamma_t'$ on
$\Xhat$ by requiring that $\gamma'_t=H_{\epsilon^{-1}}^*\gamma_{A,t}$ on the domain $\rho\leq
b$, $\gamma'_t= \gamma_B$ on the domain $\rho\geq 4b$ and
$$
\gamma'_{t}=(1-\chi_{(b,2b)}(r)) H_{\epsilon^{-1}}^*\gamma_{A,t} + \chi_{(2b,4b)}(r)\gamma_B
$$
Then we call $\gamma_t$ the projection of $\gamma'_t$ onto forms of
type $(1,1)$ for the complex structure $J_t$.
Thus we have constructed a linear map
\begin{equation}
  \label{eq:linmap}
\Phi_t:\cH^{1,1}_{A,t}\oplus \cH^{1,1}_{B}\to \Omega^{1,1}_{J_t}(\Xhat)  
\end{equation}
For every $t>0$ small enough,  the linear map \eqref{eq:linmap} is
injective and its image will be denoted $\cK^{1,1}_t$.

Alternatively, using the isomorphisms
$H^2(\widehat{\setC^2/\Gamma},\setC)\simeq \cH^{1,1}_{A,t}$ and
$H^{1,1}(\cX)\simeq \cH^{1,1}_{B}$, the map $\Phi_t$ can be
though of as an isomorphism
$$
\Psi_t:H^2(\widehat{\setC^2/\Gamma},\setC)\oplus H^{1,1}(\cX)\to \cK^{1,1}_t.
$$

In the case of $1$-forms, we have $\cH^{1,0}_{A,t}=0$ and
$\cH^{0,1}_{A,t}=0$.  A similar construction gives linear maps 
$$
\Phi_t: \cH^{0,1}_{B}\to \Omega^{0,1}_{J_t}(\Xhat) ,\quad
 \Phi_t:\cH^{1,0}_{B}\to \Omega^{1,0}_{J_t}(\Xhat)  
$$
These maps are injective for $t$ small enough and their images are
denoted $\cK^{0,1}_t$ and $\cK^{1,0}_t$. Then, we define isomorphisms $\Psi_t: H^{0,1}(\cX)\to \cK_t^{0,1}$, $\Psi_t: H^{1,0}(\cX)\to \cK_t^{1,0}$.

Let $\|\cdot\|_A$ be an arbitrary norm on the cohomology
$H^\bullet(\widehat{\setC^2/\Gamma},\setC)$ and $\|\cdot\|_B$ an arbitrary norm on the
cohomology $H^\bullet(\Xbar,\setC)$. We introduce a family of norms $\|\cdot\|_{\delta,t}$ on $H^2(\widehat
{\setC^2/\Gamma},\setC)\oplus H^2(\Xbar,\setC)\simeq H^2(\Xhat,\setC)$ given by
$$
\|\Omega_{A}\oplus\Omega_B\|_{\delta,t}=
\epsilon^{-2-\delta}\|\Omega_{A}\|+\|\Omega_B\| 
$$

\begin{lemma}
\label{lemma:estipsi}
Given $k\geq 0$, there are constants $c_1,c_2>0$ such that for every
$\delta \in (-2,0)$ sufficiently close to $-2$
and every $t>0$ sufficiently small, we have for all $\Omega\in H^2(\widehat
{\setC^2/\Gamma},\setC)\oplus H^{1,1}(\cX)$
$$
c_1\|\Omega\|_{\delta,t}\leq \|\Psi_t(\Omega)\|_{C^{k,\alpha}_\delta(\Xhat,t)}\leq c_2 \|\Omega\|_{\delta,t}.
$$ 
For cohomology classes $\Xi\in H^{0,1}(\cX)$, we have a similar
result with the estimate
$$
c_1\|\Xi\|_B\leq \|\Psi_t(\Xi)\|_{C^{k,\alpha}_{\delta+1}(\Xhat,t)}\leq c_2\|\Xi\|_B
$$

\end{lemma}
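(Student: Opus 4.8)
The plan is to read off the left-hand norm from the comparison \eqref{eq:comparenorms}. Writing $\Psi_t(\Omega)=f_A+f_B$ for the decomposition induced by the cut-off $\chi_{(b,2b)}(\rho)$, that comparison gives, with constants uniform in $t$ and in $\delta$ near $-2$,
\[
\|\Psi_t(\Omega)\|_{C^{k,\alpha}_\delta(\Xhat,t)}\ \sim\ \|f_B\|_{C^{k,\alpha}_\delta(\Xbar,t)}+\epsilon^{-2-\delta}\,\|(H_{\epsilon^{-1}})_*f_A\|_{C^{k,\alpha}_\delta(\widehat{\CC^2/\Gamma},t)} .
\]
So the first task is to identify the two terms on the right. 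I would unwind the construction of $\gamma_t$: the projection onto $J_t$-type $(1,1)$ acts as the identity on the part built from $\gamma_{A,t}$, because on $\Uhat$ the structure $J_t=\Imod_{\zeta(t)}$ is carried by $H_{\epsilon^{-1}}$ exactly to $\Imod_{\epsilon^{-2}\zeta(t)}$, for which $\gamma_{A,t}$ is of type $(1,1)$; and the supports of the cut-offs occurring in the glued form $\gamma_t'$ match up so that $(H_{\epsilon^{-1}})_*f_A=(1-\chi_{(b/\epsilon,2b/\epsilon)}(\rho_A))^2\gamma_{A,t}$ is a clean cut-off of $\gamma_{A,t}$, while $f_B$ is the sum of a cut-off of $\gamma_B$ (equal to $\gamma_B$ itself on $\{\rho\ge 4b\}$), a bump of $H_{\epsilon^{-1}}^*\gamma_{A,t}$ supported on $\{b\le\rho\le 2b\}$, and the projection error $(\mathrm{proj}^{1,1}_{J_t}-\mathrm{id})\gamma_B=\cO(|J_t-J_0|)\gamma_B$, supported on $\{\rho\ge 2b\}$.

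Next I would show that the two leading pieces already produce the two terms of $\|\Omega\|_{\delta,t}$. On the ALE side, $\cH^{1,1}_{A,t}$ is a trivial finite-rank bundle over a compact sub-interval $[0,t_0]\subset[0,d)$ with fibre $H^2(\widehat{\CC^2/\Gamma},\CC)$, and the Kronheimer data $\gmod_{\epsilon^{-2}\zeta(t)}$ and radius $\rho_A$ depend continuously on $t$ down to $t=0$, where they converge to the tangent graviton $Y_{\dot\zeta}$. Choosing a $t$-continuous frame of $\cH^{1,1}_{A,t}$ and running a compactness argument on $[0,t_0]\times[-2,-2+\epsilon_0]$, the norm $\|\gamma_{A,t}\|_{C^{k,\alpha}_\delta(\widehat{\CC^2/\Gamma},t)}$ is uniformly commensurate with $\|\Omega_A\|$; here Lemma~\ref{lemma:harmdecay} ($\gamma_{A,t}=\cO(R^{-3})$, with $\delta>-3$) guarantees both that this norm is finite and that, for $t$ small, it is unaffected by the cut-off at the large radius $\rho_A\sim b/\epsilon$, which occurs where $\rho_A^{-\delta}|\gamma_{A,t}|\to 0$. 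On the orbifold side the analogous statement for $\gamma_B$ and $\|\Omega_B\|$ is easier, since $\gbar$, $J_0$ and $\cH^{1,1}_B\cong H^{1,1}(\cX)$ are fixed and the various background metrics give commensurate norms; one only needs that for $\delta\in(-2,0)$ the weighted norm of the smooth form $\gamma_B$ on $\Xbar$ is attained at distance bounded below from the singular point (where $\rho_B^{j-\delta}|\nabla^j\gamma_B|\to 0$), so restricting to $\{\rho\ge 4b\}$ leaves it unchanged as $t\to 0$.

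The hard part will be the remaining terms: one must verify that each carries a strictly positive power of $\epsilon$. Since $\chi_{(b/\epsilon,2b/\epsilon)}$ is uniformly bounded in the weight-adapted $C^k_0$-norm, the cut-off costs nothing on the ALE side, while on the orbifold side the cut-off of $\gamma_B$ costs a factor $b^{-\delta}=\epsilon^{-\beta\delta}\to 0$. The projection error is $\cO(\epsilon^2)$ outside $\Uhat$ and $\cO(\epsilon^4r^{-4})$ on $\{2b\le r\le 1\}$ (as in the proof of Lemma~\ref{lemma:estid}), contributing $\cO(\epsilon^{4-\beta(4+\delta)})\|\Omega_B\|$ to the weighted norm, and the $\gamma_{A,t}$-bump on $\{b\le\rho\le 2b\}$, where $|H_{\epsilon^{-1}}^*\gamma_{A,t}|_{\gbar}=\cO(\epsilon r^{-3})$, contributes $\cO(\epsilon^{1-\beta(3+\delta)})\|\Omega_A\|$ to the $\Xbar$-norm, the cut-off derivatives cancelling against the weight at scale $b$. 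With $\beta=\tfrac2{2-\delta}$ as fixed in \eqref{eq:fixbeta}, these exponents tend to $1$, $3$ and $\tfrac12$ as $\delta\to-2$, hence stay bounded below by a positive constant for $\delta\in(-2,-2+\epsilon_0)$. The upper bound follows at once, using $\epsilon^{-2-\delta}\ge 1$. For the lower bound one evaluates $\|f_B\|_{C^{k,\alpha}_\delta(\Xbar,t)}$ on $\{\rho\ge 4b\}$, which is disjoint from the support $\{b\le\rho\le 2b\}$ of the $\gamma_{A,t}$-bump, so $\|f_B\|_{C^{k,\alpha}_\delta(\Xbar,t)}\ge c\|\Omega_B\|$ for $t$ small; combined with $\|(H_{\epsilon^{-1}})_*f_A\|_{C^{k,\alpha}_\delta(\widehat{\CC^2/\Gamma},t)}\ge c\|\Omega_A\|$, this yields $\|\Psi_t(\Omega)\|\ge c_1(\|\Omega_B\|+\epsilon^{-2-\delta}\|\Omega_A\|)=c_1\|\Omega\|_{\delta,t}$.

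Finally, the $1$-form estimate follows from the same argument with $\cH^{0,1}_{A,t}=0$: there is then no ALE piece, $\Psi_t(\Xi)$ is supported in $\{\rho\ge 2b\}$ and equals $\gamma_B\in\cH^{0,1}_B$ up to the cut-off near the singularity and a projection error of size $|J_t-J_0|$, so \eqref{eq:comparenorms} reduces $\|\Psi_t(\Xi)\|_{C^{k,\alpha}_{\delta+1}(\Xhat,t)}$ to $\|\gamma_B\|_{C^{k,\alpha}_{\delta+1}(\Xbar,t)}$, commensurate with $\|\Xi\|_B$: the shifted weight $\delta+1\in(-1,1)$ again makes the weighted norm of the smooth $\gamma_B$ insensitive to the cut-off on $\{\rho\le 2b\}$, and the projection error, now of order $\cO(\epsilon^{4-\beta(5+\delta)})$, is still negligible.
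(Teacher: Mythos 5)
Your proof is correct and follows essentially the same route as the paper's: apply the comparison \eqref{eq:comparenorms} to decompose the glued form into its $A$- and $B$-pieces, identify each weighted norm with the corresponding cohomology norm, and control cut-off and type-projection errors by positive powers of $\epsilon$. The paper's own proof is considerably terser ("it is readily checked that this norm is uniformly commensurate...") and does not explicitly spell out the lower-bound argument via disjointness of supports and Lemma~\ref{lemma:harmdecay}, so your version fills in exactly the details the authors leave implicit.
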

\begin{proof}
The injection $\cH^{1,1}_B\to \cK^{1,1}_t$  induced by $\Phi_t$ allows
to pull-back the $C^{k,\alpha}_{\delta}(\Xhat,t)$ norm. It is readily
checked that this norm is uniformly commensurate with the
$C^{k,\alpha}_\delta(\Xbar)$-norm. 

Similarly, the injection $\cH^{1,1}_{A,t}\to \cK^{1,1}_t$  induced by $\Phi_t$ allows
to pull-back the $C^{k,\alpha}_{\delta}(\widehat{\setC^2/\Gamma},t)$
norm. 
This norm is uniformly commensurate with the
$C^{k,\alpha}_\delta(\Xbar)$-norm up to a factor
$\epsilon^{-\delta-2}$ by \eqref{eq:comparenorms}. 

The proof of the second part of the statement goes along the same
lines, except that we do not have to deal with harmonic forms on the
ALE space.
\end{proof}

\subsubsection*{Uniform elliptic estimates}
A key step in order to construct K\"ahler forms is Hodge theory. More
precisely, we should control the first eigenvalues of the Dolbeault
Laplacian between weighted H\"older spaces:
\begin{prop}
\label{prop:linearapprox}
There exists a constant $c>0$ such that for all $t>0$ sufficiently
small and every $(1,1)$-form $\gamma$ on $(\Xhat,J_t)$, we have
$$
c\|\gamma\|_ {C^{2,\alpha}_{\delta}(t)}\leq
\|\gamma\|_{C^{0,\alpha}_{\delta}(t)} + \|\Box_t
\gamma\|_{C^{0,\alpha}_{\delta-2}(t)} .
$$

If in addition $\gamma$ is $L^2$-orthogonal to the space $\cK^{1,1}_t$ and
$\delta\in (-2,0)$ sufficiently close to $-2$, we have
$$
c\|\gamma\|_ {C^{2,\alpha}_{\delta}(t)}\leq
 \|\Box_t
\gamma\|_{C^{0,\alpha}_{\delta-2}(t)} .
$$
Similarly, if $\gamma$ is a $(0,1)$-form orthogonal to $\cK^{0,1}_t$, we have an estimate
$$
c\|\gamma\|_ {C^{2,\alpha}_{\delta+1}(t)}\leq
 \|\Box_t
\gamma\|_{C^{0,\alpha}_{\delta-1}(t)} .
$$
\end{prop}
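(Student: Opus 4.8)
The plan is to prove the first, unconditional, inequality as a uniform weighted Schauder estimate, and the two remaining ones as improvements obtained by a blow-up (contradiction) argument feeding on the Hodge theory of the two model pieces together with the orthogonality hypothesis. For the first inequality I would cover $\Xhat$ by a number of coordinate balls bounded independently of $t$, on which the geometry is uniformly controlled once one works in the ambient coordinates on the region $\rho\geq b$ (there $h_t$ is, up to small error, the fixed orbifold metric $\gbar$) and in the rescaled coordinates $H_{\epsilon^{-1}}$ on $\rho\leq 2b$ (there $\epsilon^{-2}h_t$ is the modified Kronheimer metric $g_{A,t}$, which by property (ii) in \S\ref{sec:krogra} and the non-degeneracy hypothesis is $C^{k,\alpha}$-close, uniformly in $t$, to the fixed metric of the tangent graviton $Y_{\dot\zeta}$). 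On each ball the interior Schauder estimate for the elliptic operator $\Box_t$ on $(1,1)$-forms holds with a $t$-independent constant; multiplying by the appropriate power of $\rho$, comparing the weighted norms on the two pieces through \eqref{eq:comparenorms}, and summing against a subordinate partition of unity yields $c\|\gamma\|_{C^{2,\alpha}_\delta(t)}\leq\|\gamma\|_{C^{0,\alpha}_\delta(t)}+\|\Box_t\gamma\|_{C^{0,\alpha}_{\delta-2}(t)}$. No indicial information enters here, and the same argument with the shift $\delta\mapsto\delta+1$ gives the $(0,1)$-form version.

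For the second inequality I would argue by contradiction. If it fails there are $t_n\to0$ (if the bad sequence stays away from $0$ one is on a fixed $\cMhat_t$, where the estimate is classical elliptic theory) and $(1,1)$-forms $\gamma_n$ on $(\Xhat,J_{t_n})$ with $\|\gamma_n\|_{C^{2,\alpha}_\delta(t_n)}=1$, $\gamma_n\perp_{L^2}\cK^{1,1}_{t_n}$ and $\|\Box_{t_n}\gamma_n\|_{C^{0,\alpha}_{\delta-2}(t_n)}\to0$. By the first inequality the $C^{2,\alpha}_\delta(t_n)$-norms are bounded, so after extracting a subsequence one obtains two limits: the $\gamma_n$ converge, locally on the smooth locus of $\Xbar$, to a $(1,1)$-form $\gamma_B$ in the kernel of $\Box^B$ on $C^{2,\alpha}_\delta$, i.e.\ $\gamma_B\in\cH^{1,1}_B$; and the rescalings $\hat\gamma_n=\epsilon_n^{-2-\delta}(H_{\epsilon_n^{-1}})_*\gamma_n$, uniformly bounded in $C^{2,\alpha}_\delta(\widehat{\setC^2/\Gamma},t_n)$ by \eqref{eq:comparenorms}, converge on compact subsets of $Y_{\dot\zeta}$ to a harmonic $(1,1)$-form $\gamma_A$ which, by Lemma~\ref{lemma:harmdecay}, decays like $O(R^{-3})$ and so lies in $\cH^{1,1}_{A,0}$. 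Since $\delta$ is close to $-2$ (in particular $\delta<-1$, so that the $L^2$-pairing of a form of growth $R^\delta$ with an $O(R^{-3})$ harmonic form converges), the identities $\langle\gamma_n,\Phi_{t_n}(0\oplus\psi_B)\rangle=0$ and $\langle\gamma_n,\Phi_{t_n}(\psi_A\oplus0)\rangle=0$, valid for all $\psi_B\in\cH^{1,1}_B$ and $\psi_A\in\cH^{1,1}_{A,t_n}$, pass to the limit --- using that $\Phi_{t_n}$ sends a fixed harmonic form to something converging to that form on the relevant piece, and that the middle-degree $L^2$ product is invariant under $H_{\epsilon^{-1}}$ --- and force $\gamma_B=0$ and $\gamma_A=0$.

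It then remains to turn the vanishing of both bubble limits into a contradiction with $\|\gamma_n\|_{C^{2,\alpha}_\delta(t_n)}=1$, that is, to locate the lost mass. It cannot sit in the region $\rho\geq c_0$, where $\gamma_n\to\gamma_B=0$, nor in $\rho\leq C_0\epsilon_n$, where $\hat\gamma_n\to\gamma_A=0$; so it must concentrate in the intermediate region $\epsilon_n\ll\rho\ll1$. There $h_{t_n}$ is $C^{k,\alpha}$-close to the flat metric on $\setC^2/\Gamma\setminus\{0\}$ (as $\xi_t=O(R^{-4})$ and $\eta=O(r^4)$) and $\Box_{t_n}\gamma_n$ is small; since $\delta\in(-2,0)$ is not an indicial root --- Lemma~\ref{lemma:iroot} in fact excludes them from the whole interval --- a separation-of-variables (indicial-root) analysis on the model cone $\setC^2/\Gamma\setminus\{0\}$, combined with a maximum-principle argument exploiting that $\rho^{-\delta}|\gamma_n|$ is already small at the two ends of this region, shows $\rho^{-\delta}|\gamma_n|\to0$ uniformly there; hence $\|\gamma_n\|_{C^{2,\alpha}_\delta(t_n)}\to0$, a contradiction. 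The $(0,1)$-form estimate follows by the same scheme with the weight $\delta+1$ in place of $\delta$, using that $\cH^{0,1}_{A,t}=0$ (so $\cK^{0,1}_t$ has no ALE component, only $\cH^{0,1}_B$) and the wider no-indicial-root window $(-3,1)$ from the second part of Lemma~\ref{lemma:iroot}.

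The step I expect to be the main obstacle is this last one: ruling out concentration of $\gamma_n$ in the intermediate/neck region, equivalently showing that the gluing creates no spurious small eigenvalue of $\Box_t$. This is exactly where the choice of $\delta$ in the Fredholm range and close to $-2$ is essential: the indicial-root gap of Lemma~\ref{lemma:iroot} guarantees that a nearly-harmonic $(1,1)$-form of growth rate $\rho^\delta$ which is small at both ends of the neck is globally small, while the proximity to $-2$ is what allows the orthogonality to $\cK^{1,1}_t$ to survive in the limit.
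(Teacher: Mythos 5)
Your proposal follows the same overall strategy as the paper: the first estimate is a standard uniform weighted Schauder bound, and the two improved estimates are proved by contradiction, extracting limits on the two model pieces (the orbifold $\Xbar$ and the rescaled ALE/tangent graviton), passing the $L^2$-orthogonality to $\cK^{1,1}_t$ to each limit to force it to vanish, and then using the absence of indicial roots in $(-2,0)$ to rule out concentration in the intermediate annulus. The one place where you diverge slightly is the treatment of the neck: the paper picks a point $m_j$ where $\rho_j^{-\delta}|\gamma_j|$ attains its maximum ($=1$ by normalization), splits into cases according to where $m_j$ sits, and when $m_j$ lands in the intermediate scale it rescales $\gamma_j$ around $m_j$ to extract a nonzero harmonic form of rate $\rho^\delta$ on the model cone $(\setC^2\setminus\{0\})/\Gamma$, contradicting Lemma~\ref{lemma:iroot} directly; you instead invoke an annulus-type estimate via separation of variables plus a comparison/``maximum-principle'' argument. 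These are two phrasings of the same indicial-gap mechanism, and either can be made rigorous; the paper's pointwise rescaling is arguably the cleaner route to writing it down (note in particular that a literal maximum principle does not apply to the form Laplacian, so your annulus estimate would itself be proved by the same rescaling-at-the-max argument). One small and useful observation in your write-up not spelled out in the paper: the constraint $\delta<-1$ is what makes the $L^2$-pairing with the $O(R^{-3})$ ALE harmonic forms converge uniformly on the growing domains $R\leq 4b_j/\epsilon_j$, complementing the paper's $\delta>-3$ condition on the orbifold side.
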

\begin{proof}
  The first part of the proposition is standard.
For the second part, let us argue by contradiction. If the proposition
is not true, there are weights $\delta\in(-2,0)$ arbitrarily close to $-2$ with and  families of parameter $t_j\to 0$ and
$(1,1)$-forms  $\gamma_j$ on $(\Xhat,J_t)$ such that
\begin{equation}
  \label{eq:contrad}
\|\gamma_j\|_{C^{0,\alpha}_\delta(t_j)}=1,\mbox { and }  \|\Box_{t_j}
\gamma_j\|_{C^{0,\alpha}_{\delta-2}(t_j)} \to 0.  
\end{equation}
The first part of the proposition provides a uniform
$C^{2,\alpha}_\delta$ estimate on $\gamma_j$. 

Then, up to extraction of a subsequence, we may assume that $\gamma_j$
converges in the $C^{0,\alpha}$ sense on every compact set of $X$
toward a form $\gamma$ on $X$ which is $\gbar$-harmonic and in
$C^{2,\alpha}_\delta(\Xbar,t_j)$. This implies that $\gamma$ extends as a smooth
orbifold form on the orbifold $\Xbar$.
Since each $\gamma_j$ is orthogonal to $\cK_t$ and $\delta >-3$, it follows that 
$\gamma$ is orthogonal to $\cH^{1,1}_B$. This forces $\gamma=0$. 

Similarly, we can transport $\gamma_j$ on the domain $R\leq 4b_j/\epsilon_j$ of
$\widehat{\setC^2/\Gamma}$ using the homothety $H_j=H_{\epsilon_j^{-1}}$.  Put $\mu_j= \epsilon ^{-2-\delta}
H^*_j\gamma_j$. By definition of the norm we obtain a uniform $C^{2,\alpha}$
bound on $\mu_j$. Up to extraction, we may assume that $\mu_j$ converge on
every compact set of $\widehat{\setC^2/\Gamma}$ to a harmonic
form $\mu\in C^{2,\alpha}_\delta$ on the tangent graviton.  Again, the
fact that harmonic forms must have a 
strong decay forces $\mu\in C^{2,\alpha}_{-3}$ by 
Lemma~\ref{lemma:harmdecay}. Then,  the fact that
$\gamma_j$ is orthogonal to $\cK^{1,1}t$ implies that $\mu$ is orthogonal to
$\cK^{1,1}_{A,t}$. We conclude $\mu=0$.

Let $m_j$ be a point where the function $|\rho_j^{-\delta}\gamma_j|$
is maximal equal to $1$ (cf. assumption \eqref{eq:contrad}).  Up to
extraction of a converging subsequence, we have either
\begin{enumerate}
\item $\rho_j(m_j)\leq 2b_j$
\item $\rho_j(m_j)\geq 2b_j$
\end{enumerate}
for all $j$.

Case (ii):
If $\rho_j(m_j)$ is bounded away from
zero we clearly have a contradiction. Indeed, after further
extraction, we may assume that $m_j$ converges to a point in $X$. But
we know that $\gamma_j$ converges to $0$ on every compact set of $X$
hence hence $|\rho_j^{-\delta}\gamma_j|(m_j)\to 0$ which is
impossible.

So we may assume up to extraction that $\lim \rho_j(m_j)=0$. Using
homotheties and rescaling for $\gamma_j$ again, we can extract a converging
subsequence on every 
compact set of the cone $\setC^2\setminus 0)/\Gamma$ such that the limit
is nonvanishing, harmonic and in  $C^{2,\alpha}_\delta$ on the cone. This is not
possible for $\delta$ is not an indicial root of the Laplacian.

Case (i):
If $\rho_j(m_j)/\epsilon_j$ is bounded, we have a contradiction. The
proof is the same as in
the first case, using the rescaled forms $\mu_j$ instead.

If it is not bounded, we may assume that it goes to infinity after
extraction. Then we use rescaling to extract a harmonic limit on the
cone, exactly as in the first case.
\end{proof}

\subsubsection*{From  approximate kernel to harmonic forms}
The spaces $\cK_t$ consist of forms which are approximately harmonic
in the sense of the following lemma.
\begin{lemma}
\label{lemma:estilaplpsi} Fix $-2<\delta<0$ sufficiently close to $-2$.
There is a constant $c>0$ such that for all $\Omega_A\in
H^2(\widehat{\setC^2/\Gamma},\setC)$, $\Omega_B\in H^{1,1}(\cX)$ and $\Omega=\Omega_A\oplus \Omega_B$, we have
$$
c\|\Box_t \Psi_t(\Omega_A) \|_{C^{0,\alpha}_{\delta-2}}\leq  \epsilon^{2-\beta(\delta+4)} \|\Omega_A\|_A
$$
and
$$
c\|\Box_t \Psi_t(\Omega_B) \|_{C^{0,\alpha}_{\delta-2}}\leq
\epsilon^{ -\beta\delta} \|\Omega_B\|_B
$$
In particular
$$
c\|\Box_t \Psi_t(\Omega) \|_{C^{0,\alpha}_{\delta-2}}\leq \epsilon^{\beta_1} \|\Omega\|_{\delta,t}
$$
where $\beta_1 = \min(-\beta\delta, 4+\delta -\beta(\delta+4))$ is
very close to $1$, by definition.

Similarly, if $\Xi\in H^{0,1}(\cX)$, we have
$$
c\|\Box_t \Psi_t(\Xi)\|_{C^{0,\alpha}_{\delta -1}}\leq \epsilon^{-\beta(\delta+1)}\|\Xi\|_B.
$$
\end{lemma}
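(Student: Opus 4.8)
The plan is to estimate $\Box_t\Psi_t(\Omega)$ by working separately on the three regions where the glued form $\gamma'_t$ that defines $\Psi_t(\Omega)$ has a simple description, and then assembling the pieces via the comparison of norms \eqref{eq:comparenorms}. Concretely, write $\Omega=\Omega_A\oplus\Omega_B$ and recall that $\gamma'_t = H_{\epsilon^{-1}}^*\gamma_{A,t}$ on $\rho\leq b$, $\gamma'_t=\gamma_B$ on $\rho\geq 4b$, and on the annulus $b\leq r\leq 4b$ it is the cut-off interpolation of those two harmonic pieces; then $\gamma_t=\Psi_t(\Omega)$ is the $(1,1)$-projection of $\gamma'_t$ for $J_t$. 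On the inner region $\gamma_{A,t}$ is $\Box^A_t$-harmonic and (after rescaling by $H_{\epsilon^{-1}}$) the metric $h_t$ agrees with $\gmod_{\epsilon^{-2}\zeta(t)}$, so $\Box_t$ applied there only sees (a) the difference between $\Box_t$ and $\Box^A_t$, which is controlled by Lemma~\ref{lemma:estid} through $d\varpi_t$ and $\nabla^{LC}J_t$, and (b) the error from taking the $J_t$-$(1,1)$-projection, again controlled by $\nabla^{LC}J_t=\cO(\epsilon^2)$. On the outer region the same mechanism applies with $\gamma_B$ harmonic for the orbifold Laplacian $\Box^B$ and $h_t$ close to $\gbar$.

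The first substantive step is therefore: show $\Box_t(H_{\epsilon^{-1}}^*\gamma_{A,t})$ and $\Box_t\gamma_B$ are each $\cO(\epsilon^2)$ times the relevant pointwise size of the harmonic form, measured in the weighted norms, using Lemma~\ref{lemma:estid} and Corollary~\ref{cor:estidelta} (exactly as $\Box_t\varpi_t=\cO(\epsilon^2)$ was derived there); here one also uses that the $(1,1)$-projection differs from the identity by $\cO(\nabla^{LC}J_t)$. The second step is the gluing annulus $b\leq r\leq 4b$: there the leading contribution to $\Box_t\gamma_t$ comes from the derivatives of the cut-off functions $\chi_{(b,2b)}$, $\chi_{(2b,4b)}$ hitting $\gamma_{A,t}-\gamma_B$ (both of which are individually harmonic, so the error is purely the commutator with the cut-offs), plus lower-order terms from the two mechanisms above. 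On the annulus $\rho\sim b=\epsilon^\beta$ one has $\rho_A=R\sim b/\epsilon=\epsilon^{\beta-1}$; the harmonic ALE form decays like $R^{-3}$ by Lemma~\ref{lemma:harmdecay}, and the cut-off derivative costs a factor $b^{-1}=\epsilon^{-\beta}$ per derivative. Keeping careful track of these powers, together with the weight factor $\rho^{2-\delta}$ from the $C^{0,\alpha}_{\delta-2}$-norm and the normalization factor $\epsilon^{-2-\delta}$ built into $\|\cdot\|_{\delta,t}$ via Lemma~\ref{lemma:estipsi}, produces the stated exponents $2-\beta(\delta+4)$ for the $\Omega_A$-part and $-\beta\delta$ for the $\Omega_B$-part (the latter coming from the $\gamma_B$-side of the annulus, where $\rho^{2-\delta}\sim\epsilon^{\beta(2-\delta)}$ and the two cut-off derivatives give $\epsilon^{-2\beta}$, i.e.\ $\epsilon^{-\beta\delta}$ overall), and $-\beta(\delta+1)$ in the $1$-form case by the same bookkeeping shifted by one in degree/weight. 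Summing the two contributions and invoking $\|\cdot\|_{\delta,t}$ gives the combined bound with $\beta_1=\min(-\beta\delta,\,4+\delta-\beta(\delta+4))$, which tends to $1$ as $\delta\to-2$, $\beta\to\tfrac12$ by \eqref{eq:fixbeta}.

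The main obstacle I expect is the careful exponent accounting in the gluing annulus: one must correctly combine (i) the decay rate $R^{-3}$ of the harmonic ALE forms, (ii) the cut-off derivative losses $b^{-1}$, (iii) the rescaling factors relating $\Box_t$ near the neck to $\Box^A_t$ (including the degree-dependent powers of $\epsilon$ from the homothety acting on forms, as in the parenthetical remark in the proof of Lemma~\ref{lemma:estid}), and (iv) the two weight normalizations ($\rho^{j-\delta}$ in the norm and $\epsilon^{-2-\delta}$ in $\|\cdot\|_{\delta,t}$). A secondary point is that near the neck the $(1,1)$-projection with respect to $J_t$ (as opposed to $J_0$ or $\Imod$) introduces extra terms; these are harmless because $|J_t-J_0|=\cO(\epsilon^2)$ there by Lemma~\ref{lemma:estid}, but one must check that differentiating the projection does not lose powers of $\rho$ beyond what the weighted norm absorbs. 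Once these powers are tabulated, the inequalities follow by the same contradiction-free direct estimate used throughout this section, and the final line is just the definition of $\beta_1$.
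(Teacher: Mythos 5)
Your overall approach — splitting the estimate into the inner ALE region, the gluing annulus, and the outer orbifold region, treating the cut-off commutators as the dominant error, and then reassembling via \eqref{eq:comparenorms} — is essentially identical to the paper's argument, so the strategy is sound. However, there is one concrete computational gap: you invoke Lemma~\ref{lemma:harmdecay} to say the harmonic ALE $(1,1)$-form decays like $R^{-3}$, and then assert without calculation that the bookkeeping ``produces the stated exponent $2-\beta(\delta+4)$'' for the $\Omega_A$-part. It does not. Carrying the computation through with $R^{-k}$ decay gives, on the neck $r\sim b=\epsilon^\beta$ with $R=r/\epsilon$: the pullback form has pointwise $h_t$-norm $\epsilon^{-2}\cO(R^{-k})=\cO(\epsilon^{k-2}r^{-k})$, two cut-off derivatives cost $\cO(r^{-2})$, and the weight $r^{2-\delta}$ then yields $\cO(\epsilon^{(k-2)-\beta(k+\delta)})$. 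With $k=3$ this is $\epsilon^{1-\beta(3+\delta)}$, not $\epsilon^{2-\beta(\delta+4)}$; for $\delta\to -2$, $\beta\to\frac12$ the exponent would tend to $\frac12$ rather than to $1$, and the claimed $\beta_1$ would have to be replaced by $\min(-\beta\delta,\,3+\delta-\beta(3+\delta))\approx\frac12$.

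The paper's own proof of Lemma~\ref{lemma:estilaplpsi} in fact uses the sharper estimate $\gamma_{A,t}=\cO(R^{-4})$ (i.e.\ $k=4$), which does give $\epsilon^{2-\beta(\delta+4)}$ and $\beta_1\approx 1$. Lemma~\ref{lemma:harmdecay} as stated only records $\cO(R^{-3})$, but the improvement to $R^{-4}$ is what is actually needed and is true here: for $\Gamma$ nontrivial the $R^{-3}$ terms in the expansion (coming from linear harmonic polynomials and their Kelvin transforms) are not $\Gamma$-invariant, so they vanish. Your proof should either replace the citation of Lemma~\ref{lemma:harmdecay} by this $\Gamma$-invariance argument giving $\cO(R^{-4})$, or it should acknowledge that it only obtains the weaker exponent $1-\beta(3+\delta)$ (which would still suffice for the later fixed-point scheme, since one only needs $\beta_1>0$, but does not match the Lemma as stated). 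Aside from this, the $\Omega_B$ and $1$-form exponents $-\beta\delta$ and $-\beta(\delta+1)$ that you derive from the cut-off derivative count are correct and agree with the paper.
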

\begin{proof}
  The proof is similar to the one for Lemma~\ref{lemma:estid} and
  Corollary \ref{cor:estidelta}. 
Let $\gamma_{A,t}\in \cH^{1,1}_{A,t}$ be a family of harmonic
$(1,1)$-forms on the family of ALE representing $\Omega_A$.
Then, we have uniform estimates $\gamma_{A,t}= \cO(R^{-4})$ on the
annulus $b/\epsilon \leq R\leq 4b/\epsilon$. Using the rescaled
metric, we deduce an estimate
$|\Psi_t(\Omega_A)|=\cO(\epsilon^2r^{-4})$ on the annulus $b\leq r\leq
4b$.
Hence $|\Box_t\Psi_t(\Omega_A)|=\cO(\epsilon^2r^{-6})$ so
$|r^{2-\delta}\Box_t\Psi_t(\Omega_A)|=\cO(\epsilon^2r^{-4-\delta})$,
that is to say
$|r^{2-\delta}\Box_t\Psi_t(\Omega_A)|=\cO(\epsilon^{2-\beta(4+\delta)})$
on the annulus. The first part of the lemma follows.

For the second part of the statement,
we start with the uniform estimate $\Psi_t(\Omega_B)=\cO(1)$ on the
annulus 
$b\leq r\leq 4b$. It follows that
$|r^{2-\delta}\Box_t\Psi_t{\Omega_B}|=\cO(\epsilon^{-\beta\delta})$ on
the annulus. 

On
the annulus $4b\leq r\leq 1$ we have the estimate
$|J_0-J_t|=\cO(R^{-4})=\cO(\epsilon^4r^{-4})$. It follows that
$|\Box_t\Psi_t(\Omega_B)|= \cO(\epsilon^4r^{-6})$ on this annulus.
Thus
$|r^{2-\delta}\Box_t\Psi_t(\Omega_B)|=\cO(\epsilon^{4-\beta(4+\delta)})$
on the annulus $4b\leq r\leq 1$. We see that $4-\beta(4+\delta)$ goes
to $3$ as $\beta$ is close to $-2$ and $\beta$ close to $1/2$.

On the compact part, we have an estimate  $J_0-J_t=\cO(\epsilon^2)$.
So the estimate $|\Box_t\Psi_t(\Omega_B)|=\cO(\epsilon^2)$ on the
compact domain follows. 

The second part of the lemma follows.
The third inequality is obvious.

For the last statement, we just notice that the same estimates hold in
the case of $1$-forms. We merely have to replace $\delta$ by
$\delta+1)$. So we have the estimate $\cO(\epsilon
^{-\beta(\delta+1)})$ on the annulus $b\leq r\leq 4b$, the estimate 
$\cO(\epsilon ^{4-\beta(5+\delta)})$ on the annulus $4b\leq r\leq 1$
and $\cO(\epsilon^2)$ on the compact domain.
\end{proof}

Let $P_t: \cH^{1,1}(\Xhat,J_t)\to \cK^{1,1}_t$ be the $L^2$-orthogonal
projection (deduced from $h_t$) on the space $\cK^{1,1}_t$. Similarly,
we denote $P_t: \cH^{0,1}(\Xhat,J_t)\to \cK^{0,1}_t$. This
projection is very close to the identity in the sense of the following
corollary.
\begin{cor}
  Suppose $-2<\delta<0$ with $\delta$ sufficiently close to $-2$.
  There exists a constant $c>0$ such that for all 
  $\gamma_t\in \cH^{1,1}(\Xhat,J_t)$
$$
\|\gamma_t - P_t(\gamma_t)\|_{C^{2,\alpha}_\delta}\leq c\epsilon^{\beta_1} \|P_t(\gamma_t)\|_{C^{2,\alpha}_\delta}.  
$$
Similarly, if $\gamma_t$ denote a family $\gamma_t\in
\cH^{0,1}(\Xhat,J_t)$ we have
$$
\|\gamma_t - P_t(\gamma_t)\|_{C^{2,\alpha}_{\delta+1}}\leq c\epsilon^{-\beta(\delta+1))} \|P_t(\gamma_t)\|_{C^{2,\alpha}_{\delta+1}}.  
$$
\end{cor}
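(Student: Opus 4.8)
The plan is to apply the uniform elliptic estimate of Proposition~\ref{prop:linearapprox} to the difference $\eta_t:=\gamma_t-P_t(\gamma_t)$, and then to bound the resulting term $\Box_t\eta_t$ using the almost-harmonicity of the approximate kernel furnished by Lemma~\ref{lemma:estilaplpsi}.

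First I would record that, since $\gamma_t$ is $\Box_t$-harmonic, $\Box_t\gamma_t=0$, so $\Box_t\eta_t=-\Box_t P_t(\gamma_t)$. Moreover $\eta_t$ is a $(1,1)$-form for $J_t$, because $P_t(\gamma_t)\in\cK^{1,1}_t\subset\Omega^{1,1}_{J_t}(\Xhat)$, and it is $L^2$-orthogonal to $\cK^{1,1}_t$ by the very definition of the $h_t$-orthogonal projection $P_t$. Hence the second inequality of Proposition~\ref{prop:linearapprox} applies to $\eta_t$ and gives
\[
c\,\|\eta_t\|_{C^{2,\alpha}_\delta(t)}\ \le\ \|\Box_t\eta_t\|_{C^{0,\alpha}_{\delta-2}(t)}\ =\ \|\Box_t P_t(\gamma_t)\|_{C^{0,\alpha}_{\delta-2}(t)}.
\]
Next I would estimate the right-hand side. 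Writing $P_t(\gamma_t)=\Psi_t(\Omega)$ with $\Omega=\Omega_A\oplus\Omega_B\in H^2(\widehat{\setC^2/\Gamma},\setC)\oplus H^{1,1}(\cX)$, Lemma~\ref{lemma:estilaplpsi} yields $c\,\|\Box_t\Psi_t(\Omega)\|_{C^{0,\alpha}_{\delta-2}}\le\epsilon^{\beta_1}\|\Omega\|_{\delta,t}$, while the lower bound in Lemma~\ref{lemma:estipsi} gives $\|\Omega\|_{\delta,t}\le c_1^{-1}\|\Psi_t(\Omega)\|_{C^{2,\alpha}_\delta(t)}=c_1^{-1}\|P_t(\gamma_t)\|_{C^{2,\alpha}_\delta(t)}$. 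Chaining these three inequalities and absorbing constants produces
\[
\|\eta_t\|_{C^{2,\alpha}_\delta(t)}\ \le\ c'\,\epsilon^{\beta_1}\,\|P_t(\gamma_t)\|_{C^{2,\alpha}_\delta(t)},
\]
which is the first assertion.

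For the $(0,1)$-form statement I would run the identical argument, now using the third inequality of Proposition~\ref{prop:linearapprox} (the version for $1$-forms, valid for $\delta$ near $-2$) in place of the second, the last estimate of Lemma~\ref{lemma:estilaplpsi}, which produces the factor $\epsilon^{-\beta(\delta+1)}$, and the $(0,1)$-part of Lemma~\ref{lemma:estipsi} to replace $\|\Xi\|_B$ by $\|\Psi_t(\Xi)\|_{C^{2,\alpha}_{\delta+1}(t)}=\|P_t(\gamma_t)\|_{C^{2,\alpha}_{\delta+1}(t)}$.

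I do not expect a genuine obstacle here: the corollary is an assembly of the two preceding results. The points needing care are only (i) checking that $\eta_t$ meets the hypotheses of Proposition~\ref{prop:linearapprox}, i.e. that it is of type $(1,1)$ and $L^2$-orthogonal to $\cK^{1,1}_t$, which is immediate from the definition of $P_t$; (ii) the translation, via Lemma~\ref{lemma:estipsi}, between the cohomology-class norm $\|\cdot\|_{\delta,t}$ in which Lemma~\ref{lemma:estilaplpsi} is phrased and the weighted H\"older norm of the representative $P_t(\gamma_t)$; and (iii) taking $\delta$ close enough to $-2$ that both Proposition~\ref{prop:linearapprox} and Lemma~\ref{lemma:estilaplpsi} apply simultaneously, in which case $\beta_1$ is automatically close to $1$ by its definition.
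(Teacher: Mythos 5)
Your proposal follows exactly the paper's proof: set $\eta_t=\gamma_t-P_t(\gamma_t)$, note it is of type $(1,1)$ for $J_t$, orthogonal to $\cK^{1,1}_t$, and satisfies $\Box_t\eta_t=\pm\Box_tP_t(\gamma_t)=\pm\Box_t\Psi_t(\Omega)$, then chain Proposition~\ref{prop:linearapprox}, Lemma~\ref{lemma:estilaplpsi}, and the lower bound of Lemma~\ref{lemma:estipsi}. The only difference from the paper is that you spell out the verification of the hypotheses and the norm translation explicitly, which the paper leaves implicit.
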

\begin{proof}
  We write $P_t(\gamma_t)=\Psi_t(\Omega_t)$ and consider the form
  $\eta_t=P_t(\gamma_t)-\gamma_t$. By definition $\eta_t$ is orthogonal to
  $\cK_t$ and $\Box_t\eta_t=\Box_tP_t(\gamma_t)=\Box_t\Psi_t(\Omega_t)$.  
Lemma~\ref{lemma:estilaplpsi} applied to $\Psi_t(\Omega_t)$ followed by 
Lemma~\ref{lemma:estipsi} and Proposition~\ref{prop:linearapprox} give the
  result.
\end{proof}

In conclusion, $P_t$ is an isomorphism and the operator norm
$\|P_t-\id\|_{C^{2,\alpha}_\delta}$ is $\cO(\epsilon^{\beta_1})$ (and
$\cO(\epsilon^{-\beta(\delta+1)})$ in the case of $1$-forms) and the proposition
below follows.
\begin{prop}
\label{prop:linear}
For all $k\geq 0$ there exists a constant $c>0$ such that for all $\delta\in
(-2,0)$ sufficiently close to $-2$, $t>0$ and every $(1,1)$-form $\beta$ orthogonal to harmonic forms
on $(\Xhat,J_t,h_t)$ , and we have
$$
c\|\beta\|_ {C^{2+k,\alpha}_{\delta}(t)}\leq
 \|\Box_t
\beta\|_{C^{k,\alpha}_{\delta-2}(t)} .
$$
In the case of $(0,1)$-forms orthogonal to harmonic forms, we have a similar estimate with
$$
c\|\beta\|_ {C^{2+k,\alpha}_{\delta+1}(t)}\leq
 \|\Box_t
\beta\|_{C^{k,\alpha}_{\delta-1}(t)} .
$$
\end{prop}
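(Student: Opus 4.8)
The plan is to derive Proposition~\ref{prop:linear} from Proposition~\ref{prop:linearapprox} in two moves: first upgrade the $k=0$ estimate so that orthogonality to the approximate kernel $\cK^{1,1}_t$ (resp.\ $\cK^{0,1}_t$) is replaced by orthogonality to the genuine kernel of $\Box_t$, namely the space $\cH^{1,1}(\Xhat,J_t,h_t)$ of harmonic forms on the compact manifold $\Xhat$; then bootstrap the a~priori estimate from $k=0$ to arbitrary $k\geq0$ by weighted elliptic regularity. The first move relies on the Corollary following Lemma~\ref{lemma:estilaplpsi} --- that $P_t$ restricts to an isomorphism $\cH^{1,1}(\Xhat,J_t)\to\cK^{1,1}_t$ which is $O(\epsilon^{\beta_1})$-close to the inclusion --- together with Lemmas~\ref{lemma:estipsi} and~\ref{lemma:estilaplpsi}; the second move is standard Schauder theory in the weighted spaces, using the uniform geometry assembled in \S\ref{sec:summary}.

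For $k=0$ and $(1,1)$-forms, let $\beta$ be $L^2(h_t)$-orthogonal to $\cH^{1,1}(\Xhat,J_t,h_t)$ and decompose $\beta = P_t\beta + (\beta-P_t\beta)$, where $P_t$ denotes the $L^2(h_t)$-orthogonal projection onto $\cK^{1,1}_t$, so $P_t\beta\in\cK^{1,1}_t$ and $\beta-P_t\beta$ is orthogonal to $\cK^{1,1}_t$. The second estimate of Proposition~\ref{prop:linearapprox}, applied to $\beta-P_t\beta$, gives
$$ c\|\beta-P_t\beta\|_{C^{2,\alpha}_\delta(t)} \leq \|\Box_t(\beta-P_t\beta)\|_{C^{0,\alpha}_{\delta-2}(t)} \leq \|\Box_t\beta\|_{C^{0,\alpha}_{\delta-2}(t)} + \|\Box_t P_t\beta\|_{C^{0,\alpha}_{\delta-2}(t)}, $$
while writing $P_t\beta=\Psi_t(\Omega)$ and combining Lemma~\ref{lemma:estilaplpsi} with Lemma~\ref{lemma:estipsi} yields $\|\Box_t P_t\beta\|_{C^{0,\alpha}_{\delta-2}(t)}\leq c\epsilon^{\beta_1}\|P_t\beta\|_{C^{2,\alpha}_\delta(t)}$. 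It remains to see that $P_t\beta$ is itself small compared with $\beta-P_t\beta$, and this is where the hypothesis $\beta\perp\cH^{1,1}$ enters: letting $\eta\in\cH^{1,1}(\Xhat,J_t)$ be the harmonic form with $P_t\eta=P_t\beta$ and expanding $0=\langle\beta,\eta\rangle_{L^2}$ along $\cK^{1,1}_t\oplus(\cK^{1,1}_t)^\perp$ gives $\|P_t\beta\|_{L^2}^2 = -\langle\beta-P_t\beta,\;\eta-P_t\eta\rangle_{L^2}$; estimating $\eta-P_t\eta$ by the Corollary and translating between $L^2(h_t)$ and the weighted Hölder norms on the finite-dimensional model spaces (the factor $\epsilon^{-2-\delta}$ built into $\|\cdot\|_{\delta,t}$ on the ALE factor being offset because $\delta$ is close to $-2$ and $\beta_1$ close to $1$) produces $\|P_t\beta\|_{C^{2,\alpha}_\delta(t)}\leq c\epsilon^{p}\|\beta-P_t\beta\|_{C^{2,\alpha}_\delta(t)}$ for some $p>0$. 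Substituting into the display and absorbing the $\epsilon$-powers for $t$ small gives $c\|\beta\|_{C^{2,\alpha}_\delta(t)}\leq\|\Box_t\beta\|_{C^{0,\alpha}_{\delta-2}(t)}$. The $(0,1)$-case is identical, using the third estimate of Proposition~\ref{prop:linearapprox}, the last estimate of Lemma~\ref{lemma:estilaplpsi} (with $\cH^{0,1}_{A,t}=0$, so $\cK^{0,1}_t$ comes only from $\cH^{0,1}_B$), and the corresponding part of the Corollary, all weights being shifted by~$1$.

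To pass from $k=0$ to general $k\geq0$ there is nothing specific to do: $\Box_t$ is a Laplace-type operator whose coefficients, in the three natural charts of \S\ref{sec:summary} (the orbifold region, the rescaled ALE bubble, and the neck), together with all their derivatives, are bounded uniformly in $t$ --- this follows from the asymptotics of $\gmod_{\epsilon^{-2}\zeta(t)}$ and $\Imod_{\epsilon^{-2}\zeta(t)}$ recalled there and from Lemma~\ref{lemma:estid}. Hence interior weighted Schauder estimates hold for $\Box_t$ with a constant independent of $t$:
$$ \|\beta\|_{C^{2+k,\alpha}_\delta(t)} \leq c\bigl( \|\Box_t\beta\|_{C^{k,\alpha}_{\delta-2}(t)} + \|\beta\|_{C^{0,\alpha}_\delta(t)} \bigr), $$
and likewise with $(\delta,\delta-2)$ replaced by $(\delta+1,\delta-1)$ for $(0,1)$-forms. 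Combining this with the $k=0$ estimate already proved, which controls $\|\beta\|_{C^{0,\alpha}_\delta(t)}\leq\|\beta\|_{C^{2,\alpha}_\delta(t)}\leq c\|\Box_t\beta\|_{C^{0,\alpha}_{\delta-2}(t)}\leq c\|\Box_t\beta\|_{C^{k,\alpha}_{\delta-2}(t)}$, gives the stated inequality.

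The delicate point is the norm bookkeeping in the second paragraph: one must check that the $O(\epsilon^{\beta_1})$-closeness of $\cH^{1,1}$ and $\cK^{1,1}_t$ supplied by the Corollary is strong enough, after translating between $L^2(h_t)$ and the $t$-dependent weighted Hölder norms, to make $P_t\beta$ genuinely negligible uniformly in $t$. Should this turn out to be awkward, one can bypass it entirely by rerunning the contradiction argument of Proposition~\ref{prop:linearapprox} with $\cK^{1,1}_t$ replaced throughout by $\cH^{1,1}(\Xhat,J_t)$: a hypothetical sequence $t_j\to0$ and $(1,1)$-forms $\beta_j$ with $\|\beta_j\|_{C^{0,\alpha}_\delta(t_j)}=1$, $\|\Box_{t_j}\beta_j\|_{C^{0,\alpha}_{\delta-2}(t_j)}\to0$ and $\beta_j\perp\cH^{1,1}(\Xhat,J_{t_j})$ produces, after the same extractions, a harmonic limit on the orbifold $\Xbar$ and a harmonic limit on the tangent graviton $Y_{\dot\zeta}$; since the harmonic spaces $\cH^{1,1}_B$ and $\cH^{1,1}_{A,t}$ have locally constant dimension (as used repeatedly in the lemmas above), these limits inherit orthogonality to $\cH^{1,1}_B$, resp.\ $\cH^{1,1}_{A,t}$, hence vanish, contradicting the normalization exactly as in the proof of Proposition~\ref{prop:linearapprox}.
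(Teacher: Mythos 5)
Your proposal is correct and matches the paper's (largely unspoken) approach: the paper merely asserts that Proposition~\ref{prop:linear} follows from the preceding corollary that $P_t$ is an isomorphism with $\|P_t-\id\|=\cO(\epsilon^{\beta_1})$, and you supply the details of that implication — both the direct orthogonal-decomposition argument and the more robust contradiction argument (rerunning Proposition~\ref{prop:linearapprox} with $\cK^{1,1}_t$ replaced by $\cH^{1,1}$), together with the weighted Schauder bootstrap for $k\geq 1$. The norm bookkeeping in your direct argument does work out (one finds the positive power $p=\beta_1-2(2+\delta)\to 1$ as $\delta\to-2$), and the fallback contradiction route sidesteps it cleanly, so both readings give the stated estimate.
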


\subsection{Background K\"ahler structure}
Recall that $(J_t,h_t)$  is a Hermitian structure on $\Xhat$. However
$h_t$ is not a priori K\"ahler. We shall look for a nearby Hermitian
metric which is K\"ahler.

For $t>0$, the K\"ahler form $\varpi_t$ of $h_t$ admits a decomposition of the
form
$$
\varpi_t= \varpi^H_t + \varpi^\perp_t
$$
where $\varpi^H_t$ is a $\Box_t$-harmonic $(1,1)$-form on
$(\Xhat,J_t)$
and $\varpi^\perp_t$ is 
$L^2$-orthogonal to $\Box_t$-harmonic forms.

From this point, one can prove that the following proposition
\begin{prop}
\label{prop:hermharm}
Assuming that $\delta\in(-2,0)$ and sufficiently close to $-2$, we have
$\|\varpi_t- \varpi^H_t \|_{C^{2,\alpha}_{\delta}(\Xhat,t)}= \cO(\epsilon^2)$.
\end{prop}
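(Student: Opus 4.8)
The plan is to recognize $\varpi_t-\varpi^H_t$ as the component $\varpi^\perp_t$ orthogonal to the $\Box_t$-harmonic forms and to estimate it directly from two facts already established: $\varpi_t$ is almost $\Box_t$-harmonic (Corollary~\ref{cor:estidelta}), and the uniform elliptic estimate of Proposition~\ref{prop:linear} bounds any $(1,1)$-form orthogonal to harmonic forms in terms of its Laplacian.

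By construction $\varpi_t-\varpi^H_t=\varpi^\perp_t$ is a $(1,1)$-form (the Dolbeault Laplacian $\Box_t$ preserves the $(p,q)$-type, so the splitting $\varpi_t=\varpi^H_t+\varpi^\perp_t$ can be carried out within $(1,1)$-forms even though $h_t$ is only Hermitian), it is $L^2(h_t)$-orthogonal to the space of $\Box_t$-harmonic forms on $(\Xhat,J_t,h_t)$, and, being smooth on the compact manifold $\Xhat$, it lies in every weighted space $C^{2,\alpha}_\delta(\Xhat,t)$. Moreover $\Box_t\varpi^H_t=0$, so $\Box_t\varpi^\perp_t=\Box_t\varpi_t$.

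Now fix $\delta\in(-2,0)$ close enough to $-2$ for both Corollary~\ref{cor:estidelta} and Proposition~\ref{prop:linear} to apply --- these impose compatible conditions, since Proposition~\ref{prop:linear} only requires $\delta\in(-2,0)$ avoiding the indicial roots (Lemma~\ref{lemma:iroot}) --- together with the gluing exponent $\beta=\frac{2}{2-\delta}$ of \eqref{eq:fixbeta}. Then Proposition~\ref{prop:linear} with $k=0$ applied to the $(1,1)$-form $\varpi^\perp_t$, combined with Corollary~\ref{cor:estidelta}, yields
$$
c\,\|\varpi_t-\varpi^H_t\|_{C^{2,\alpha}_\delta(\Xhat,t)}=c\,\|\varpi^\perp_t\|_{C^{2,\alpha}_\delta(t)}\leq\|\Box_t\varpi^\perp_t\|_{C^{0,\alpha}_{\delta-2}(t)}=\|\Box_t\varpi_t\|_{C^{0,\alpha}_{\delta-2}(t)}\leq C\epsilon^2
$$
uniformly in $t>0$, which is exactly the assertion.

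There is essentially no obstacle left: all the substance sits in the earlier lemmas, and the proof reduces to the three-line chain of inequalities above. The only point deserving care is the bookkeeping of the weight $\delta$ and the gluing scale $\beta$: the bound $\cO(\epsilon^2)$ in Corollary~\ref{cor:estidelta} was obtained precisely by letting $\delta\to-2$ with $\beta=\frac{2}{2-\delta}$, so one must confirm --- as indicated above --- that the range of $\delta$ for which that corollary holds still falls inside the range allowed by Proposition~\ref{prop:linear}. Once a single admissible $\delta$ is fixed, nothing further is needed.
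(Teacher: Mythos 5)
Your proof is correct and follows exactly the paper's own argument: decompose $\varpi_t=\varpi^H_t+\varpi^\perp_t$, note $\Box_t\varpi^\perp_t=\Box_t\varpi_t$, apply the uniform elliptic estimate of Proposition~\ref{prop:linear} to $\varpi^\perp_t$, and close with Corollary~\ref{cor:estidelta}. The extra care you take in checking compatibility of the range of $\delta$ is sound but the paper leaves it implicit.
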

\begin{proof}
  By definition, $\varpi_t- \varpi^H_t=\varpi^\perp_t$ and 
$\Box_t \varpi_t^\perp = \Box_t \varpi_t$.
by
  Proposition~\ref{prop:linear}, we have 
$$\|\varpi_t- \varpi^H_t
\|_{C^{2,\alpha}_{\delta}(\Xhat,t)}=\cO(\|\Box_t
\varpi_t\|_{C^{0,\alpha}_{\delta-2}(\Xhat,t)}) .
$$
The proposition follows from Corollary~\ref{cor:estidelta}.
\end{proof}

By definition $\delb_t\varpi_t^H=0$ but it is not a priori $d$-closed.  We
remedy to this problem with the following lemma
\begin{lemma}
\label{lemma:closedgamma}
  There exists a $(1,1)$-form $\gamma_t$ on $(\Xhat,J_t)$ in the cohomology
  class of $\varpi^H_t$ such that $d\gamma_t=0$ and $\|\gamma_t- \varpi^H_t
  \|_{C^{2,\alpha}_{\delta}(t)}= \cO(\epsilon^2)$.
\end{lemma}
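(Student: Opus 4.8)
The plan is to pin down the obstruction to $\varpi^H_t$ being $d$-closed, show it is $\cO(\epsilon^2)$ in a weighted norm, and then cancel it by a $\delb_t$-exact correction built by Hodge theory. Being $\Box_t$-harmonic, $\varpi^H_t$ is in particular $\delb_t$-closed, so its exterior derivative is the $(2,1)$-form $\mu_t:=d\varpi^H_t=\del_t\varpi^H_t$ on $(\Xhat,J_t)$, and $\mu_t$ is $d$-closed (indeed $d\mu_t=\delb_t\del_t\varpi^H_t=-\del_t\delb_t\varpi^H_t=0$). By Proposition~\ref{prop:hermharm}, $\|\varpi_t-\varpi^H_t\|_{C^{2,\alpha}_{\delta}(t)}=\cO(\epsilon^2)$, and since $d$ maps $C^{2,\alpha}_{\delta}(t)$ to $C^{1,\alpha}_{\delta-1}(t)$ with $t$-uniform norm (by the commensurability of the background metrics, \S\ref{sec:holder}), this gives $\|d\varpi_t-d\varpi^H_t\|_{C^{1,\alpha}_{\delta-1}(t)}=\cO(\epsilon^2)$; combined with Lemma~\ref{lemma:estid} we obtain $\|\mu_t\|_{C^{1,\alpha}_{\delta-1}(t)}=\cO(\epsilon^2)$. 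I will then take $\gamma_t:=\varpi^H_t+\delb_t\theta_t$ for a $(1,0)$-form $\theta_t$ with $\del_t\delb_t\theta_t=-\mu_t$; such a $\gamma_t$ is of type $(1,1)$, is $\delb_t$-closed (as $\delb_t\delb_t\theta_t=0$) and $\del_t$-closed (as $\del_t\gamma_t=\mu_t-\mu_t=0$), hence $d$-closed, and $\gamma_t-\varpi^H_t=\delb_t\theta_t$ is $\delb_t$-exact, so $\gamma_t$ represents the same Dolbeault class as $\varpi^H_t$.

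To produce $\theta_t$ I will use only two facts about the compact complex surface $(\Xhat,J_t)$ (in particular I do not use that the Hermitian metric $h_t$ is Kähler, which it is not). First, the Fr\"olicher spectral sequence of a compact complex surface degenerates at $E_1$, so $\del_t$ induces the zero map on $\delb_t$-cohomology; hence the $\delb_t$-closed form $\mu_t=\del_t\varpi^H_t$ is $\delb_t$-exact, say $\mu_t=\delb_t\nu_t$ with $\nu_t$ a $(2,0)$-form. Second, every $(2,0)$-form is $\del_t$-closed, and the natural map from holomorphic $2$-forms to $H^{2,0}_{\del_t}$ is an isomorphism: it is injective because a holomorphic $2$-form $\omega=\del_t\eta$ satisfies $\int_{\Xhat}\omega\wedge\overline\omega=\int_{\Xhat}d(\eta\wedge\overline\omega)=0$ (using that $\overline\omega$ is $d$-closed), hence vanishes, and both spaces have dimension $h^{2,0}=h^{0,2}$. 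Therefore $\nu_t=\del_t\theta_t+\omega_t$ for a $(1,0)$-form $\theta_t$ and a holomorphic $2$-form $\omega_t$, whence $\mu_t=\delb_t\nu_t=\delb_t\del_t\theta_t=-\del_t\delb_t\theta_t$, as required, and $\gamma_t=\varpi^H_t+\delb_t\theta_t$ is the desired closed $(1,1)$-form.

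For the $\cO(\epsilon^2)$ bound I will take the Hodge-theoretic solutions $\nu_t=\delb_t^*G_t\mu_t$ and $\theta_t=\del_t^*G'_t(\nu_t-\omega_t)$, where $G_t,G'_t$ are the Green operators of the $\delb_t$- and $\del_t$-Laplacians in bidegrees $(2,1)$ and $(2,0)$, and apply the weighted elliptic estimates of \S\ref{sec:rep} in these bidegrees. They are proved exactly as in the $(1,1)$- and $(0,1)$-form cases already treated: near the ALE bubble the canonical bundle of $\Xhat$ is trivial (since $\Gamma\subset\SU_2$, so $dz^1\wedge dz^2$ is $\Gamma$-invariant), and likewise $\Gamma$-equivariantly trivial at the orbifold point, so that $(2,1)$-forms identify with $(0,1)$-forms and $(2,0)$-forms with functions; the indicial-root analysis is then the same as in Lemma~\ref{lemma:iroot} — in particular, as $\Gamma$ is nontrivial there are no $\Gamma$-invariant harmonic linear forms, so the weight windows $\delta+1\in(-1,1)$ and $\delta+2\in(0,2)$ relevant here contain no indicial roots. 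Combining the resulting $t$-uniform estimates with $\|\mu_t\|_{C^{1,\alpha}_{\delta-1}(t)}=\cO(\epsilon^2)$ and the (finite-rank, $t$-uniform) boundedness of the projection $\nu_t\mapsto\omega_t$ onto holomorphic $2$-forms yields $\|\theta_t\|_{C^{3,\alpha}_{\delta+1}(t)}=\cO(\epsilon^2)$, and hence $\|\gamma_t-\varpi^H_t\|_{C^{2,\alpha}_{\delta}(t)}=\|\delb_t\theta_t\|_{C^{2,\alpha}_{\delta}(t)}=\cO(\epsilon^2)$. The main obstacle is making these elliptic estimates uniform in $t$ as the gluing neck degenerates — precisely what the weighted-space machinery of \S\ref{sec:holder}--\S\ref{sec:rep} is designed for — together with the point that, $h_t$ being merely Hermitian, the Kähler identities are unavailable, so both the solvability and the splitting $\nu_t=\del_t\theta_t+\omega_t$ must be obtained from the cohomological facts above rather than from $\Box_{\del_t}=\Box_{\delb_t}$. (One may add that $(\Xhat,J_t)$ is in fact Kähler, being diffeomorphic through the family $\cMhat\to\Delta_d$ to the Kähler resolution $\cXhat$; this pins down the de Rham class of $\gamma_t$ via the Hodge decomposition, should that be needed later.)
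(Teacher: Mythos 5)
Your proof follows the paper's argument essentially verbatim, up to complex conjugation: the paper solves $\del_t\varpi^H_t=\delb_t\alpha_t$ via Fr\"olicher degeneration, writes $\bar\alpha_t=\bar\mu_t+\delb_t\beta_t$ with $\mu_t$ holomorphic by the $\delb_t$-Hodge decomposition in bidegree $(0,2)$, and sets $\gamma_t=\varpi^H_t+\delb_t\bar\beta_t$; your $\nu_t,\theta_t,\omega_t$ are exactly $\alpha_t,\bar\beta_t,\mu_t$ rewritten on the $(p,0)$ side. The two cosmetic differences are that you observe Fr\"olicher degeneration directly from the compact-surface theorem rather than from K\"ahler-ness of $(\Xhat,J_t)$, and that the paper's estimate of $\beta_t$ goes through the identity $\Box_t\beta_t=\delb_t^*\bar\alpha_t$, which makes the bound on the projection $\nu_t\mapsto\omega_t$ you invoke unnecessary (though it is in fact free, since on a Hermitian surface a holomorphic $(2,0)$-form is $\del_t$-coclosed, so the Green operator annihilates $\omega_t$).
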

\begin{proof}
  Since $(\Xhat,J_t)$ is K\"ahler, the Fr\"olicher exact sequence
  degenerates at the first page. In particular $\del_{t}\varpi^H_t =
  \delb_{t}\alpha_t$, for $\alpha_t$ a $(2,0)$-form w.r.t $J_t$. Here we may
  choose a form such that $\delb^*_{t}\alpha_t=0$ and $\alpha_t$ is orthogonal
  to $\Box_t$-harmonic forms.

  So $\delb_t\bar\alpha_t=0$ because we are working in complex dimension
  $2$. So $\bar\alpha_t$ defines a class in $H^{0,2}(\Xhat,J_t)$ and we can
  write $\bar \alpha_t = \bar\mu_t + \delb_{t}\beta_t$, where $\mu_t$ is a
  holomorphic $(2,0)$-form, $\beta_t$ is a $(0,1)$-form orthogonal to
  harmonic forms which satisfies $\delb_t^*\beta_t=0$.  Put
  $\gamma_t=\varpi^H_t+\delb_{t}\bar\beta_t$. Then $\delb_{t}\gamma_t=\delb_t\varpi^H_t=0$ and
  $\del_{t}\gamma_t =\del_{t}\varpi^H_t + \del_{t}\delb_{t}\bar\beta_t=0 $.

  The next step is to estimate $\beta_t$. Using the identities
  $\delb_t^*\del_t\varpi_t^H = \delb_t^*\delb_t\alpha_t$ and $\delb_t^*\alpha_t=0$ we
  obtain $\delb_t^*\del_t\varpi_t^H=\Box_t \alpha_t$.

  Now
$$\|\delb_t^*\del_t(\varpi_t^H - \varpi_t)\|_{C^{0,\alpha}_{\delta-2}}=\cO(\epsilon^2)
$$
according to Proposition~\ref{prop:hermharm}.  Therefore
$$\|\delb_t^*\del_t\varpi_t^H \|_{C^{0,\alpha}_{\delta-2}}\leq
\|\delb_t^*\del_t\varpi_t^H \|_{C^{0,\alpha}_{\delta-2}} + \cO(\epsilon^2).
$$
Using Lemma~\ref{lemma:estid}
we obtain the estimate $\|\delb_t^*\del_t\varpi_t^H
\|_{C^{0,\alpha}_{\delta-2}}=\cO(\epsilon^2)$ and we deduce 
$$ \|\Box_t\alpha_t \|_{C^{0,\alpha}_{\delta-2}}=\cO(\epsilon^2). $$
Proposition \ref{prop:linear} provides the estimate 
\begin{equation}
  \label{eq:estialpha}
 \|\alpha_t\|_{C^{2,\alpha}_{\delta}}=\cO(\epsilon^2).
\end{equation}

Eventually, we would like to estimate the $(0,1)$-forms $\beta_t$.
We have $\delb^*_t\alpha_t = \delb^*_t\delb_t\beta_t=\Box_t\beta_t$
using the fact that $\bar\mu$ is harmonic and $\delb^*_t\beta_t=0$.

The estimate \eqref{eq:estialpha} provides an estimate
$\|\delb_t^*\alpha_t\|_{C^{1,\alpha}_{\delta-1}}=\cO(\epsilon^2)$
and it follows that 
$$\|\Box_t\beta_t\|_{C^{1,\alpha}_{\delta-1}}=\cO(\epsilon^2).$$
Using the fact that $\beta_t$ is orthogonal to harmonic forms and
Proposition~\ref{prop:linear}, we get the estimate
$$\|\beta_t\|_{C^{3,\alpha}_{\delta+1}}=\cO(\epsilon^2).$$
In particular this implies
$$\|\delb_t\bar\beta_t\|_{C^{2,\alpha}_{\delta}}=\cO(\epsilon^2)$$
which proves the lemma.
\end{proof}

Thus we define the real closed form $\omega'_t$ of type $(1,1)$ on $(\Xhat,J_t)$
by taking
$$
\omega'_t=\Re(\gamma_t)
$$
were $\gamma'_t$ is given by Lemma~\ref{lemma:closedgamma}.

\begin{cor}
\label{cor:bgmet}
Fix $-2<\delta<0$ sufficiently close to $-2$.  Define $\omega'_t$ to be
the real part of $\gamma_t$. Then for all 
$t>0$ sufficiently small, $\omega'_t$ defines a K\"ahler metric $g'_t$ on
$(\Xhat, J_t)$ such that $\|\varpi_t-\omega'_t\|_{C^{2,\alpha}_\delta(t)}=\cO(\epsilon^2)$.
\end{cor}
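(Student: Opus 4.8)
The plan is to deduce the quantitative estimate from the two preceding results, and then to upgrade it to the positivity of $\omega'_t$, which is the only genuinely new point; the closedness and the $(1,1)$-type of $\omega'_t$ were already recorded in the paragraph preceding the corollary (closedness from $d\gamma_t=0$ in Lemma~\ref{lemma:closedgamma} and the fact that $d$ commutes with complex conjugation; type $(1,1)$ because the conjugate of a $(1,1)$-form is again $(1,1)$).

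For the estimate, I would use that $\varpi_t$, being the fundamental form of the Hermitian metric $h_t$, is a real form, so $\varpi_t-\omega'_t=\Re(\varpi_t-\gamma_t)$. Splitting $\varpi_t-\gamma_t=(\varpi_t-\varpi_t^H)+(\varpi_t^H-\gamma_t)=\varpi_t^\perp-\delb_t\bar\beta_t$, Proposition~\ref{prop:hermharm} bounds the first term by $\cO(\epsilon^2)$ in $C^{2,\alpha}_\delta(t)$ and Lemma~\ref{lemma:closedgamma} bounds the second by $\cO(\epsilon^2)$ as well; since passing to the real part does not increase the weighted H\"older norm, $\|\varpi_t-\omega'_t\|_{C^{2,\alpha}_\delta(t)}=\cO(\epsilon^2)$.

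For positivity I would convert this weighted estimate into a uniform one. The weight $\rho$ is bounded below on $\Xhat$ by a fixed multiple of $\epsilon$: on the bubble region $\rho=\epsilon H_{\epsilon^{-1}}^*\rho_A\geq c\epsilon$; on the neck $\rho=r\geq b=\epsilon^\beta\geq\epsilon$ since $\beta<1$; on the orbifold part $\rho=\rho_B$ is bounded below. Hence the $j=0$ term of the norm yields the pointwise bound $|\varpi_t-\omega'_t|_{h_t}\leq C\epsilon^2\rho^\delta\leq C'\epsilon^{2+\delta}$, which tends to $0$ because $\delta>-2$. Under the correspondence between real $J_t$-invariant $(1,1)$-forms and $J_t$-compatible symmetric $2$-tensors (which is bounded both ways with universal constants in dimension four), $\varpi_t$ corresponds to $h_t$ itself and $\omega'_t$ to a symmetric tensor $g'_t$ with $|g'_t-h_t|_{h_t}=\cO(\epsilon^{2+\delta})$; therefore $g'_t$ is positive definite for $t$ small. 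Combining with the closedness and type of $\omega'_t$, the tensor $g'_t$ is a $J_t$-compatible Riemannian metric with closed fundamental form $\omega'_t$, i.e.\ a K\"ahler metric on $(\Xhat,J_t)$, and the displayed estimate is the one obtained above.

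The only step that requires any care is this passage from the weighted bound to a uniform $C^0$ bound small enough to preserve positivity; it works precisely because $2+\delta>0$ and $\rho\gtrsim\epsilon$, and everything else is bookkeeping with the triangle inequality and the lemmas already established.
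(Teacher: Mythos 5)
Your argument is correct and follows essentially the same route as the paper: the $C^{2,\alpha}_\delta$ estimate is an immediate triangle inequality combining Proposition~\ref{prop:hermharm} and Lemma~\ref{lemma:closedgamma} (the paper dispatches this with ``by definition''), and the positivity hinges on $2+\delta>0$. The one cosmetic difference is in how that last step is packaged: the paper rescales on each of the three regions and compares with the local model, yielding $\cO(\epsilon^{2+\delta})$ in the rescaled $C^{2,\alpha}_\delta$ norm region by region, whereas you extract a single global pointwise bound $|\varpi_t-\omega'_t|_{h_t}\le C\epsilon^2\rho^\delta\le C'\epsilon^{2+\delta}$ from the $j=0$ term of the norm and the uniform lower bound $\rho\gtrsim\epsilon$. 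Both lead to the same exponent and the same conclusion; your version is a slight streamlining since it avoids the case split, while the paper's phrasing carries a bit more information about the behaviour in each scaling regime, consistent with the way those rescaled comparisons are reused later in the argument.
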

\begin{proof}
By definition we have
$\|\varpi_t-\omega'_t\|_{C^{2,\alpha}_\delta(t)}=\cO(\epsilon^2)$.
So we only need to check that the estimate is good enough to ensure
that the real $(1,1)$-form $\omega'_t$ defines a metric.

On the domain $\rho\leq 4b$ of $\Uhat$, using the homothety
$H_{\epsilon^{-1}}$, we find an estimate
$$
\|\omod_{\epsilon^{-2}\eta(t)}-\epsilon^{-2}(H_{\epsilon^{-1}})_*\omega'_t\|_{C^{2,\alpha}_\delta(\rho_A\leq
  4\epsilon^{-\frac 12},t)}=\cO(\epsilon^{2+\delta})$$
Since $2+\delta>0$ the form $\omega'_t$ is definite positive for $t$
sufficiently small.
A similar estimate on the domains $4\epsilon^{\frac 12}\leq \rho\leq 1$ and
$\Xhat\setminus \Uhat$ proves the lemma.
\end{proof}

Although they need not agree, the K\"ahler class $[\omega'_t]$ is very close to $\Omega_t$ according
to the following result:
\begin{lemma}
\label{lemma:estikahlclass}
  The cohomology class $[\omega'_t]\in H^2(\Xhat,\RR)$ satisfies 
$$
\|\Omega_t-[\omega'_t]\|_{\delta,t}=\cO(\epsilon^2).
$$
\end{lemma}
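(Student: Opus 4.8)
The plan is to estimate the de Rham class $[\omega'_t]$ period by period, using the splitting $H^2(\Xhat,\setR)=H^2(\widehat{\setC^2/\Gamma},\setR)\oplus H^2(\Xbar,\setR)$ together with a dual choice of $2$-cycles: the exceptional $-2$-spheres $S_\theta\subset\Uhat$ indexed by a basis of simple roots, which generate the first summand in homology, and $2$-cycles $C$ contained in a fixed compact region $\{\rho\geq\rho_0\}$ of $X$, which generate the second. The only analytic input I will use is Corollary~\ref{cor:bgmet}, namely $\|\omega'_t-\varpi_t\|_{C^{2,\alpha}_\delta(\Xhat,t)}=\cO(\epsilon^2)$, where $\varpi_t$ is the K\"ahler form of the background Hermitian metric $h_t$. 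Since $\omega'_t$ is closed, it suffices to compute the periods of $\varpi_t$ over the two families of cycles, to control the error produced by this $\cO(\epsilon^2)$ perturbation, and then to weigh the resulting components of $\Omega_t-[\omega'_t]$ with the norm $\|\cdot\|_{\delta,t}$.

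First I would handle the exceptional spheres. Because $\epsilon^{-2}\zeta(t)\to\dot\zeta$ stays in a fixed compact subset of $\fh\oplus\fh_\setC$, the $-2$-spheres of the Kronheimer spaces $Y_{\epsilon^{-2}\zeta(t)}$ can be kept inside a fixed region $\{\rho_A\leq R_0\}$, so one may choose $S_\theta\subset\{r\leq \epsilon R_0\}\subset\{r\leq b\}$ for $t$ small. On $\{r\leq b\}$ the metric $h_t$ is K\"ahler and equals $\epsilon^2 H_{\epsilon^{-1}}^*\gmod_{\epsilon^{-2}\zeta(t)}$ — this is recalled in the proof of Lemma~\ref{lemma:estid} — so there $\varpi_t=\epsilon^2 H_{\epsilon^{-1}}^*\omod_{\epsilon^{-2}\zeta(t)}$; and $[\omod_{\epsilon^{-2}\zeta(t)}]=\epsilon^{-2}\zeta_r(t)$ in $H^2(\widehat{\setC^2/\Gamma})$ by property~\ref{sec:krogra}(iii). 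Hence
\[
\int_{S_\theta}\varpi_t=\epsilon^2\int_{H_{\epsilon^{-1}}(S_\theta)}\omod_{\epsilon^{-2}\zeta(t)}=\langle\zeta_r(t),\theta\rangle=\langle\Omega_t,S_\theta\rangle,
\]
the last equality because $\zeta_r(t)$ is by construction the $H^2(\widehat{\setC^2/\Gamma})$-component of $\Omega_t$. For the error I would transport everything by $H_{\epsilon^{-1}}$ and use the norm comparison~\eqref{eq:comparenorms} for a $2$-form: $(H_{\epsilon^{-1}})_*(\omega'_t-\varpi_t)$ has $C^{0,\alpha}_\delta(\widehat{\setC^2/\Gamma},t)$-norm $\cO(\epsilon^{2+\delta}\cdot\epsilon^2)=\cO(\epsilon^{4+\delta})$ on $\{\rho_A\leq 2b/\epsilon\}$, so its integral over the fixed-size sphere $H_{\epsilon^{-1}}(S_\theta)$ is $\cO(\epsilon^{4+\delta})$. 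Thus the $H^2(\widehat{\setC^2/\Gamma})$-component of $\Omega_t-[\omega'_t]$ has norm $\cO(\epsilon^{4+\delta})$, which contributes $\epsilon^{-2-\delta}\cdot\cO(\epsilon^{4+\delta})=\cO(\epsilon^2)$ to $\|\cdot\|_{\delta,t}$.

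Next the compact cycles. On $\{\rho\geq 2b\}$ the metric $h_t$ is built from $g_{B,t}$, which is $J_0$-K\"ahler with K\"ahler form $\omega_{B,t}$ representing the orbifold component of $\Omega_t$; and, by the estimates used in the proof of Lemma~\ref{lemma:estid}, $|J_t-J_0|=\cO(\epsilon^2)$ on the compact part (and $\cO(\epsilon^4\rho^{-4})$ on $\{4b\leq\rho\leq 1\}$), so $\varpi_t=\omega_{B,t}+\cO(\epsilon^2)$ uniformly on $\{\rho\geq\rho_0\}$. There $\rho^{-\delta}\asymp 1$, so the perturbation $\omega'_t-\varpi_t$ is also $\cO(\epsilon^2)$ pointwise on $\{\rho\geq\rho_0\}$, and therefore $\int_C\omega'_t=\langle\Omega_t,C\rangle+\cO(\epsilon^2)$. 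Hence the $H^2(\Xbar)$-component of $\Omega_t-[\omega'_t]$ is $\cO(\epsilon^2)$, contributing $\cO(\epsilon^2)$ with weight $1$; adding the two contributions yields $\|\Omega_t-[\omega'_t]\|_{\delta,t}=\cO(\epsilon^2)$.

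The one point that needs care is the accounting in the bubble region: the amplification by $\epsilon^{-2-\delta}$ of the $H^2(\widehat{\setC^2/\Gamma})$-component in $\|\cdot\|_{\delta,t}$ is exactly compensated, the factor $\epsilon^{2}$ coming from the size $\sim\epsilon$ of the exceptional spheres and the factor $\epsilon^{\delta}$ from the weight $\rho^\delta$ built into $C^{0,\alpha}_\delta$. Everything else is the comparison of $\varpi_t$ with the model K\"ahler forms on each piece, already contained in Lemma~\ref{lemma:estid} and Corollary~\ref{cor:bgmet}; in particular the exact identity $\int_{S_\theta}\varpi_t=\langle\Omega_t,S_\theta\rangle$ uses precisely that the background metric is genuinely K\"ahler on $\{r\leq b\}$ and the normalization $[\omod_\zeta]=\zeta_r$ of Kronheimer's construction.
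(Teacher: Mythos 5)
Your proposal is correct and takes essentially the same route as the paper: decompose $\Omega_t-[\omega'_t]$ into the ALE and orbifold components, feed the estimate $\|\omega'_t-\varpi_t\|_{C^{2,\alpha}_\delta(t)}=\cO(\epsilon^2)$ into each, note that the exceptional spheres give the $A$-component an extra gain (your rescaled-period argument is the paper's ``pointwise bound $\cO(\epsilon^{2+\delta})$ times $\varpi_t$-area $\cO(\epsilon^2)$'' in different coordinates, both giving $\cO(\epsilon^{4+\delta})$, which the weight $\epsilon^{-2-\delta}$ brings back to $\cO(\epsilon^2)$), and conclude. One small imprecision: $\omega_{B,t}$ represents $\Omega_0$, not the $B$-component $\Omega_{t,B}$ of $\Omega_t$ — so there is one more $\cO(\epsilon^2)$ step you silently use, namely $|\Omega_{t,B}-\Omega_0|=\cO(\epsilon^2)$, which the paper invokes explicitly ``by assumption'' (a consequence of the smoothness of $\Omega_t$ in $t$ on $\Delta_d$ together with $p\le\ordre$).
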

\begin{proof}
 We have the decompositions $\Omega_t=\Omega_{t,A}+\Omega_{t,B}$
  and $[\omega'_t]=[\omega'_t]_A+[\omega'_t]_B$ together with the estimate
  $\|\omega'_t-\varpi_t\|_{C^{2,\alpha}_\delta(t)}=\cO(\epsilon^2)$. On the ALE part, the form $\varpi_t$
  is closed and represents $\Omega_{A,t}$. From $|\omega'_t-\varpi_t|=\cO(\epsilon^{2+\delta})$
  and the fact that the spheres representing the homology classes have
  $\varpi_t$-volume $\cO(\epsilon^2)$, we deduce that
  $$ |[\omega'_t]_A-\Omega_{t,A}| = \cO(\epsilon^{4+\delta}). $$
  On the compact part, we have $|\omega'_t-\varpi_t| =\cO(\epsilon^2)$
  and by assumption
  $|\Omega_{t,B}-\Omega_0| =\cO(\epsilon^2)$. Therefore
  $$ |[\omega'_t]_B-\Omega_{t,B}| = \cO(\epsilon^2). $$
  These two estimates together prove the lemma.
\end{proof}

\subsection{Harmonic forms and K\"ahler form}
We shall now use the family of K\"ahler metrics $g'_t$ with K\"ahler form
$\omega'_t$ on $(\Xhat,J_t)$ provided by Corollary \ref{cor:bgmet} as our
favorite background metric for some $\delta\in (-2,0)$ sufficiently
close to $-2$. Its Laplacians will
be denoted as well $\Box_t$ and $\Delta_t=2\Box_t$.

The Mayer-Vietoris isomorphism $H^2(\Xhat)\to H^2(\Xbar)\oplus
H^2(\widehat{\setC^2/\Gamma})$, gives a corresponding decomposition $\Xi_A\oplus\Xi_B$
for each cohomology class $\Xi\in H^2(\Xhat)$.  The family of norms $\|\Xi_A\oplus
\Xi_B\|_{\delta,t}$ defined at \S\ref{sec:holder} provides a norm on $H^2(\Xhat)$ denoted
$\|\Xi\|_{\delta,t}$ as well.

Then we have the following result
\begin{prop}
\label{prop:equivnorm}
There are constants $c_1,c_2>0$ such that for every family $\gamma_t$ of
$g'_t$-harmonic $2$-forms on $\Xhat$ with cohomology class $\Xi_t$, we
have
$$
c_1 \|\gamma_t\|_{C^{0,\alpha}_{\delta}(t)} \geq \|\Xi_t\|_{t,\delta}\geq c_2 \|\gamma_t\|_{C^{0,\alpha}_{\delta}(t)}.
$$
\end{prop}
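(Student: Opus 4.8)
The plan is to compare the $g'_t$-harmonic representative $\gamma_t$ of $\Xi_t$ with the approximately harmonic representative built out of $\Psi_t$ in \S\ref{sec:holder}, and then to recover the cohomology class from integrals over $2$-cycles. Since $g'_t$ is K\"ahler, $\gamma_t$ decomposes into $g'_t$-harmonic pieces of types $(2,0)$, $(1,1)$, $(0,2)$, whose classes are the Hodge components of $\Xi_t$, and the pointwise type projections are uniformly bounded in $t$, so it is enough to bound $\|\gamma^{p,q}_t\|_{C^{0,\alpha}_\delta(t)}$ against $\|\Xi^{p,q}_t\|_{\delta,t}$ for each type. The $(2,0)$ and $(0,2)$ components have vanishing ALE part (since $\cH^{2,0}_{A,t}=\cH^{0,2}_{A,t}=0$) and are handled exactly as the $(0,1)$-case of Lemmas~\ref{lemma:estipsi}--\ref{lemma:estilaplpsi} and the corollary following the latter, so I take $\gamma_t$ to be a $g'_t$-harmonic $(1,1)$-form with class $\Xi_t=\Xi_{t,A}\oplus\Xi_{t,B}$.

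First I would pass to the approximate kernel. Write $\kappa_t=P_t\gamma_t=\Psi_t(\Omega_t)\in\cK^{1,1}_t$ with $\Omega_t=\Omega_{t,A}\oplus\Omega_{t,B}\in H^2(\widehat{\setC^2/\Gamma})\oplus H^{1,1}(\cX)$, and set $\eta_t=\gamma_t-\kappa_t$. By the corollary following Lemma~\ref{lemma:estilaplpsi} ($P_t$ being an isomorphism with $\|P_t-\id\|_{C^{2,\alpha}_\delta}=\cO(\epsilon^{\beta_1})$) together with Lemma~\ref{lemma:estipsi}, one has $\|\eta_t\|_{C^{2,\alpha}_\delta(t)}\le C\epsilon^{\beta_1}\|\Omega_t\|_{\delta,t}$, while $\|\gamma_t\|_{C^{0,\alpha}_\delta(t)}$, $\|\kappa_t\|_{C^{0,\alpha}_\delta(t)}$ and $\|\Omega_t\|_{\delta,t}$ are mutually commensurate, uniformly in $t$ (using the first part of Proposition~\ref{prop:linearapprox} to pass between the $C^{0,\alpha}_\delta$ and $C^{2,\alpha}_\delta$ norms of the harmonic $\gamma_t$). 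Thus the proposition reduces to showing that $\Xi_t$ and $\Omega_t$, matched via Mayer--Vietoris, satisfy $\|\Xi_t-\Omega_t\|_{\delta,t}\le C\epsilon^{\beta_1}\|\Omega_t\|_{\delta,t}$; for $t$ small this forces $\|\Xi_t\|_{\delta,t}$ and $\|\Omega_t\|_{\delta,t}$ to be commensurate, which is what we want.

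To obtain that estimate I would integrate $\gamma_t=\Psi_t(\Omega_t)+\eta_t$ over a basis of $2$-cycles adapted to the Mayer--Vietoris splitting. Over the exceptional $-2$-spheres $S_j\subset\Uhat$, which form a $\setZ$-basis of $H_2(\widehat{\setC^2/\Gamma})$ and lie where $\rho\sim\epsilon$ (hence, as $\beta<1$, well inside $\{\rho\le b\}$), the gluing formula gives $\Psi_t(\Omega_t)=H_{\epsilon^{-1}}^*\gamma_{A,t}$ exactly on $\{\rho\le b\}$ (the $J_t$-$(1,1)$-projection is trivial there, $J_t$ being the graviton complex structure for which $\gamma_{A,t}$ is of type $(1,1)$), so $\int_{S_j}\Psi_t(\Omega_t)=\langle\Omega_{t,A},[S_j]\rangle$; moreover $|\int_{S_j}\eta_t|\le\operatorname{area}_{g'_t}(S_j)\cdot\sup_{S_j}|\eta_t|=\cO(\epsilon^2)\cdot\cO(\epsilon^{\delta+\beta_1}\|\Omega_t\|_{\delta,t})$, the area bound following from $g'_t=\epsilon^2H_{\epsilon^{-1}}^*g_{A,t}+\cO(\epsilon^2)$ near $S_j$ and the boundedness of the $-2$-curve areas for the tangent graviton. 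Hence $\epsilon^{-2-\delta}|\Xi_{t,A}-\Omega_{t,A}|=\cO(\epsilon^{\beta_1}\|\Omega_t\|_{\delta,t})$. Over a basis of $2$-cycles $C$ of $H_2(\Xbar)$, which may be chosen inside $\{\rho\ge1\}$, the gluing formula gives there $\Psi_t(\Omega_t)=\gamma_B+\cO(\epsilon^2|\gamma_B|)$ ($\gamma_B$ a $\gbar$-harmonic representative of $\Omega_{t,B}$, the discrepancy coming from $|J_t-J_0|=\cO(\epsilon^2)$ on the compact part), whence $\int_C\Psi_t(\Omega_t)=\langle\Omega_{t,B},[C]\rangle+\cO(\epsilon^2\|\Omega_{t,B}\|)$, while $|\int_C\eta_t|\le\sup_C|\eta_t|\cdot\operatorname{area}_{g'_t}(C)=\cO(\epsilon^{\beta_1}\|\Omega_t\|_{\delta,t})$ because $\rho\ge1$ on $C$ and hence $|\eta_t|\le\|\eta_t\|_{C^{0,\alpha}_\delta(t)}$ there; thus $|\Xi_{t,B}-\Omega_{t,B}|=\cO(\epsilon^{\min(2,\beta_1)}\|\Omega_t\|_{\delta,t})$. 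Adding the two contributions gives $\|\Xi_t-\Omega_t\|_{\delta,t}=\cO(\epsilon^{\min(2,\beta_1)}\|\Omega_t\|_{\delta,t})$, the required bound (recall $\beta_1>0$).

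The conceptual crux is the passage from the analytic datum $\Omega_t=\Psi_t^{-1}P_t\gamma_t$ back to the topological datum $\Xi_t=[\gamma_t]$: because $\Psi_t(\Omega_t)$ is only approximately closed one cannot read off its class directly, and the comparison must go through integration over the exceptional spheres and the compact cycles, using the explicit shape of $\Psi_t(\Omega_t)$ on $\{\rho\le b\}$ and on $\{\rho\ge1\}$ furnished by the gluing construction. Once this is set up the remaining estimates are routine; the only quantitative input is that $\beta_1>0$ and $\beta<1$, which are already guaranteed by \eqref{eq:fixbeta} and the preceding lemmas, together with the geometric facts — $g'_t$-area of the spheres $\cO(\epsilon^2)$, coincidence $\Psi_t(\Omega_t)=H_{\epsilon^{-1}}^*\gamma_{A,t}$ near them, deviation $\cO(\epsilon^2|\gamma_B|)$ from $\gamma_B$ on the compact part — which are immediate from \S\ref{sec:holder} and from $J_t\to J_0$.
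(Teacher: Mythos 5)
Your proof is correct, and it takes a genuinely different route from the one in the paper. The paper argues by contradiction: assuming the second inequality fails along a sequence $t_j\to 0$, one normalizes $\|\gamma_{t_j}\|_{C^{0,\alpha}_\delta}=1$, uses the first part of Proposition~\ref{prop:linearapprox} to get a uniform $C^{2,\alpha}_\delta$ bound, and then extracts convergent subsequences exactly as in the proof of the second part of Proposition~\ref{prop:linearapprox} (on compacts of $X$, on the rescaled ALE, or on the cone). The harmonic limit must be nonzero somewhere, so either $\Xi_{B,t_j}$ or $\epsilon_j^{-2-\delta}\Xi_{A,t_j}$ has a nonzero limit, contradicting $\|\Xi_{t_j}\|_{\delta,t_j}\to0$; the other inequality is handled symmetrically. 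You instead give a direct, quantitative argument: compare the harmonic $\gamma_t$ with the approximate-kernel representative $\Psi_t(\Omega_t)=P_t\gamma_t$ (the corollary after Lemma~\ref{lemma:estilaplpsi} giving $\|\gamma_t-P_t\gamma_t\|_{C^{2,\alpha}_\delta}\lesssim\epsilon^{\beta_1}\|\Omega_t\|_{\delta,t}$ and Lemma~\ref{lemma:estipsi} giving $\|\Psi_t(\Omega_t)\|_{C^{0,\alpha}_\delta}\sim\|\Omega_t\|_{\delta,t}$), then recover $\Xi_t$ from $\Omega_t$ by pairing with the exceptional spheres $S_j$ (at $\rho\sim\epsilon$) and a basis of cycles $C$ in $\{\rho\ge1\}$. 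The resulting estimate $\|\Xi_t-\Omega_t\|_{\delta,t}=\cO(\epsilon^{\min(2,\beta_1)}\|\Omega_t\|_{\delta,t})$ then closes the argument. What your approach buys is an explicit rate of equivalence ($\cO(\epsilon^{\beta_1})$) rather than a mere qualitative statement; what the paper's approach buys is brevity and uniformity across all Hodge types, since the compactness argument makes no reference to the $(p,q)$-decomposition. Two small points worth tightening in your version: the initial reduction to pure Hodge types assumes uniform (in $t$) boundedness of the type decomposition both pointwise on forms and on cohomology with respect to $\|\cdot\|_{\delta,t}$ — true, but it should be said that the $(2,0)$ and $(0,2)$ parts have zero $A$-component so the $\epsilon^{-2-\delta}$ weight never enters for them; and after Corollary~\ref{cor:bgmet} the Laplacian $\Box_t$ in the proposition is the one of $g'_t$, whereas the approximate-kernel corollary was proved for the $h_t$-Laplacian, so one needs the (routine) remark that the two differ by $\cO(\epsilon^2)$ in $C^{2,\alpha}_\delta$ and the estimates transfer.
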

\begin{proof}
  The proof is done by contradiction. If the second inequality does
  not hold,
  there exists a family $t_j\to 0$ and $g'_{t_j}$-harmonic $2$-forms
  $\gamma_{t_j}$  such 
  that  $\|\Xi_{t_j}\|_{\delta,t_j}\to 0$ and
  $\|\gamma_{t_j}\|_{C^{0,\alpha}_{\delta}(t_j)}=1$.

  Elliptic regularity gives a uniforms $C^{2,\alpha}_\delta$ estimate on
  $\gamma_{t_j}$ by Proposition~\ref{prop:linearapprox}. Arguing  exactly as in the proof of of the
  second part of  Proposition~\ref{prop:linearapprox} by 
  extracting converging subsequences, we show that either
  \begin{enumerate}
  \item   $\Xi_{B,t_j}$ 
  converges to a non vanishing cohomology class in
  $H^2(\Xhat,\RR)$
\item or $\epsilon_j^{-2-\delta}\Xi_{A,t_i}$   converges to a non vanishing cohomology class in
  $H^2(\Xhat,\RR)$.
  \end{enumerate}
This contradicts the fact that $\lim\|\Xi_{t_j}\|_{\delta,t_j}=0$.

For the first part of the inequality, a similar proof gives the result.
\end{proof}

We do not control exactly the K\"ahler class $[\omega'_t]$ of the K\"ahler
metric $g'_t$ that was just constructed. We will construct a
nearby K\"ahler
metric $g_t$ with the K\"ahler class $\Omega_t$ by perturbing
$g'_t$.

\begin{prop}
\label{prop:omegap}
For every $t>0$ sufficiently small,
the $g'_t$-harmonic representative $\omega_t$ of the K\"ahler class $\Omega_t$
defines a K\"ahler metric $g_t$, satisfying the estimate
$$ \| \omega_t-\omega'_t \|_{C^{2,\alpha}_\delta(t)} = \cO(\epsilon^2). $$
\end{prop}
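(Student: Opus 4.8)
The plan is to identify the $g'_t$-harmonic representative $\omega_t$ of $\Omega_t$ as a perturbation of the background form $\omega'_t$ of Corollary~\ref{cor:bgmet} of size $\cO(\epsilon^2)$, and then to deduce positivity as in the proof of that corollary. The first observation is that $\omega'_t$ is itself $g'_t$-harmonic: on the compact complex surface $\Xhat$ its K\"ahler form is closed and, being a K\"ahler form in complex dimension two, self-dual, so $d^*\omega'_t=-*d*\omega'_t=-*d\omega'_t=0$ and $\Delta_t\omega'_t=0$; hence $\omega'_t$ is the unique $g'_t$-harmonic representative of $[\omega'_t]$. Both $\Omega_t$ and $[\omega'_t]$ are $(1,1)$-classes for $J_t$, so $\Xi_t:=\Omega_t-[\omega'_t]$ is a $(1,1)$-class and its $g'_t$-harmonic representative $\gamma_t$ is of type $(1,1)$ with $\Box_t\gamma_t=0$. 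By linearity of the harmonic projection, $\omega_t=\omega'_t+\gamma_t$; in particular $\omega_t$ is of type $(1,1)$, and the statement reduces to showing $\|\gamma_t\|_{C^{2,\alpha}_\delta(t)}=\cO(\epsilon^2)$.

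To get this bound I would chain the estimates already established. Lemma~\ref{lemma:estikahlclass} gives $\|\Xi_t\|_{\delta,t}=\cO(\epsilon^2)$; since $\gamma_t$ is a $g'_t$-harmonic $2$-form whose cohomology class is $\Xi_t$, Proposition~\ref{prop:equivnorm} yields $\|\gamma_t\|_{C^{0,\alpha}_\delta(t)}=\cO(\epsilon^2)$. As $\Box_t\gamma_t=0$, the first inequality of Proposition~\ref{prop:linearapprox} (used with the background metric $g'_t$, which is uniformly commensurate with $h_t$) applied to the $(1,1)$-form $\gamma_t$ then upgrades this to $\|\gamma_t\|_{C^{2,\alpha}_\delta(t)}\le c^{-1}\|\gamma_t\|_{C^{0,\alpha}_\delta(t)}=\cO(\epsilon^2)$, that is $\|\omega_t-\omega'_t\|_{C^{2,\alpha}_\delta(t)}=\cO(\epsilon^2)$.

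Finally I would verify that $\omega_t$ is positive, hence the K\"ahler form of a metric $g_t$ on $(\Xhat,J_t)$, repeating the argument from the proof of Corollary~\ref{cor:bgmet}. On the domain $\rho\le 4b$ of $\Uhat$, transporting the previous estimate through the homothety $H_{\epsilon^{-1}}$ gives $\|\omod_{\epsilon^{-2}\zeta(t)}-\epsilon^{-2}(H_{\epsilon^{-1}})_*\omega_t\|_{C^{2,\alpha}_\delta(\rho_A\le 4b/\epsilon,\,t)}=\cO(\epsilon^{2+\delta})$, which tends to $0$ since $2+\delta>0$; thus $\omega_t$ is positive near the exceptional divisor, and the comparison $\|\omega_t-\omega'_t\|_{C^{2,\alpha}_\delta(t)}=\cO(\epsilon^2)$ with the positive form $\omega'_t$ settles the regions $4b\le\rho\le 1$ and $\Xhat\setminus\Uhat$.

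I expect this last positivity step to be the only delicate point: a $C^{2,\alpha}_\delta$-bound allows growth of order $\rho^\delta$, and $\rho$ is as small as $b=\epsilon^\beta$ near the exceptional divisor, so the $\cO(\epsilon^2)$ smallness of $\omega_t-\omega'_t$ does not by itself keep $\omega_t$ positive there --- this is precisely why $\delta$ must be taken in $(-2,0)$ close to $-2$ (so that $2+\delta>0$ and $\beta$ is close to $\tfrac12$) and why one passes to the rescaled comparison with the ALE model $\omod$. Everything else is a direct consequence of Lemma~\ref{lemma:estikahlclass} and Propositions~\ref{prop:equivnorm} and~\ref{prop:linearapprox}.
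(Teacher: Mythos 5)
Your proof is correct and follows essentially the same approach as the paper: chain Lemma~\ref{lemma:estikahlclass} with Proposition~\ref{prop:equivnorm} and elliptic regularity to get the $C^{2,\alpha}_\delta$ bound, then check positivity as in Corollary~\ref{cor:bgmet}. You make explicit a couple of steps the paper's terse write-up leaves implicit --- notably that $\omega'_t$ is itself $g'_t$-harmonic (closed and self-dual in complex dimension two), so $\omega_t-\omega'_t$ is precisely the $g'_t$-harmonic representative of $\Omega_t-[\omega'_t]$ --- which is a useful clarification.
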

\begin{proof}
  According to Lemma~\ref{lemma:estikahlclass}
  \begin{equation}
   \| \Omega_t - [\omega_t] \|_{\delta,t} = \cO(\epsilon^2).\label{eq:7}
  \end{equation}
Then by Proposition~\ref{prop:equivnorm}
  one has $\|\omega'_t-\omega_t\|_{C^{2,\alpha}_\delta(t)}=\cO(\epsilon^2)$. The worst value of the
  weight is $\epsilon^{-\delta}$, so it implies $\epsilon^{-\delta}|\omega'_t-\omega_t|=\cO(\epsilon^2)$, that
  is
  $$ |\omega'_t-\omega_t|=\cO(\epsilon^{2+\delta}). $$
  Since $-2<\delta<0$, this goes to zero and $\omega'_t$ is a K\"ahler form for
  $t$ small enough. The proposition follows.
\end{proof}

We summarize the results of the current section in the following
theorem
\begin{theo}
  \label{theo:kahlmet}
  Let $\cX\hkto\cM\to \Delta$ be a family of deformations of a compact complex
  surfaces with canonical singularities.  Let $\ombar$ be an orbifold
  K\"ahler metric on $\cX$ with K\"ahler class $\Omega_0$ and $\Omega$ a family of K\"ahler
  classes supported by a simultaneous resolution $\cXhat\to\cMhat\to \Delta_d$
  degenerating toward $\Omega_0$, such that the variation of K\"ahler class and
  complex structure non degenerate.

  Let $\phi:\Delta_d\times \Xhat\to \cMhat$ be a smooth trivialization of the family and
  $h_t$ the family of Hermitian metrics on $(\Xhat,J_t)$ with K\"ahler form
  $\varpi_t$ constructed at \S\ref{sec:bg} for all $t\in\Delta_d\cap \RR^+$ sufficiently
  small.

Then,  there
exists a family of
K\"ahler metrics $g_t$ with K\"ahler form $\omega_t$ on $(\Xhat,J_t)$
defined for $t\in \Delta_d\cap \RR^+$ for all $t$ sufficiently small 
with
the property that
\begin{itemize}
\item $[\omega_t]=\Omega_t$
\item for all $\delta\in (-2,0)$ sufficiently close to $-2$, we have $\|\omega_t-\varpi_t\|_{C^{2,\alpha}_{\delta}}= \cO(\epsilon^2)$.
\end{itemize}
\end{theo}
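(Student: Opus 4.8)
The plan is to assemble the estimates established throughout the section into a single two-stage perturbation argument: starting from the background Hermitian form $\varpi_t$ produced in \S\ref{sec:bg}, first correct it to a closed (hence K\"ahler) form $\omega'_t$, then correct $\omega'_t$ to a form $\omega_t$ lying in the prescribed class $\Omega_t$, keeping all corrections of size $\cO(\epsilon^2)$ in $C^{2,\alpha}_\delta$ uniformly in $t$.

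First I would recall that the Hermitian form $\varpi_t$ of $h_t$ is almost K\"ahler: by Corollary~\ref{cor:estidelta} one has $\|\Box_t\varpi_t\|_{C^{0,\alpha}_{\delta-2}(t)}=\cO(\epsilon^2)$ for $\delta$ close to $-2$. Decomposing $\varpi_t=\varpi^H_t+\varpi^\perp_t$ into its $\Box_t$-harmonic part and an $L^2$-orthogonal remainder (which satisfies $\Box_t\varpi^\perp_t=\Box_t\varpi_t$), Proposition~\ref{prop:linear} gives $\|\varpi_t-\varpi^H_t\|_{C^{2,\alpha}_\delta(t)}=\cO(\epsilon^2)$, i.e.\ Proposition~\ref{prop:hermharm}. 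The harmonic form $\varpi^H_t$ is $\delb_t$-closed but not $d$-closed; Lemma~\ref{lemma:closedgamma}, using the Fr\"olicher degeneration on $(\Xhat,J_t)$ together with the same elliptic estimates to control the auxiliary $(2,0)$- and $(0,1)$-forms, produces a genuinely closed $(1,1)$-form $\gamma_t$ cohomologous to $\varpi^H_t$ with $\|\gamma_t-\varpi^H_t\|_{C^{2,\alpha}_\delta(t)}=\cO(\epsilon^2)$. Setting $\omega'_t=\Re\gamma_t$ and using Corollary~\ref{cor:bgmet}, the triangle inequality yields $\|\varpi_t-\omega'_t\|_{C^{2,\alpha}_\delta(t)}=\cO(\epsilon^2)$; since $\delta>-2$ the pointwise deviation is $\cO(\epsilon^{2+\delta})\to 0$, so $\omega'_t$ is a K\"ahler form for small $t$.

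Next I would replace $\omega'_t$ by a form in the exact class $\Omega_t$. By Lemma~\ref{lemma:estikahlclass} the classes are close, $\|\Omega_t-[\omega'_t]\|_{\delta,t}=\cO(\epsilon^2)$, so taking $g'_t$ as the new background metric, the $g'_t$-harmonic representative $\omega_t$ of $\Omega_t$ differs from $\omega'_t$ by a harmonic form whose cohomology class is $\cO(\epsilon^2)$ in $\|\cdot\|_{\delta,t}$; Proposition~\ref{prop:equivnorm} then bounds $\|\omega_t-\omega'_t\|_{C^{2,\alpha}_\delta(t)}=\cO(\epsilon^2)$, which is Proposition~\ref{prop:omegap}. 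Again the worst weight is $\epsilon^{-\delta}$, so $|\omega_t-\omega'_t|=\cO(\epsilon^{2+\delta})\to 0$ and $\omega_t$ is positive, defining a K\"ahler metric $g_t$ with $[\omega_t]=\Omega_t$ by construction. A final application of the triangle inequality, $\|\omega_t-\varpi_t\|_{C^{2,\alpha}_\delta(t)}\le\|\omega_t-\omega'_t\|_{C^{2,\alpha}_\delta(t)}+\|\omega'_t-\varpi_t\|_{C^{2,\alpha}_\delta(t)}=\cO(\epsilon^2)$, gives the second bullet, and the theorem follows.

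The one point that requires care — and is really the content of the section rather than this final assembly — is that all of these $\cO(\epsilon^2)$ estimates are \emph{uniform in $t$}, which rests on the uniform elliptic estimate of Proposition~\ref{prop:linear} and its blow-up proof. In the final assembly the delicate choice is to fix a single $\delta\in(-2,0)$ close enough to $-2$ so that \emph{simultaneously} (i) $\beta=\frac{2}{2-\delta}$ lies in $(\tfrac12,1)$ as in \eqref{eq:fixbeta}, making the gluing-region error terms in Lemma~\ref{lemma:estid} and Lemma~\ref{lemma:estilaplpsi} genuinely $\cO(\epsilon^2)$; and (ii) $2+\delta>0$, so that $C^{2,\alpha}_\delta$-closeness upgrades to pointwise closeness and positivity of the perturbed forms is preserved. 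Once such a $\delta$ is fixed, the chain of estimates above goes through verbatim.
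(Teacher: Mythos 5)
Your proposal reconstructs exactly the paper's argument: the theorem is presented in the paper explicitly as a summary of the section, and its proof is precisely the chain you describe — Corollary~\ref{cor:estidelta} and Proposition~\ref{prop:linear} yielding Proposition~\ref{prop:hermharm}, then Lemma~\ref{lemma:closedgamma} and Corollary~\ref{cor:bgmet} to get the closed positive form $\omega'_t$, then Lemma~\ref{lemma:estikahlclass}, Proposition~\ref{prop:equivnorm} and Proposition~\ref{prop:omegap} to pass to the $g'_t$-harmonic representative of $\Omega_t$, finishing with the triangle inequality. Your closing remark about fixing a single $\delta$ close to $-2$ so that $\beta=\frac{2}{2-\delta}\in(\tfrac12,1)$ and $2+\delta>0$ hold simultaneously is the same bookkeeping the paper performs via \eqref{eq:fixbeta} and the positivity checks in Corollary~\ref{cor:bgmet} and Proposition~\ref{prop:omegap}.
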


\begin{proof}[Proof of Proposition~\ref{prop:kahlmet}]
Since $h_t$ converges smoothly toward $\gbar$ on every compact set of
$X$, the proposition is an immediate corollary of Theorem~\ref{theo:kahlmet}.
\end{proof}

\section{CSCK metrics}\label{sec:csck-metrics}
Let $\cX\hkto \cM\to\Delta$ be a flat deformation and $\cXhat\hkto\cMhat\to
\Delta_d$ a simultaneous resolution after passing to a ramified
cover. Suppose that $\cX$ is endowed with a CSCK orbifold metric
$\gbar$ with K\"ahler class $\Omega_0$. Let $\Omega$ be a family of K\"ahler classes
supported by the simultaneous resolution degenerating toward $\Omega_0$, with
non degenerate variation at each singularity.

Let $g_t$ be the family of K\"ahler metrics on $\cM_t$ with K\"ahler
class $\Omega_t$ obtained in \S~\ref{sec:rep}. In this section, we show that
$g_t$ can be perturbed into a CSCK metric. This kind of result is now
well-known by the work of Arezzo-Pacard, see also Sz\'ekelyhidi whose
proof is closer to ours. Our setting is slightly different, because
we vary the complex structure as well. Therefore we shall give quickly
some steps of the proof, but omit most technical proofs since they are
similar to that in the literature.

\subsection{Scalar curvature estimates}
We begin by the following estimate on the scalar curvature of the
K\"ahler metric $g'_t$.
\begin{prop}
\label{prop:estiscal}
Let $\kappa$ be the (constant) scalar curvature of the orbifold K\"ahler
metric on $\Xbar$. For all $\delta\in (-2,0)$, the K\"ahler metric
$g'_t$ with K\"ahler class $\Omega_t$ 
satisfies the estimate
$$
\|\scal(g'_t) -\kappa\|_{C^{0,\alpha}_{\delta-2}(t)}=  \cO(\epsilon^{2}).
$$
\end{prop}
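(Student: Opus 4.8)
The plan is to compare the K\"ahler metric $g'_t$ with the background metrics of \S\ref{sec:bg}, whose scalar curvature is essentially known, and to transfer the comparison through the scalar curvature operator in the weighted H\"older spaces. By Theorem~\ref{theo:kahlmet} and Corollary~\ref{cor:bgmet}, $\|\varpi_t-\omega'_t\|_{C^{2,\alpha}_\delta(t)}=\cO(\epsilon^2)$; since $\tilde h_t$ and $h_t$ coincide where $J_t=J_0$ and differ by $\cO(\epsilon^4 r^{-4})$ on $\{\rho\geq b\}$, the Riemannian metric $g'_t$ differs from $\tilde h_t$ by $\cO(\epsilon^2)$ in $C^{2,\alpha}_\delta(t)$. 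The scalar curvature $g\mapsto\scal(g)$ is a quasilinear second order operator, and using the uniform bounded geometry (in the rescaled sense) of the models $\gmod_{\epsilon^{-2}\zeta(t)}$, $\gbar$ and of the glued metrics established throughout \S\ref{sec:rep}, it is Lipschitz on a fixed $C^{2,\alpha}_\delta$-neighbourhood of the backgrounds, uniformly in $t$:
$$ \|\scal(g_1)-\scal(g_2)\|_{C^{0,\alpha}_{\delta-2}(t)}\leq C\,\|g_1-g_2\|_{C^{2,\alpha}_\delta(t)} .$$
So it is enough to estimate $\scal(\tilde h_t)-\kappa$ in $C^{0,\alpha}_{\delta-2}(t)$.

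I would then decompose $\Xhat$ into the ALE core $\{\rho\leq b\}$, the gluing annulus $\{b\leq\rho\leq 4b\}$, and the outer part $\{\rho\geq 4b\}$. On the outer part $\tilde h_t=g_{B,t}$ equals $\ombar$ away from the singularity, and on $\{2b\leq r\leq 4b\}$ it is $J_0$-K\"ahler with potential $r^2+\chi_{(2b,4b)}(r)\eta$, $\eta=\cO(r^4)$; hence $\scal(g_{B,t})=\kappa$ outside $\{r\leq 4b\}$ and $\scal(g_{B,t})=\cO(1)$ on that annulus, where $\rho\simeq b$ so the weight $\rho^{2-\delta}$ is $\cO(b^{2-\delta})=\cO(\epsilon^{\beta(2-\delta)})$; together with $|J_t-J_0|=\cO(\epsilon^4 r^{-4})$ on $\{4b\leq r\leq 1\}$ and $\cO(\epsilon^2)$ on the compact region, this gives $\|\scal(g'_t)-\kappa\|_{C^{0,\alpha}_{\delta-2}(t)}=\cO(\epsilon^2)$ over the outer part. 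On the core $\tilde h_t=\epsilon^2 H_{\epsilon^{-1}}^*\gmod_{\epsilon^{-2}\zeta(t)}$ is Ricci-flat, hence scalar-flat, so $\scal(g'_t)=\cO(\epsilon^2)$ there by the Lipschitz bound, and then $\scal(g'_t)-\kappa=-\kappa+\cO(\epsilon^2)$, the constant $\kappa$ contributing only $|\kappa|\sup_{\rho\leq b}\rho^{2-\delta}=|\kappa|\,b^{2-\delta}=\cO(\epsilon^2)$ — precisely because $\beta$ was chosen so that $b^{2-\delta}=\epsilon^2$. Finally, on the gluing annulus one uses the expansion $\gmod_{\epsilon^{-2}\zeta(t)}=\geuc+\xi_t$ with $\xi_t=\cO(R^{-4})$ (\S\ref{sec:krogra}(ii)) and the cutoff scales to bound $\scal(\tilde h_t)$ pointwise by a fixed power of $\epsilon$ which, multiplied by $\rho^{2-\delta}=\cO(b^{2-\delta})$, is $\cO(\epsilon^2)$ for $\delta$ close enough to $-2$ (equivalently $\beta$ close enough to $\frac12$); the $J_0$-side of the annulus is covered by the bound $\scal(g_{B,t})=\cO(1)$ above.

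The main obstacle is the gluing annulus: there $g'_t$ genuinely interpolates between a scalar-flat ALE model and the scalar-$\kappa$ orbifold metric, so it is not $C^{2,\alpha}$-close to any metric of constant scalar curvature; the estimate survives only by combining the $\cO(R^{-4})$ decay rate of Kronheimer's metrics with the balance $b^{2-\delta}=\epsilon^2$ forced by the choice \eqref{eq:fixbeta} of $\beta$, so that a scalar curvature which is not small pointwise is nonetheless small in the weighted norm. The other point requiring care is the $t$-uniformity of the Lipschitz bound for $\scal$, which rests on the uniformity of all the geometric estimates of \S\ref{sec:rep}.
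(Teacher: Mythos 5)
Your proof is correct and follows essentially the same route as the paper. The paper first reduces $\scal(g'_t)$ to $\scal(h_t)$ using Proposition~\ref{prop:omegap} and Corollary~\ref{cor:bgmet}, then estimates $\scal(\tilde h_t)-\kappa$ region by region ($\scal=0$ on the ALE core, $\scal=\kappa$ far from the singularity, $\cO(1)$ and the Kronheimer $\cO(R^{-4})$ decay on the annuli, with the balance $b^{2-\delta}=\epsilon^2$ coming from \eqref{eq:fixbeta}), and finally absorbs the $h_t$ vs.\ $\tilde h_t$ discrepancy via $|J_t-J_0|$; you fold the last step into the initial comparison and work with $\tilde h_t$ directly, but the decomposition, the use of Ricci-flatness, the $\cO(R^{-4})$ decay and the choice of $\beta$ are the same key ingredients. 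A minor remark: you are right to insist on $\delta$ close to $-2$ — the annular contribution from the ALE side is $\cO(\epsilon^{4-\beta(4+\delta)})$, which only dominates $\epsilon^2$ for $\delta$ near $-2$, so the paper's wording ``for all $\delta\in(-2,0)$'' in the statement is slightly too optimistic, whereas your constraint matches what the computation actually gives.
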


\begin{proof}
As an immediate consequence of Proposition~\ref{prop:omegap} and
Corollary \ref{cor:bgmet}, we have the estimate
$$
\|\scal(g'_t) -\scal (h_t)\|_{C^{0,\alpha}_{\delta-2}(t)}=  \cO(\epsilon^2).
$$
Then the proposition will be the consequence of the estimate
\begin{equation}
\|\scal(h_t) -\kappa \|_{C^{0,\alpha}_{\delta-2}(t)}
  =  \cO(\epsilon^{2}),\label{eq:1}
\end{equation}
that we now prove.

By construction $\scal(g_B)=\kappa$ on the domain $\rho_B\geq 4b$ and
$\scal(g_B)= \cO(1)$ on the annulus $2b\leq r \leq 4b$.  On the other hand
$\scal(g_{A,t})=0$ on the domain $R\leq b/\epsilon$ and
$\scal(g_{A,t})=\cO(\epsilon ^{6}b^{-6})$
on the annulus $b/\epsilon\leq R\leq 2b/\epsilon$.  Hence the metric
$\tilde h_t$ obtained 
by gluing together $g_A$ and $g_B$ satisfies
$|\scal(\tilde h_t)|=\cO(\epsilon^{4}b^{-6})+\cO(1)$ on the annulus $b\leq r\leq
2b$. Therefore
$r^{2-\delta}|\scal(\tilde h_t)|=\cO(\epsilon^{4}b^{4-\delta})+\cO(b^{2-\delta})=
\cO(\epsilon^{4-\beta(2+\delta)})+ \cO(\epsilon^{\beta(2-\delta)})$ on
the annulus.  By definition $\beta\delta=2$, hence
$$
\rho^{2-\delta}|\scal(\tilde h_t)-\kappa|= \cO(\epsilon^{4-\beta(2+\delta)})+ \cO(\epsilon^2)
$$
on the annulus $b\leq\rho\leq 4b$ which give an estimate
$$
\|\scal(\tilde h_t)-\kappa\|_{C^{0,\alpha}_{\delta-2}(t)}=
\cO(\epsilon^2)  
$$
on $\Xhat$ for $\delta$ sufficiently close to $-2$.

The metric $h_t$ is obtained by projecting $\tilde h_t$ onto its
$J_t$-invariant component. The estimate \eqref{eq:1} follows from the estimates on
$J_t-J_0$ with a proof along the same lines of  Lemma~\ref{lemma:estilaplpsi}.
\end{proof}

\subsection{Construction of the metrics}
It is convenient to work with the family of complex structure $J_t$ on
a fixed smooth manifold $\Xhat$ as in \S~\ref{sec:rep}.  Then, we
consider
$$
\omega_{t,\phi}=\omega_t + dd^c_{J_t}\phi.
$$
where $\phi$ is a function on $\Xhat$.
 If $\phi$ is small enough, $\omega_{t,\phi}$
is also the K\"ahler form of a K\"ahler metric $g_{t,\phi}$ on
$(\Xhat,J_t)$ representing $\Omega_t$.
More specifically, we have the following result:
\begin{lemma}
  Let $C$ be a positive constant and $\delta\in(-2,0)$ sufficiently
  close to $-2$. Then for every $t>0$ sufficiently
  small  and every function $\phi$  such that 
$\|\phi\|_{C^{4,\alpha}_{\delta+2}}\leq
C\epsilon^{2-\beta(\delta+2)}$, the form $\omega_{t,\phi}$ is definite
positive.
\end{lemma}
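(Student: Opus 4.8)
The plan is to reduce the positivity of $\omega_{t,\phi}=\omega_t+dd^c_{J_t}\phi$ to a pointwise bound on $dd^c_{J_t}\phi$ against the background metric. By Corollary~\ref{cor:bgmet} and Proposition~\ref{prop:omegap} the forms $\varpi_t$, $\omega'_t$ and $\omega_t$ are pairwise $\cO(\epsilon^2)$-close in $C^{2,\alpha}_\delta(t)$-norm, hence define uniformly commensurate Hermitian metrics; in particular it suffices to show that $\sup_{\Xhat}|dd^c_{J_t}\phi|_{g_t}\to 0$ as $t\to 0$, for then $\omega_{t,\phi}\geq \bigl(1-C\sup_{\Xhat}|dd^c_{J_t}\phi|_{g_t}\bigr)\,\omega_t>0$ once $t$ is small. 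Here $g_t$ is the (genuinely K\"ahler) metric of $\omega_t$ provided by Proposition~\ref{prop:omegap}.

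First I would establish the pointwise estimate
$$|dd^c_{J_t}\phi|_{g_t}\leq C\,\|\phi\|_{C^{4,\alpha}_{\delta+2}(t)}\,\rho^{\delta}\qquad\text{on }\Xhat.$$
Expanding $dd^c_{J_t}\phi$ relative to a background Levi-Civita connection, its $(1,1)$-component is the complex Hessian $\nabla^2\phi$ up to terms of the form $\nabla^{LC}J_t\cdot\nabla\phi$ and curvature-times-$(\nabla\phi,\phi)$ terms. The $j=2$ part of the weighted norm gives $|\nabla^2\phi|\leq\|\phi\|_{C^{4,\alpha}_{\delta+2}}\rho^\delta$; the curvature terms contribute $\cO(\|\phi\|\rho^{\delta+1})$ and $\cO(\|\phi\|\rho^{\delta+2})$, which are dominated by $\rho^\delta$ since $\rho$ is bounded above on $\Xhat$; and the $\nabla^{LC}J_t$ contribution is $\cO(\epsilon^2\|\phi\|\rho^{2\delta})$ by Lemma~\ref{lemma:estid}, which is also subleading because $\rho\geq c\epsilon$ forces $\epsilon^2\rho^\delta=\cO(\epsilon^{2+\delta})\to 0$ (and it vanishes identically on $\rho\leq b$, where $h_t$ is genuinely K\"ahler). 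Throughout I would use freely that the weighted norms built from $g_t$, $h_t$, $\tilde h_t$ or the model metrics are uniformly commensurate, as noted in \S\ref{sec:holder}.

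Then I would plug in the hypothesis: on the three regions the weight stays bounded below, $\rho=\rho_B\geq c$ on $\Xhat\setminus\Uhat$, $\rho=r\geq b=\epsilon^\beta\geq\epsilon$ on the neck $b\leq r\leq1$, and $\rho=\epsilon\,H_{\epsilon^{-1}}^*\rho_A\geq c\epsilon$ near the singularity (the radius function $\rho_A>0$ on $\widehat{\setC^2/\Gamma}$). Since $\delta<0$ this gives $\rho^\delta\leq C\epsilon^\delta$, so, combining with $\|\phi\|_{C^{4,\alpha}_{\delta+2}(t)}\leq C\epsilon^{2-\beta(\delta+2)}$,
$$\sup_{\Xhat}|dd^c_{J_t}\phi|_{g_t}\leq C\,\epsilon^{2-\beta(\delta+2)}\,\epsilon^{\delta}=C\,\epsilon^{(2+\delta)(1-\beta)}.$$
With $\delta\in(-2,0)$ one has $2+\delta>0$ and, since $\beta=\tfrac2{2-\delta}\in(\tfrac12,1)$, also $1-\beta>0$; hence the exponent $(2+\delta)(1-\beta)$ is strictly positive and the bound tends to $0$ with $\epsilon=t^{p/2}$, so $\omega_{t,\phi}$ is positive definite for all small $t>0$. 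The point requiring care is the uniform bookkeeping across the neck $b\leq r\leq4b$ showing that the Hessian term really dominates the cross and curvature terms — the same kind of estimate already carried out in Lemma~\ref{lemma:estid} and Lemma~\ref{lemma:estilaplpsi} — and this is precisely why the threshold $\epsilon^{2-\beta(\delta+2)}$ is the natural one: it is exactly what makes the final exponent $(2+\delta)(1-\beta)$ come out positive.
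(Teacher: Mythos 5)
Your argument is essentially the same as the paper's: translate the $C^{4,\alpha}_{\delta+2}$ bound on $\phi$ into a $C^{2,\alpha}_\delta$ bound on $dd^c_{J_t}\phi=\omega_{t,\phi}-\omega_t$, peel off the weight at its worst value $\rho\sim\epsilon$ to get a pointwise estimate $\cO(\epsilon^{(2+\delta)(1-\beta)})$, and conclude positivity since $(2+\delta)(1-\beta)>0$. You have merely spelled out more carefully the step from the weighted norm of $\phi$ to the pointwise control of $dd^c_{J_t}\phi$, which the paper leaves implicit.
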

\begin{proof}
We deduce an estimate 
$\|\omega_t-\omega_{t,\phi}\|_{C^{2,\alpha}_\delta}=\cO(\epsilon^{2-\beta(\delta+2)})$. The
worst value of the weight is $\epsilon^{-\delta}$ so we have an
estimate
$\epsilon^{-\delta}|\omega_t-\omega_{t,\phi}|=\cO(\epsilon^{2-\beta(\delta+2)})$,
hence
$|\omega_t-\omega_{t,\phi}|=\cO(\epsilon^{(1-\beta)(\delta+2)})$. Since
$(1-\beta)(\delta+2)>0$, $\omega_{\phi,t}$ is definite positive for
$t$ sufficiently small.  
\end{proof}

 We want to solve the equation
\begin{equation}
  \label{eq:csck}
\scal(g_{t,\phi}) = cst  
\end{equation}
where $\scal(g_{t,\phi})$ is the scalar curvature of the metric $g_{t,\phi}$.
The linearization of this equation at $\phi=0$ is given by a fourth order
elliptic operator $L_t$, the Lichnerowicz operator. The idea is to apply a
suitable version of the implicit function theorem in order to solve
\eqref{eq:csck}.
\begin{prop}
\label{prop:ri}
  Suppose that $\cX$ does not carry any nontrivial holomorphic vector
  field. If $-2<\delta<0$, then for sufficiently small $t>0$ the operator
  \begin{align*}
    P_t: \RR \times C^{4,\alpha}_{\delta+2}(\Xhat,t) &\to
    C^{0,\alpha}_{\delta-2}(\Xhat,t) \\
(v,\phi)&\mapsto v+ L_t\phi 
  \end{align*}
admits a right inverse $Q_t$ with norm satisfying
$\|Q_t\|\leq c\epsilon^{-\beta(\delta+2)}$ for some constant $c$ independent of $t$.
\end{prop}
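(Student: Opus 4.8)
The plan is to follow the standard scheme for producing CSCK metrics by gluing (Arezzo-Pacard, and Sz\'ekelyhidi whose argument is closest to ours), the only real novelty being that the complex structure $J_t$ varies along the family --- exactly what the estimates of \S\ref{sec:rep} were built to absorb. Write $L_B$ for the Lichnerowicz operator of the orbifold CSCK metric $\gbar$ on $\Xbar$ and $L_A$ for the Lichnerowicz operator of the tangent graviton $Y_{\dot\zeta}$; since $Y_{\dot\zeta}$ is hyperK\"ahler, hence Ricci-flat and scalar-flat, $L_A$ coincides with $\Delta^2$ up to a positive constant. The first task is to record the mapping properties of these two model operators. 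On the orbifold, $(v,\phi)\mapsto v+L_B\phi$ is an isomorphism $\RR\times C^{4,\alpha}_{\delta+2}(\Xbar)\to C^{0,\alpha}_{\delta-2}(\Xbar)$ for every $\delta\in(-2,0)$: it is injective by pairing $v+L_B\phi$ with the volume form (using $L_B1=0$ and the decay of $C^{4,\alpha}_{\delta+2}$ functions near the singular point), Fredholm because such $\delta$ avoids the indicial roots of $L_B$ (the Lichnerowicz analogue of Lemma~\ref{lemma:iroot}), and surjective because the assumption that $\cX$ has no holomorphic vector field forces the kernel of $L_B$ on the compact orbifold to reduce to the constants, so the only obstruction to solving $L_B\phi=f$, namely the mean of $f$, is removed by the free parameter $v$. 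On the ALE side, $L_A\colon C^{4,\alpha}_{\delta+2}(\widehat{\setC^2/\Gamma})\to C^{0,\alpha}_{\delta-2}(\widehat{\setC^2/\Gamma})$ is Fredholm for $\delta\in(-2,0)$ by the standard ALE analysis (parallel to Lemmas~\ref{lemma:iroot}--\ref{lemma:harmdecay}), and since the rescaled gravitons $\gmod_{\epsilon^{-2}\zeta(t)}$ converge to $Y_{\dot\zeta}$ (\S\ref{sec:summary}) it admits a right inverse bounded uniformly in $t$; its kernel is the constants (the K\"ahler-class direction), once more compensated by the parameter $v$.

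Next I would assemble an approximate right inverse $R_t$ by patching. Fix cutoffs with $\chi_A+\chi_B=1$, $\chi_A\equiv1$ on $\{\rho\le b\}$ and $\chi_A\equiv0$ on $\{\rho\ge2b\}$, where $b=\epsilon^\beta$. Given $f\in C^{0,\alpha}_{\delta-2}(\Xhat,t)$, solve the orbifold model problem on $\Xbar$ with data $\chi_B f$, obtaining $(v,\phi_B)$, and the ALE model problem on $\widehat{\setC^2/\Gamma}$ with the data $\chi_A f$ transported by $H_{\epsilon^{-1}}$ and suitably rescaled, obtaining $\phi_A$; set $R_t(f)=(v,\tilde\chi_B\phi_B+\tilde\chi_A\phi_A)$ for slightly enlarged cutoffs $\tilde\chi_A,\tilde\chi_B$. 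The size of $R_t$ is dictated by the comparison of weighted norms across the neck, \eqref{eq:comparenorms}: in the $\Xhat$-norm the ALE piece carries a factor $\epsilon^{-(\delta+2)}$, while the fact that $L_A$ scales like the $-4$th power of length contributes an $\epsilon^4$; tracking these with the gluing scale $b=\epsilon^\beta$ gives $\|R_t\|\le c\,\epsilon^{-\beta(\delta+2)}$, which is the asserted bound for $Q_t$.

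It remains to promote $R_t$ to a genuine right inverse. Since $\cX$, hence $\cMhat_t$ for small $t$, carries no holomorphic vector field, the kernel of $P_t$ is the one-dimensional space of pairs $(0,\mathrm{const})$, so it suffices to produce a right inverse; the crucial analytic input is an a priori estimate for $L_t$ on $\Xhat$ modulo the constants, of the form $c\,\epsilon^{\beta(\delta+2)}\|(v,\phi)\|\le\|v+L_t\phi\|_{C^{0,\alpha}_{\delta-2}(\Xhat,t)}$, proved by a blow-up/contradiction argument exactly parallel to the proof of Proposition~\ref{prop:linearapprox}: one extracts limits on the compact part (a $\gbar$-harmonic function in $C^{4,\alpha}_{\delta+2}(\Xbar)$, hence $0$ since it decays and $\cX$ has no holomorphic vector field) and on the rescaled ALE part (a harmonic function on $Y_{\dot\zeta}$ which a decay argument \`a la Lemma~\ref{lemma:harmdecay} forces to be constant, absorbed into the $v$-component), reaching a contradiction. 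Combined with the approximate surjectivity supplied by $R_t$ --- whose error $P_tR_t-\id$ is controlled by the commutator estimates on the neck $\rho\sim b$ together with the discrepancies $L_t-L_B$ on the compact part (Lemma~\ref{lemma:estid}, Theorem~\ref{theo:kahlmet}, using $|J_t-J_0|=O(\epsilon^2)$) and $L_t$ versus the rescaled $L_A$ on the ALE part (\S\ref{sec:summary}, Corollary~\ref{cor:estidelta}, with an argument patterned on Lemma~\ref{lemma:estilaplpsi}) --- this yields a right inverse $Q_t$ with $\|Q_t\|\le c\,\epsilon^{-\beta(\delta+2)}$. I expect this a priori estimate, and specifically the bookkeeping of weights across the neck at scale $b=\epsilon^\beta$, to be the main obstacle: it is there that the precise choice $\beta=2/(2-\delta)$ of \eqref{eq:fixbeta} and the limit $\delta\to-2$ are forced, and it is the only step where the variation of $J_t$ genuinely departs from the Arezzo-Pacard and Sz\'ekelyhidi treatment, the departure being handled by the estimates cited above.
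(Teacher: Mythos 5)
Your blueprint follows the right general scheme (two model inverses glued across a neck, followed by a perturbation argument), but it misses the one genuinely new point that the paper's proof is built around, and as a result both your surjectivity claim for the orbifold model and your accounting for the factor $\epsilon^{-\beta(\delta+2)}$ are incorrect.

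The central issue is the sign of the weight $\delta+2>0$, which is opposite to the Arezzo--Pacard/Sz\'ekelyhidi setting. On the orbifold, $L_B\colon C^{4,\alpha}_{\delta+2}(\Xbar)\to C^{0,\alpha}_{\delta-2}(\Xbar)$ has trivial kernel exactly because the positive weight excludes constants near the punctures --- but by the duality and wall-crossing computation recorded in the paper, its index is $-k$, where $k$ is the number of singular points. Adding the single scalar $v$ to the domain raises the index to $1-k$, so $(v,\phi)\mapsto v+L_B\phi$ is \emph{not} surjective once $k\ge2$. Your justification of surjectivity (``the only obstruction is the mean of $f$, removed by $v$'') is the reasoning appropriate to the weight regime $\delta+2<0$, where constants live in the domain and the cokernel really is just the constants; it does not transfer to $\delta+2>0$. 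The paper repairs this by enlarging the domain to $\hat C^{4,\alpha}_{\delta+2}=C^{4,\alpha}_{\delta+2}\oplus\RR^k$, the functions that tend to a (possibly nonzero) constant $\lambda_i$ at each puncture, with the norm \eqref{eq:5}; on this space the operator has index $0$, kernel the global constants, and one gets an honest model inverse.

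This also pinpoints the second gap: the mechanism producing the bound $\|Q_t\|\le c\,\epsilon^{-\beta(\delta+2)}$. Your bookkeeping multiplies the weight factor $\epsilon^{-(\delta+2)}$ from \eqref{eq:comparenorms} against the $\epsilon^4$ scaling of the fourth-order operator; but the data side carries the compensating factor $\epsilon^{-(\delta-2)}=\epsilon^{2-\delta}$, and once you chase the scaling through the ALE inverse these factors cancel, yielding a bound $O(1)$ rather than $\epsilon^{-\beta(\delta+2)}$. The true source of the loss, as the paper explains, is the approximate-kernel functions $\lambda_i\chi_i$ introduced by the enlarged domain $\hat C^{4,\alpha}_{\delta+2}$: they have bounded $\hat C$-norm, but once they are cut off at the gluing scale $r\sim b=\epsilon^\beta$ and measured in the glued $C^{4,\alpha}_{\delta+2}(\Xhat,t)$-norm, they contribute a factor $(\epsilon^\beta)^{-(\delta+2)}=\epsilon^{-\beta(\delta+2)}$. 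That is precisely the quantity the paper cares about, and it is also what forces the constraint $\beta=2/(2-\delta)$, $\delta\to-2$, later in the nonlinear iteration. So the main missing idea is the index shift at positive weight and the resulting need to enrich the orbifold function space with constants at the punctures --- the one place where, as the paper emphasizes, the argument genuinely departs from \cite{SzX10}.
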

From that and the initial control on the scalar curvature, it follows:
\begin{cor}\label{cor:CSCK}
  Suppose that $\cX$ does not carry any nontrivial holomorphic vector
  field. For all $\delta\in(-2,0)$ sufficiently close to $-2$, there exists a
  constant $C>0$ such that for all $t>0$ sufficiently small, there is a
  unique solution $\phi_t$, up to a constant, to the equation $\scal(g_{t,\phi})
  = cst$ with the condition that $\|\phi_t\|_{C^{4,\alpha}_{\delta+2}}\leq C\varepsilon^{2-\beta(\delta+2)}$.
\end{cor}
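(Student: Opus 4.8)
The plan is to recast the constant scalar curvature equation $\scal(g_{t,\phi})=cst$ as a fixed point problem and to solve it by the contraction mapping principle, using Proposition~\ref{prop:ri} to invert the linearisation and the scalar curvature estimate of Proposition~\ref{prop:estiscal} to control the initial error. First I would Taylor-expand $\phi\mapsto\scal(g_{t,\phi})$ about $\phi=0$ in the schematic form
$$\scal(g_{t,\phi}) = \scal(g_t) + L_t\phi + R_t(\phi),$$
where $L_t$ is the Lichnerowicz operator of $g_t$ and $R_t(\phi)$ collects the terms at least quadratic in the derivatives $\nabla^j\phi$, $2\le j\le 4$; this is well defined on the ball $B_t=\{\|\phi\|_{C^{4,\alpha}_{\delta+2}(t)}\leq C\epsilon^{2-\beta(\delta+2)}\}$ since there $\omega_{t,\phi}$ is positive by the lemma preceding the corollary. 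Combining Proposition~\ref{prop:estiscal} with the estimate $\|\omega_t-\omega'_t\|_{C^{2,\alpha}_\delta(t)}=\cO(\epsilon^2)$ of Proposition~\ref{prop:omegap} gives $\|f_t\|_{C^{0,\alpha}_{\delta-2}(t)}=\cO(\epsilon^2)$ for $f_t:=\kappa-\scal(g_t)$. Thus finding $(v,\phi)\in\RR\times C^{4,\alpha}_{\delta+2}(\Xhat,t)$ with $\scal(g_{t,\phi})=\kappa+v$ amounts to solving $P_t(-v,\phi)=f_t-R_t(\phi)$. Writing $Q_t=(v_t,\psi_t)$ for the right inverse of Proposition~\ref{prop:ri}, with $\|Q_t\|\leq c\,\epsilon^{-\beta(\delta+2)}$, this is equivalent to the fixed point equation $\phi=T_t(\phi):=\psi_t\big(f_t-R_t(\phi)\big)$, the constant being recovered afterwards as $v=-v_t(f_t-R_t(\phi))$.

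The hard part will be the quadratic estimate for the nonlinear remainder, uniform in $t$: one needs a constant $c_0$ independent of $t$ with
$$\|R_t(\phi_1)-R_t(\phi_2)\|_{C^{0,\alpha}_{\delta-2}(t)} \leq c_0\big(\|\phi_1\|_{C^{4,\alpha}_{\delta+2}(t)}+\|\phi_2\|_{C^{4,\alpha}_{\delta+2}(t)}\big)\,\|\phi_1-\phi_2\|_{C^{4,\alpha}_{\delta+2}(t)}$$
for $\phi_1,\phi_2\in B_t$. This is where the degeneration of the background metric $g_t$ as $t\to0$ must be controlled: I would expand $R_t$ as the usual finite sum of terms polynomial in $\nabla^{\ge2}\phi$ with coefficients built from $g_t$ and its curvature, and exploit that the weighted H\"older norms rescale correctly under $H_{\epsilon^{-1}}$ together with the uniform bounds on the geometry of the $g_t$ recorded in \S\ref{sec:summary}. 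These are exactly the estimates carried out in the work of Arezzo--Pacard and Sz\'ekelyhidi; the only new feature here, that the complex structure $J_t$ also varies, contributes only lower order error terms already estimated by Lemma~\ref{lemma:estid}, so I would not reproduce the details.

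Granting this, the fixed point argument runs as follows. Since $T_t(0)=\psi_t(f_t)$ satisfies $\|T_t(0)\|_{C^{4,\alpha}_{\delta+2}(t)}\leq \|Q_t\|\,\|f_t\|_{C^{0,\alpha}_{\delta-2}(t)}=\cO(\epsilon^{2-\beta(\delta+2)})$, choosing $C$ large places $T_t(0)$ in $B_t$ with half the radius to spare. For $\phi_1,\phi_2\in B_t$ the quadratic estimate and the bound on $\|Q_t\|$ give
$$\|T_t(\phi_1)-T_t(\phi_2)\|_{C^{4,\alpha}_{\delta+2}(t)} \leq c\,\epsilon^{-\beta(\delta+2)}\,c_0\,2C\,\epsilon^{2-\beta(\delta+2)}\,\|\phi_1-\phi_2\|_{C^{4,\alpha}_{\delta+2}(t)} = \cO\big(\epsilon^{\,2-2\beta(\delta+2)}\big)\,\|\phi_1-\phi_2\|_{C^{4,\alpha}_{\delta+2}(t)}.$$
With $\beta=\tfrac{2}{2-\delta}$ one has $\beta(\delta+2)=\tfrac{2(\delta+2)}{2-\delta}\to0$ as $\delta\to-2$, so for $\delta$ close enough to $-2$ the exponent $2-2\beta(\delta+2)$ is positive; hence for $t$ small $T_t$ maps $B_t$ into itself and is a $\tfrac12$-contraction there. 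The Banach fixed point theorem then yields a unique $\phi_t\in B_t$, and $g_{t,\phi_t}$ has constant scalar curvature with $\|\phi_t\|_{C^{4,\alpha}_{\delta+2}(t)}\leq C\epsilon^{2-\beta(\delta+2)}$. Uniqueness up to an additive constant follows from the uniqueness of the fixed point together with the invariance of the equation under $\phi\mapsto\phi+\mathrm{const}$: any solution $\phi\in B_t$ differs from $\psi_t(f_t-R_t(\phi))$ by an element of $\ker L_t$, which reduces to the constants because $\cX$ carries no holomorphic vector field; since $R_t$ is unchanged by shifting its argument by a constant, the shifted function $\psi_t(f_t-R_t(\phi))$ is again a fixed point of $T_t$ and a direct estimate using $\|\phi\|_{C^{4,\alpha}_{\delta+2}(t)}\leq C\epsilon^{2-\beta(\delta+2)}$ shows it lies in $B_t$, whence it equals $\phi_t$. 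This is the asserted uniqueness up to a constant.
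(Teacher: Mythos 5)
Your overall strategy — rewrite $\scal(g_{t,\phi})=\mathrm{cst}$ as a fixed-point problem, invert the linearization via Proposition~\ref{prop:ri}, and close the loop with a uniform quadratic estimate on the nonlinear remainder and the Banach fixed-point theorem — is exactly the strategy of the paper. The only structural difference is cosmetic: the paper sets $f=P_t(v,\phi)+N_t(\phi)$ and iterates in the $f$-variable, whereas you iterate directly on $\phi$. That does not matter. What matters is the uniform quadratic estimate, and here there is a genuine gap.

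You write
$$\|R_t(\phi_1)-R_t(\phi_2)\|_{C^{0,\alpha}_{\delta-2}(t)} \leq c_0\bigl(\|\phi_1\|_{C^{4,\alpha}_{\delta+2}(t)}+\|\phi_2\|_{C^{4,\alpha}_{\delta+2}(t)}\bigr)\|\phi_1-\phi_2\|_{C^{4,\alpha}_{\delta+2}(t)}$$
with $c_0$ independent of $t$. This is not the estimate that is uniform in $t$; the one used in the paper (and in Sz\'ekelyhidi's Lemma~19) places the small factor in the scale-invariant norm $C^{4,\alpha}_2$:
$$\|N_t(\phi)-N_t(\psi)\|_{C^{0,\alpha}_{\delta-2}}\leq C_2\bigl(\|\phi\|_{C^{4,\alpha}_2}+\|\psi\|_{C^{4,\alpha}_2}\bigr)\|\phi-\psi\|_{C^{4,\alpha}_{\delta+2}}.$$
The distinction is not cosmetic. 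A schematic term of the remainder like $(\nabla^4\phi)(\nabla^2\phi)$ has ``weight'' $(\delta-2)+\delta=2\delta-2$ if both factors are measured in $C^{4,\alpha}_{\delta+2}$. To put it in $C^{0,\alpha}_{\delta-2}$ one must pay the embedding constant $\sup\rho^{(2\delta-2)-(\delta-2)}=\sup\rho^{\delta}$; since $\delta<0$ and $\rho\gtrsim\epsilon$ on the glued manifold, that supremum is of size $\epsilon^{\delta}$ and blows up as $t\to0$. In other words your $c_0$ is really $\cO(\epsilon^{\delta})$, not $\cO(1)$. By contrast, measuring one factor in $C^{4,\alpha}_2$ (so its second derivative is weight-$0$, i.e.\ genuinely bounded) makes the weights add up to $\delta-2$ exactly, with no extra factor of $\epsilon$. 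This is why the paper introduces the $C^{4,\alpha}_2$ norm at this precise point and why it records the auxiliary bound $\|\phi\|_{C^{4,\alpha}_2}\leq C\epsilon^{(\delta+2)(1-\beta)}=o(1)$ before invoking the contraction.

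Concretely, your contraction exponent $2-2\beta(\delta+2)$ is too optimistic; once the missing $\epsilon^{\delta}$ is reinstated the exponent drops by $|\delta|\approx 2$, and the bookkeeping no longer closes on its face. So you cannot appeal to Arezzo--Pacard or Sz\'ekelyhidi as written for the estimate in the form you stated; the version with $C^{4,\alpha}_{\delta+2}$ on all factors is precisely the one that \emph{fails} to be uniform in $t$. To repair the proof, replace your quadratic bound by the mixed-weight bound \eqref{eq:contract} of the paper, keep track of both norms $\|\phi\|_{C^{4,\alpha}_{\delta+2}}$ and $\|\phi\|_{C^{4,\alpha}_2}$ for elements of the ball as the paper does, and redo the contraction computation with those. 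The uniqueness argument at the end (kernel of $L_t$ reduces to constants since $\cX$ has no holomorphic vector field, and constants act trivially on the equation) is fine and is the standard observation.
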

\begin{proof}
We solve the problem via the fixed point method.
The equation we are interested in can be written
$\scal(g_{t,\phi})+v=\kappa_0$, which can be written
\begin{equation}
  \label{eq:scalphi}
P_t(v,\phi)+N(v,\phi)=\kappa - \scal(h_t)  
\end{equation}
where $N$ is the non linear term of the equation.

We are looking for a solution of the form $(v,\phi)=Q_t(f)$ and the equation reads
\begin{equation}
  \label{eq:scalf}
f = \kappa-\scal(h_t)- N_t\circ Q_t(f) =: T_t(f).  
\end{equation}
Applying the fixed point theorem, the Corollary is a consequence of
the following claim.

\smallskip
\noindent\emph{Claim}. There exists $C>0$, such that for all $t>0$
sufficiently small, the operator $T_t$ maps
the ball $\|f\|_{C^{0,\alpha}_{\delta-2}}\leq C\epsilon^2$ to itself and is $\frac
12$-contractant.

\smallskip
\noindent Let us now prove the claim.  The map $N_t(v,\phi)$ depends only on
$\phi$ so we can write it $N_t(\phi)$. Then there exists $c_2,C_2>0$ such that if
$$
\|\phi\|_{C^{4,\alpha}_2},\|\psi\|_{C^{4,\alpha}_2} \leq c_2,
$$
then
\begin{equation}
  \label{eq:contract}
\|N_t(\phi)-N_t(\psi)\|_{C^{0,\alpha}_{\delta-2}}\leq C_2(\|\phi\|_{C^{4,\alpha}_2}+\|\psi\|_{C^{4,\alpha}_2})\|\phi-\psi\|_{C^{4,\alpha}_{\delta+2}}  
\end{equation}
(cf. \cite[Lemma 19]{SzX10} and notice that the condition $\delta<0$
required  there is not needed).

By Proposition~\ref{prop:estiscal}, 
$$
\|\scal(h_t)-\kappa\|_{C^{0,\alpha}_{\delta-2}}\leq \frac 12C\epsilon^2
$$
for some constant $C>0$.  Using Proposition~\ref{prop:ri}, the bound
$\|f\|_{C^{0,\alpha}_{\delta-2}}\leq C\epsilon^2$ gives on $\phi=Q_tf$ a bound $\|\phi\|_{C^{4,\alpha}_{\delta+2}}\leq
C\epsilon^{2-\beta(\delta+2)}$ and we deduce that
$$
\|\phi\|_{C^{4,\alpha}_{2}}\leq
C\epsilon^{2+\delta-\beta(\delta+2)}=C\epsilon^{ (\delta+2)(1-\beta)}.
$$
Since $\delta+2>0$ and $1-\beta>0$, we conclude that
$\|\phi\|_{C^{4,\alpha}_{2}}= o(1)$. Using \eqref{eq:contract}, we
deduce that for $t>0$ small enough, the map $T_t$ is $1/2$-contractant
on the ball  $\|f\|_{C^{0,\alpha}_{\delta-2}}\leq C\epsilon^2$.
The map $T_t$ preserves the ball since
\begin{align*}
\|T_t(f)\|_{C^{0,\alpha}_{\delta-2}}
&\leq \|T_t(f)-T_t(0)\|_{C^{0,\alpha}_{\delta-2}} + \|T_t(0)\|_{C^{0,\alpha}_{\delta-2}} \\
&\leq \frac 12 \|f\|_{C^{0,\alpha}_{\delta-2}}+  \|\scal(h_t)-\kappa\|_{C^{0,\alpha}_{\delta-2}} \\
&\leq C\epsilon^2.
\end{align*}
\end{proof}

\begin{proof}[Proof of Proposition~\ref{prop:ri}]
  This proposition is close to \cite[Proposition 20]{SzX10}, with the
  difference that the complex structure is deformed and the sign of
  the weight $\delta+2$ is opposite to that of \cite{SzX10}. The change of
  complex structure just gives an additional error term in the
  estimates so is not a substantial change. The choice of opposite
  sign of the weight is more important, and we explain briefly how to
  deal with it.

  In this kind of problem, one obtains a right inverse for $P_t$ by
  gluing a right inverse on the ALE space $Y_t$ with a right inverse
  on the orbifold part $\cX$. The point is that we consider the weight
  $\delta+2>0$. This immediately implies that on the ALE space $Y_t$, the
  operator
  $$ L:C^{4,\alpha}_{\delta+2} \longrightarrow C^\alpha_{\delta-2} $$
  is surjective (by duality the cokernel is the kernel of $L$ in
  $C^{4,\alpha}_{-\delta-2}$, which is $0$ because $-\delta-2<0$).

  Dually, the same operator $L:C^{4,\alpha}_{\delta+2} \longrightarrow C^\alpha_{\delta-2}$ on the
  orbifold has no kernel since $\delta+2>0$ rules out the constants near
  the punctures, and $\cX$ has no holomorphic vector field. But $L$
  has a cokernel: since $L$ is selfadjoint, index theory in weighted
  spaces gives that the index of $L$ is the opposite of the number of
  punctures $k$. Define a space $\hat
  C^{4,\alpha}_{\delta+2}=C^{4,\alpha}_{\delta+2}\oplus\setR^k$ of functions of the form
  \begin{equation}
u+\sum_1^k \lambda_i \chi_i,\quad u\in C^{4,\alpha}_{\delta+2}, (\lambda_i)\in \setR^k,\label{eq:6}
  \end{equation}
  where $\chi_i$ is a cutoff function which vanishes outside a small ball
  around the puncture $x_i$. For example, we can equip the space $\hat
  C^{4,\alpha}_{\delta+2}$ with the norm
  \begin{equation}
  \| f\|_{\hat C^{4,\alpha}_{\delta+2}} = \sum_1^k |f(x_i)| + \|df\|_{C^{3,\alpha}_{\delta+1}}.\label{eq:5}
\end{equation}
  Then saying that $L$ has index $-k$ translates to the fact that
  $$ \tilde L:\setR^k\oplus\hat C^{4,\alpha}_{\delta+2} \longrightarrow C^\alpha_{\delta-2} $$
  has index $0$. Since its kernel is now reduced to the constants, its
  cokernel is also reduced to the constants.

  Now an inverse for this operator (orthogonally to the constants),
  combined with the inverses of the operators at the punctures, can be
  used to construct an approximate right inverse for $P_t$. One
  deduces that $P_t$ has a right inverse $Q_t$. The only tricky point
  is to estimate the norm $\|Q_t\|$, because of the constants at the
  punctures which appear in the space $\hat C^{4,\alpha}_{\delta+2}$. These
  constants are bounded in the space $\hat C^{4,\alpha}_{\delta+2}$, but on the
  glued manifold, they blow up in the $C^{4,\alpha}_{\delta+2}$ norm. Since they
  are cut around the radius $r=\varepsilon^\beta$, they contribute at most
  by a factor $(\varepsilon^\beta)^{-2-\delta}=\varepsilon^{-\beta(\delta+2)}$ which explains
  the norm estimate given for $Q_t$ in the statement of the
  proposition.
\end{proof}

\section{Hamiltonian stationary spheres}
\label{sec:hamilt-stat-spher}

We now construct the Hamiltonian stationary spheres and prove Theorem~\ref{th:HSS} in the case of canonical singularities. The spheres are
obtained as deformations of a Lagrangian sphere in the tangent
graviton, which is holomorphic for another complex structure in the
hyperK\"ahler family, so is Hamiltonian stationary.

\subsection{Deformation theory}
As we shall now see, the deformation theory of Hamiltonian stationary
spheres is our case is very simple.
Let us remind some basic facts about Hamiltonian stationary surfaces
in a K\"ahler 4-manifold $(X,\omega)$. A Hamiltonian stationary surface is a
Lagrangian surface which is a critical point of the area for
Hamiltonian deformations. This gives an equation that can be written
in the following way: given an embedded surface $\iota_S:S\subset X$, let
$H$ be its mean curvature vector and $\alpha=H\lrcorner \omega$, then $\alpha_S=\iota_S^*\alpha$
is a 1-form on $S$ satisfying the equation $d\alpha_S = \iota_S^*\Ric$.
Then $S$ is Hamiltonian stationary if on $S$ one has
$$ \delta\alpha_S=0. $$

In the hyperK\"ahler case, one has $\alpha_S=-d\theta$, where $\theta$ is the phase
defined from the holomorphic symplectic form $\Omega$ by
$\iota_S^*\Omega=e^{i\theta}dVol_{g_S}$, and the equation is equivalent to $\Delta\theta=0$.
If $S$ is holomorphic for another complex structure in the hyperK\"ahler
family, then $S$ is minimal so obviously Hamiltonian
stationary. Moreover, in that case, one obtains readily that the
linearization of the equation is given by $f\mapsto\Delta_S^2f$, where the
infinitesimal deformations are parameterized by a function $f$ on $S$
(the graph of $df$ in $T^*S$ giving the infinitesimal Lagrangian
deformation). The important point here is that this linearization is
automatically an isomorphism
$$ C^{k,\alpha}(S)/\setR \longrightarrow C_0^{k-4,\alpha}(S), $$
where $C_0$ denote the functions $f$ on $S$ such that $\int_S f
dVol_{g_S}=0$, and the quotient by $\setR$ is natural since constants give
trivial deformations. From this we deduce immediately the following
lemma:
\begin{lemma}\label{lem:deform-theory-hamilt}
  Suppose $S$ is a Lagrangian sphere in a hyperK\"ahler 4-manifold
  $(X,\omega)$, which is holomorphic with respect to one of the complex
  structures of $X$. If $(Y,\xi)$ is a K\"ahler manifold, sufficiently
  close to $(X,\omega)$ in $C^{2,\alpha}$ norm, such that $[S]$ remains a
  Lagrangian homology class for $\xi$ ($[\xi][S]=0$), then in $(Y,\xi)$, in
  a small $C^{3,\alpha}$ neighborhood of $S$, there exists a unique
  Hamiltonian stationary sphere $T$ such that $[T]=[S]$.
\end{lemma}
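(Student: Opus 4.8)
The plan is to set this up as an application of the implicit function theorem, the relevant isomorphism being the linearization $f\mapsto\Delta_S^2 f$ discussed above. The first thing to take care of is that we perturb the symplectic form, not just the metric, so $S$ is a priori only $\omega$-Lagrangian and one cannot linearize directly at $S$ in $(Y,\xi)$; I would first reduce to the case $\xi=\omega$. Using the Weinstein Lagrangian neighbourhood theorem for $(X,\omega,S)$, a neighbourhood of $S$ is symplectomorphic to a neighbourhood of the zero section in $(T^*S,\omega_{\mathrm{can}})$; transporting $\xi$, one gets a symplectic form $\tilde\xi$ which is $C^{2,\alpha}$-close to $\omega_{\mathrm{can}}$, and since $[\xi]\cdot[S]=0$ while $\omega_{\mathrm{can}}$ is exact on a neighbourhood retracting onto the zero section, $\tilde\xi-\omega_{\mathrm{can}}=d\lambda$ for a small primitive $\lambda$. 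Moser's trick then yields a diffeomorphism $\psi$, $C^{2,\alpha}$-close to the identity and supported near $S$, with $\psi^*\tilde\xi=\omega_{\mathrm{can}}$. After replacing $(Y,\xi)$ by its pull-back under $\psi$, I may assume that $S$ is the zero section, that the symplectic form is $\omega_{\mathrm{can}}$, and that only the compatible K\"ahler metric $g_\xi$ has been perturbed, remaining $C^{2,\alpha}$-close to the hyperK\"ahler metric of $X$.

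Next I would set up the Hamiltonian stationary operator. Lagrangian submanifolds $C^1$-close to the zero section are graphs of closed $1$-forms on $S$, and since $H^1(S^2)=0$ these are exactly $S_f=\mathrm{graph}(df)$ with $f\in C^{4,\alpha}(S)/\RR$. Put $\Phi(f)=\delta\alpha_{S_f}$, the codifferential for the induced metric of $\alpha_{S_f}=\iota_{S_f}^*(H\lrcorner\omega_{\mathrm{can}})$; this is a fourth-order quasilinear elliptic operator $C^{4,\alpha}(S)/\RR\to C^{0,\alpha}_0(S)$ (values of mean zero, since $\delta$ of a $1$-form always integrates to zero), with coefficients depending on $g_\xi$ through at most two derivatives, hence continuously on $g_\xi$ measured in $C^{2,\alpha}$. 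When $g_\xi$ is the hyperK\"ahler metric, $S$ is holomorphic for one of the hyperK\"ahler complex structures, so $\Phi(0)=0$ and, as recalled above, $D\Phi(0)=\Delta_S^2$, an isomorphism $C^{4,\alpha}(S)/\RR\to C^{0,\alpha}_0(S)$. By the continuity just noted, for $\xi$ close enough to $\omega$ the linearization $L_\xi:=D\Phi(0)$ stays an isomorphism with uniformly bounded inverse, while $\Phi(0)$ — the codifferential of the mean curvature form of the zero section in $(Y,\xi)$ — is as $C^{0,\alpha}$-small as desired.

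Then I would conclude by a contraction argument: writing $\Phi(f)=\Phi(0)+L_\xi f+Q(f)$ with $Q(0)=0$, $DQ(0)=0$ and $Q$ Lipschitz with arbitrarily small constant on a small ball, the map $f\mapsto -L_\xi^{-1}\bigl(\Phi(0)+Q(f)\bigr)$ is a contraction of a small ball of $C^{4,\alpha}(S)/\RR$, yielding a unique small $f$ with $\Phi(f)=0$. The surface $T=S_f$ is then a Lagrangian sphere (being a graph over $S^2$), Hamiltonian stationary, and homologous to $S$. For uniqueness, any Hamiltonian stationary surface $C^{3,\alpha}$-close to $S$ is, once $\xi$ is close enough, the graph of a closed, hence exact, $1$-form $df'$ with $f'$ small in $C^{4,\alpha}(S)/\RR$; it therefore solves $\Phi(f')=0$ and, by uniqueness of the fixed point, equals $T$. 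I expect the only genuinely delicate step to be the reduction of the first paragraph — obtaining a clean $\xi$-Lagrangian model near $S$ while keeping track of the closeness of the metric — together with the persistence of the isomorphism property of $\Delta_S^2$; the rest is the standard, gluing-free implicit function scheme.
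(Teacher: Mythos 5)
Your overall strategy — reduce to the $\xi$-Lagrangian case by Moser, then run a fourth-order implicit function theorem on Lagrangian graphs — is attractive and would work under a slightly stronger closeness hypothesis, but as written it loses a derivative precisely where you flagged the delicate step, and this breaks the argument at the stated $C^{2,\alpha}$ regularity. The Moser vector field $X_t$ is obtained by contracting the small primitive $\lambda$ (which you can indeed take $C^{3,\alpha}$-small by solving $\Delta\lambda=\delta(\tilde\xi-\omega_{\mathrm{can}})$) with $\omega_t^{-1}$, but $\omega_t^{-1}$ is only $C^{2,\alpha}$-close to $\omega_{\mathrm{can}}^{-1}$, so $X_t$ — and hence the time-one flow $\psi$ — is only $C^{2,\alpha}$-close to the identity. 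Then $\psi^*g_\xi$, whose coefficients involve $d\psi\otimes d\psi$, is only $C^{1,\alpha}$-close to the hyperK\"ahler metric, not $C^{2,\alpha}$-close as you assert. Your operator $\Phi(f)=\delta\alpha_{S_f}$ involves \emph{two} derivatives of the ambient metric (one from the Christoffel symbols inside $H$, another from the codifferential), so with only $C^{1,\alpha}$ metric data $\Phi$ does not even map $C^{4,\alpha}(S)/\setR$ into $C^{0,\alpha}_0(S)$, and the contraction scheme in your last paragraph has no Banach-space home.

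The paper sidesteps this loss entirely by never reducing to the $\xi$-Lagrangian case. It sets up one implicit function theorem for the pair of conditions, on maps $f:S\to X$ which are \emph{a priori} only approximately Lagrangian, via
$\Psi(f,\xi)=(f^*\xi,\ \Delta_{f^*g_\xi}^{-1}\delta_{f^*g_\xi}(H_{f,\xi}\lrcorner\xi))$;
inverting the Laplacian equalizes the orders, so the linearization at $(\iota_S,\omega)$ is $\alpha\mapsto(d\alpha,\delta\alpha)$, which is an isomorphism $\Omega^1(S)\to\Omega^2_0(S)\oplus\Omega^0_0(S)$ once one restricts the target to forms of zero mean (this is where the hypothesis $[\xi][S]=0$ enters, to guarantee $\int_S f^*\xi=0$). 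In this formulation $\Psi$ is a smooth map $C^{3,\eta}\times C^{2,\eta}\to C^{2,\eta}_0\times C^{2,\eta}_0$, so the hypothesis of $C^{2,\alpha}$-closeness of $\xi$ is used directly, with no pullback by a Moser diffeomorphism. If you wish to keep your two-step structure, you would need either to strengthen the hypothesis to $C^{3,\alpha}$-closeness (which is available in the paper's applications, where the rescaled metrics converge in $C^\infty$, but changes the lemma), or to produce a smoother normalization than the naive Moser flow gives.
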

\begin{proof}
  The proof relies on two facts: if the homology class remains
  Lagrangian, it can be represented by a nearby Lagrangian surface ; and the
  linearization under Hamiltonian deformations is an isomorphism (see
  above). So the proof is standard, but we give a short argument where
  the two aspects are treated simultaneously.

  We look at maps $f:S\to X$ which are deformations of the given
  inclusion $\iota_S:S\subset X$, and consider the operator
  $$ \Phi(f,\xi)=(f^*\xi,\delta_{f^*g_\xi}(H_{f,\xi}\lrcorner \xi)), $$
  for $\xi$ a nearby K\"ahler structure (the complex structure is also
  deformed), and $H_{f,\xi}$ denotes the mean curvature vector of $f(S)$
  for the metric $\xi$. It is clear that $f(S)$ is Lagrangian stationary
  if and only if $\Phi(f)=0$.
  
  A tangent vector to the space of maps $f:S\to X$ is a section $n$ of
  the normal bundle of $S$, but we find more convenient to represent
  it by the 1-form $\alpha=n\lrcorner \omega$ on $S$. Then at the inclusion
  $\iota_S$ one has
  $$ \frac{\partial\Phi}{\partial f}(\alpha) = (d\alpha,\delta\Delta\alpha). $$
  To avoid the difference of the orders of the differential operators,
  we consider instead of $\Phi$ the operator
  $$ \Psi(f,\xi)=(f^*\xi,\Delta_{f^*g_\xi}^{-1}\delta_{f^*g_\xi}(H_{f,\xi}\lrcorner \xi)), $$
  so that
  $$ \frac{\partial\Psi}{\partial f}(\alpha) = (d\alpha,\Delta^{-1}\delta\Delta\alpha) = (d\alpha,\delta\alpha). $$
  Then $\Psi$ is a smooth operator $C^{3,\eta}\times C^{2,\eta}\to C^{2,\eta}_0\times C^{2,\eta}_0$,
  where each time the index $0$ means with zero integral over $S$ (for
  the metric $f^*g_\xi$); here we have used the hypothesis that $\Sigma$
  remains Lagrangian, so $\int_Sf^*\xi=0$. The differential $\frac{\partial\Psi}{\partial f}$ is obviously
  an isomorphism at $(f,\xi)=(\iota_S,\omega)$, since it identifies with
  $d+\delta:\Omega^1(S)\to\Omega^2(S)_0+\Omega^0(S)_0$. The result is a consequence of
  the implicit function theorem.
\end{proof}

This lemma is useful because of the following remark:
\begin{lemma}
  If $Y_\zeta$ is a gravitational instanton for some $\zeta=(\zeta_1,\zeta_2,\zeta_3)\in
  \fh\otimes\setR^3$, and $\theta$ is a positive root such that $\zeta_1\in \ker \theta$, then the
  Lagrangian homology class corresponding to $\theta$ is represented by a
  holomorphic cycle for a complex structure orthogonal to $I_1$ (and
  therefore Lagrangian for $I_1$).
\end{lemma}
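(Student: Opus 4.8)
The plan is to use the $SO_3$ symmetry of Kronheimer's family (property (iv) of \S\ref{sec:krogra}) to rotate $I_1$ into a complex structure $I_a$ which is orthogonal to $I_1$ and for which the holomorphic symplectic parameter of $Y_\zeta$ lands in $\ker\theta$; the conclusion is then immediate from part~2 of Lemma~\ref{lem:lag-hol-cycle}. The only thing that really has to be observed is that the direction in the $\setR^3$-factor singled out by $\theta$ is automatically orthogonal to the $I_1$-direction, and this is exactly where the hypothesis $\zeta_1\in\ker\theta$ enters.

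Concretely, I would set $w=(\theta(\zeta_1),\theta(\zeta_2),\theta(\zeta_3))\in\setR^3$. By hypothesis $w=(0,\theta(\zeta_2),\theta(\zeta_3))$; and $w\neq0$, for if $w=0$ then every $\zeta_j$ lies in $D_\theta=\ker\theta$, i.e. $\zeta\in\setR^3\otimes D_\theta\subset D$, contradicting the smoothness of $Y_\zeta$ (property (i)). Put $a=w/|w|\in S^2$. Since $a\perp e_1$, the complex structure $I_a=a_2I_2+a_3I_3$ anticommutes with $I_1$, that is, it is orthogonal to $I_1$. Complete $a$ to a positively oriented orthonormal basis $(a,b,c)$ of $\setR^3$ (take $b\in a^\perp$ a unit vector and $c=a\times b$). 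Writing $\zeta_v=\sum_j v_j\zeta_j\in\fh$ for $v\in\setR^3$, one has $\theta(\zeta_b)=b\cdot w=0$ and $\theta(\zeta_c)=c\cdot w=0$ because $b,c\perp w$, whereas $\theta(\zeta_a)=a\cdot w=|w|\neq0$. Hence $\zeta_b+i\zeta_c\in\fh_\setC$ lies in $\ker\theta$.

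Next, with respect to the complex structure $I_a$, the hyperK\"ahler manifold $Y_\zeta$ carries the K\"ahler form $\omega_a=\sum_j a_j\omega_j$ and the holomorphic symplectic form $\omega_b+i\omega_c$, whose periods are $\zeta_a$ and $\zeta_b+i\zeta_c$. By property (iv), applied to the rotation $u\in SO_3$ with rows $a,b,c$, the isometry $Y_\zeta\to Y_{u\zeta}$ carries this data to the standard hyperK\"ahler data of the Kronheimer space $Y_{u\zeta}=Y_{\zeta_a,\,\zeta_b+i\zeta_c}$ and is biholomorphic from $(Y_\zeta,I_a)$ to $(Y_{u\zeta},I_1)$; it also respects the identification of $H_2$ with the root lattice, since $SO_3$ moves only the $\setR^3$-factor and acts trivially on the base $\fh$ (equivalently, this follows by a connectedness argument from the smooth $\zeta$-dependence of the diffeomorphisms $F_\zeta$). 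Moreover $u\zeta\notin D$, because $Y_{u\zeta}\cong Y_\zeta$ is smooth. Since $\zeta_b+i\zeta_c\in\ker\theta$, part~2 of Lemma~\ref{lem:lag-hol-cycle} then shows that $\theta$ is represented by a holomorphic cycle in $(Y_{u\zeta},I_1)$ --- and by a holomorphic sphere if $\theta$ is primitive --- and transporting this cycle back by the isometry gives a cycle in $Y_\zeta$, holomorphic for $I_a$, representing $\theta$. Finally, because $\omega_b+i\omega_c$ is a $(2,0)$-form for $I_a$ and this cycle has complex $I_a$-dimension~$1$, the form $\omega_b+i\omega_c$ --- hence also $\omega_1$, which is a linear combination of $\omega_b$ and $\omega_c$ since $e_1\in a^\perp$ --- restricts to zero on it, so the cycle is Lagrangian for $I_1$; this is the last assertion (and is also part~1 of Lemma~\ref{lem:lag-hol-cycle}).

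The only slightly delicate point is checking that the $SO_3$-symmetry is compatible with the identification of $H_2(Y_\zeta,\setZ)$ with the root lattice; everything else is the elementary observation that $w$ has vanishing $e_1$-component, so that the rotation needed to push the complex parameter into $\ker\theta$ simultaneously moves $I_1$ to an orthogonal complex structure. No analysis is involved.
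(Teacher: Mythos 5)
Your proof is correct and is essentially the paper's argument: both find an $SO_3$ rotation, orthogonal in the $\setR^3$-factor to $e_1$, that pushes the holomorphic symplectic period into $\ker\theta$ and then invoke part~2 of Lemma~\ref{lem:lag-hol-cycle}; the paper just writes the rotation via an explicit matrix and angle $\varphi$, whereas you parameterize it intrinsically by $a=w/|w|$ (and spell out the easy points $w\neq0$ and the $SO_3$-compatibility of the $H_2$-identification, which the paper leaves implicit).
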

\begin{proof}
  Because $\theta(\zeta_1)=0$, there exists an angle $\varphi$, such that
  $$ u=
  \begin{pmatrix}
    0 & -\cos \varphi & \sin \varphi \\ 1 & 0 & 0 \\ 0 & \sin \varphi & \cos \varphi
  \end{pmatrix} \in SO_3 $$ sends $\zeta$ to $\xi=(\xi_1,\xi_c)$ such that
  $\theta(\xi_c)=0$. By the second statement in Lemma~\ref{lem:lag-hol-cycle}, the
  homology class corresponding to $\theta$ is represented by a holomorphic cycle
  for the complex structure $u^{-1}(I_1)=-\cos \varphi I_2+\sin \varphi I_3$.
\end{proof}

Together with Lemma~\ref{lem:deform-theory-hamilt} we deduce:
\begin{cor}\label{cor:exist-uniq-ham}
  Under the hypotheses of Theorem~\ref{th:CSCK3}, fix a singular point $x\in
  \cX$, with tangent graviton $Y_{\zeta_r,\zeta_c}$. Let $\theta$ be a positive root,
  such that $\zeta\in \ker \theta$ and $\theta$ is primitive for this property, so that $\Sigma$
  is represented in $Y_{\zeta_r,\zeta_c}$ by a Hamiltonian stationary sphere $S_0$
  (Lemma~\ref{lem:lag-hol-cycle}). Finally suppose that the 2-homology
  class $\Sigma\in H_2(\cMhat_t,\setZ)$ defined by $\theta$ remains Lagrangian, that is
  $\Omega_t\cdot \Sigma=0$.

  If $\omega_t$ is the CSCK metric on $X_t$ in the class $\Omega_t$, then for $t$
  small enough, $\Sigma$ can be represented by a Lagrangian stationary sphere,
  close to $S_0$. 
\end{cor}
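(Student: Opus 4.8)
The plan is to transport the problem, near the singular point $x$, via the dilation $H_{\epsilon^{-1}}$ with $\epsilon=t^{p/2}$, to a fixed neighborhood of $S_0$ inside the tangent graviton, and there invoke Lemma~\ref{lem:deform-theory-hamilt}. In the notation of that lemma I would take $(X,\omega)=(Y_{\dot\zeta},\omod_{\dot\zeta})$ with $S=S_0$: since $\dot\zeta_r\in\ker\theta$, the preceding lemma shows that $S_0$ is holomorphic for a complex structure of the hyperK\"ahler family orthogonal to $I_1$, hence minimal, hence Hamiltonian stationary; and primitivity of $\theta$ guarantees that $S_0$ is a sphere (Lemma~\ref{lem:lag-hol-cycle}). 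For $(Y,\xi)$ I would take the CSCK metric $\omega_t$ on $\cMhat_t$, restricted to the preimage of a neighborhood of $S_0$ and rescaled: $\xi_t=\epsilon^{-2}(H_{\epsilon^{-1}})_*\omega_t$, with complex structure $(H_{\epsilon^{-1}})_*J_t$.

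The main step --- and the only delicate point --- is to verify that $(Y,\xi_t)$ converges to $(X,\omega)$ in $C^{2,\alpha}$ norm on a fixed compact neighborhood $K$ of $S_0$. Three ingredients enter. First, by Definition~\ref{dfn:tangra} and \eqref{eq:assnonvanish} one has $\epsilon^{-2}\zeta(t)\to\dot\zeta$, so on $K$ the Kronheimer data $(\Imod_{\epsilon^{-2}\zeta(t)},\gmod_{\epsilon^{-2}\zeta(t)})$ converge in $C^\infty$ to $(\Imod_{\dot\zeta},\gmod_{\dot\zeta})$; moreover for $t$ small $K$ lies in the region $R\le b/\epsilon$, where by the construction of \S\ref{sec:bg} the glued Hermitian structure $h_t$, after the rescaling $\epsilon^{-2}(H_{\epsilon^{-1}})_*$, coincides with $(\Imod_{\epsilon^{-2}\zeta(t)},\gmod_{\epsilon^{-2}\zeta(t)})$. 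Second, Proposition~\ref{prop:omegap} together with Corollary~\ref{cor:bgmet} bound the harmonic representative of $\Omega_t$ against $\varpi_t$ by $\cO(\epsilon^2)$ in $C^{2,\alpha}_\delta(t)$. Third, by Corollary~\ref{cor:CSCK} the CSCK potential satisfies $\|\phi_t\|_{C^{4,\alpha}_{\delta+2}}\le C\epsilon^{2-\beta(\delta+2)}$, so $dd^c_{J_t}\phi_t$ is $\cO(\epsilon^{2-\beta(\delta+2)})$ in $C^{2,\alpha}_\delta(t)$. Thus the CSCK form differs from $\varpi_t$ by $\cO(\epsilon^2)+\cO(\epsilon^{2-\beta(\delta+2)})$ in $C^{2,\alpha}_\delta(t)$, which by \eqref{eq:comparenorms} becomes $\cO(\epsilon^{4+\delta})+\cO(\epsilon^{4+\delta-\beta(\delta+2)})$ on the ALE part --- a positive power of $\epsilon$ for $\delta$ sufficiently close to $-2$. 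On the fixed compact $K$ the weight $\rho_A$ is bounded above and below, so the weighted norms reduce to ordinary $C^{2,\alpha}$ norms and these errors tend to $0$. Combining, $\xi_t\to\omod_{\dot\zeta}$ and $(H_{\epsilon^{-1}})_*J_t\to\Imod_{\dot\zeta}$ in $C^{2,\alpha}(K)$.

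The remaining steps are routine. Under the identifications used, the classes of the $-2$-spheres (the roots) are preserved, so $[S_0]$ corresponds to the class $\Sigma\in H_2(\cMhat_t,\setZ)$; since $\Omega_t\cdot\Sigma=0$ by hypothesis and rescaling the metric by $\epsilon^{-2}$ does not affect the vanishing of a cohomological pairing, $[\xi_t]\cdot[S_0]=\epsilon^{-2}\,\Omega_t\cdot\Sigma=0$, i.e. $[S_0]$ stays Lagrangian for $\xi_t$. Lemma~\ref{lem:deform-theory-hamilt} then produces, for all small $t$, a unique Hamiltonian stationary sphere $T_t$ in a small $C^{3,\alpha}$ neighborhood of $S_0$ with $[T_t]=[S_0]$. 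Dilating back by $\epsilon$, its image $S_t\subset(\cMhat_t,\omega_t)$ is a Lagrangian (= Hamiltonian) stationary sphere representing $\Sigma$, and it is close to $S_0$ in the natural sense that, after dilating $\cMhat_t$ near $x$ by $\epsilon^{-1}$, $S_t$ is $C^{3,\alpha}$-close to $S_0$. I expect that only the packaging of the weighted estimates of \S\ref{sec:rep} and \S\ref{sec:csck-metrics} into genuine $C^{2,\alpha}$ convergence on a neighborhood of $S_0$ requires care; once that is in place the conclusion is immediate from the deformation lemma.
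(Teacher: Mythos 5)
Your proposal is correct and takes essentially the same route as the paper: rescale $\omega_t$ by $t^{-p}$, observe $C^{2,\alpha}$ convergence on compacta to the tangent graviton $Y_{\dot\zeta}$, and invoke Lemma~\ref{lem:deform-theory-hamilt}. The paper's proof is a two-sentence version of exactly this argument; your write-up just spells out explicitly (via Corollary~\ref{cor:bgmet}, Proposition~\ref{prop:omegap}, Corollary~\ref{cor:CSCK}, and the norm comparison \eqref{eq:comparenorms}) the convergence estimate that the paper cites to Corollary~\ref{cor:CSCK} without elaboration.
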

Here we use that $\omega_t$ is a CSCK metric only through the estimates
that it satisfies. The conclusion holds also for every metric in the
class $\Omega_t$ satisfying the same estimates; the CSCK metric is a
canonical example of such a metric.
\begin{proof}
  We blow up the metrics $\omega_t$: from Corollary~\ref{cor:CSCK}, on every
  compact, the K\"ahler metrics $\frac{\omega_t}{t^p}$ converge to the gravitational
  instanton $Y_{\zeta_r,\zeta_c}$ in $C^{2,\alpha}$ (actually in $C^\infty$). Then we apply
  Lemma~\ref{lem:deform-theory-hamilt}.
\end{proof}

\subsection{Minimizing property}
We now prove the minimizing property of the spheres that we
constructed, which is stated in Theorem~\ref{th:HSS}. The idea is that
a sequence of minimizers in the homotopy class must converge to a
minimizer in the tangent graviton $Y_{\zeta_1,\zeta_c}$. But since our
minimizer $S_0\subset Y_{\zeta_1,\zeta_c}$ is calibrated, it is unique, so the
sequence of minimizers must converge to $S_0$. But then in a
neighborhood of $S_0$ we have a uniqueness statement for our
Lagrangian stationary sphere. Of course the whole process relies
strongly on the fundamental results of Schoen-Wolfson \cite{SchWol01}.

Let us now give more details. For each $t>0$, by \cite{SchWol01} the
free homotopy class of $\Sigma$ can be represented by a $\omega_t$ Hamiltonian
stationary, weakly conformal map $s_t:S^2\to X_t$, each being area
minimizing in the homotopy class.

Of course the same map works for $\varpi_t=\frac{\omega_t}{t^p}$, which we now
choose as the metric on $\cMhat_t$. Since the area of the Hamiltonian
stationary sphere that we constructed is $O(t)$ for $\omega_t$, it is
bounded for $\varpi_t$, and so is the area of the collection $s_t$
which is not bigger.

The local geometry of $(\cMhat_t,\varpi_t)$ is controlled: indeed, on the
ALE part, $(\cMhat_t,\varpi_t)$ converges to $Y_{\zeta_1,\zeta_c}$, while on the
rest of the manifold, the curvature of $\varpi_t=\frac{\omega_t}{t^p}$ goes to
zero. Moreover the injectivity radius of $\varpi_t$ remains bounded
below. It follows that the local regularity results in \cite{SchWol01}
apply uniformly in $t$, in particular \cite[Theorem 2.8]{SchWol01}
there is a uniform H\"older bound on the $s_t$. Since $\Sigma^2\neq 0$, the
image of $s_t$ must cut $S_0$, and it follows that for $t$ small
enough, the image of $s_t$ is completely included in a bounded domain
of the ALE part.

Now we again claim that the compactness theorem \cite[Theorem
5.8]{SchWol01} applies in our context, because the geometry is
controlled. This implies that some sequence $s_{t_j}$ for $t_j\to0$
converges to a Lagrangian stationary, weakly conformal $W^{1,2}$ map
$s_0:S^2\to Y_{\zeta_1,\zeta_c}$, still representing the same homotopy
class. Since $S_0$ is calibrated, the family must identify to the
sphere $S_0\subset Y_{\zeta_1,\zeta_c}$. Since all $s_t$ are Lagrangian stationary,
by regularity \cite[Theorem 4.10]{SchWol01} the convergence is
smooth. Therefore for each $t>0$ small enough, $s_{t,1}:S^2\to \cMhat_t$
is an embedding converging to the standard embedding $S_0\to
Y_{\zeta_1,\zeta_c}$. By the uniqueness statement in 
Lemma~\ref{lem:deform-theory-hamilt} it must coincide 
with the Hamiltonian
stationary sphere that we constructed.

\section{$T$-singularities and $\QQ$-Gorenstein smoothings}
\label{sec:gor}

\subsection{CSCK metrics}
We extend our results in the setting of $\setQ$-Gorenstein smoothings. The
singularities that can appear are rational double points on one hand, on
the other hand cyclic singularities of type $\frac1{dn^2}(1,dnm-1)$, where
$n$ and $m$ are coprime integers. The last one is actually a $\setZ_n$ quotient
of the rational double point $\frac1{dn}(1,-1)$ by the action generated by
$(\xi,\xi^{dnm-1})$, for $\xi=e^{\frac{2\pi i}{dn^2}}$.

We now explain why the results of the previous sections extend. Roughly
speaking, the deformation theory of $\setQ$-Gorenstein smoothings is the $\setZ_n$
invariant part of the deformation theory for $A_{dn-1}$ singularities, and
the models we glue are the $\setZ_n$ quotients of $A_{dn-1}$ gravitational
instantons. These models (the tangent gravitons) are no more hyperK\"ahler
ALE spaces, but only K\"ahler Ricci flat ALE spaces, and are given
explicitly the Gibbons-Hawking ansatz, see \cite{Suv12}. Nevertheless
everything done in sections \ref{sec:defo}--\ref{sec:hamilt-stat-spher}
extends just by quotienting the local models by the action of $\setZ_n$. This
gives immediately Theorem~\ref{th:CSCK1} and Theorem~\ref{th:CSCK2} in the general
case of $T$ singularities.

\subsection{Hamiltonian stationary spheres}\label{sec:hamilt-stat-spher-1}
Here there are some interesting phenomenons happening in the case of
$T$ singularities. The starting point is the same: in the setting of
Theorem~\ref{th:HSS}, near a singular point $x\in \cX\hookrightarrow \cM\to\Delta$, we have,
up to a covering of group $\setZ_n$, a graviton $Y_{\zeta_1,\zeta_c}$, where $\zeta_1\in
\fh_\setR^{\setZ_n}$, $\zeta_c\in \fh_\setC^{\setZ_n}$, and the space $\cM_t$ is made by
gluing $Y_{\zeta_1,\zeta_c}/\setZ_n$ with the orbifold $\cX$. In particular, given
a large $\setZ_n$ invariant region $V\subset Y_{\zeta_1,\zeta_c}$, one can identify
$V/\setZ_n$ with some region in $U\subset\cM_t$ such that the metric
$\frac{\omega_t}{t^p}$ converges to the restriction to $V/\setZ_n$ of the ALE Ricci
flat metric. Denote this projection $p:V\to U$.

If we have a root $\theta\in \prs$, such that $\langle\theta,\dot \zeta\rangle=0$ and $\theta$ is primitive
for this property, and moreover $\langle\zeta_1(t),\theta\rangle=0$ for all $t$, then in the
local $\setZ_n$-covering $V$ the root $\theta$ represents a $p^*\Omega_t$ Lagrangian
class. Corollary~\ref{cor:exist-uniq-ham} applies as well, and $\theta$ can be
represented by a $p^*\omega_t$ Hamiltonian stationary sphere $S_t\subset V$, converging
to a sphere $S_0$ of the graviton $Y_{\zeta_1,\zeta_c}$, which is holomorphic with
respect to a complex structure, orthogonal to $I_1$. Then $p(S_t)$ is a
Hamiltonian stationary surface in $(\cM_t,\omega_t)$, but $p(S_t)$ might be not
embedded. This depends only on the model: $p(S_t)$ is embedded if $p(S_0)$
is, so we have to analyze the model.

Denote $\vartheta=\frac1n \sum_{g\in \setZ_n} g\cdot \theta$. We claim that, if $\vartheta\neq 0$, then
$p(S_0)$ is embedded, and we get a Hamiltonian stationary sphere in
the homology class $p_*\vartheta$.  This will end the proof of Theorem~\ref{th:HSS}. At the end of the section, we will also see some
examples of behaviors when $\vartheta=0$, resulting in the construction of a
$\setR P^2$ or a $S^2$ with a double point.

Fortunately the possible spheres $S_0$ are explicit in the
Gibbons-Hawking ansatz, so we merely have to check the above
claim. Therefore we remind briefly what we need \cite[\S5]{Suv12}.

We consider $k+1$ distinct points $p_0,\ldots,p_k\in \setR^3$, and the harmonic
function $V(x)=\frac12\sum_i\frac1{ |x-p_i| }$. Then $*dV$ is a closed
2-form on $\setR^3\setminus\{p_i\}$, which is furthermore integral (indeed, the
integral of $*dV$ on a small sphere around $p_i$ is $1$). Therefore
$*dV=d\eta$, where $\eta$ is the connection 1-form on the total space of a
circle bundle $L\to\setR^3\setminus \{p_i\}$. Because of the topology of $L$ near each
$p_i$, the restriction of $L$ to a small 2-sphere around $p_i$ is
diffeomorphic to a 3-sphere, and one can compactify $L$ into a smooth
manifold $M$, equipped with a projection $\pi:M\to\setR^3$, by adding just one
point above each $p_i$. Then it turns out that
$$ g = V (dx_1^2+dx_2^2+dx_3^2) + V^{-1}\eta^2 $$
is a smooth hyperK\"ahler metric on $M$, whose three complex structures
are given by
$$ I_i dx_i = V^{-1} \eta, \quad I_i dx_j=dx_k, $$
where  $(i,j,k)$ is a circular permutation of $(1,2,3)$. More
generally, for any $\xi=(\xi_1,\xi_2,\xi_3)\in S^2$, we have a complex structure
$I_\xi$ such that $I_\xi\sum \xi_idx_i = V^{-1}\eta$ and $I_\xi$ is a rotation of
angle $\frac\pi2$ in the plane $(\sum \xi_idx_i)^\perp \subset (\setR^3)^*$; the
corresponding K\"ahler form is $\omega_\xi=\sum_i \xi_i(dx_i\land \eta + V dx_j\land dx_k)$.
All the structures are invariant under the circle action.

If the segment $[p_a,p_b]$ does not contain another point $p_c$, then
$\pi^{-1}[a,b]$ is a 2-sphere, which is holomorphic for the complex
structure $I_\xi$, where $\xi=\frac{a-b}{ |a-b| }$, and Lagrangian for the
K\"ahler forms $\omega_\zeta$ for $\zeta\perp \xi$. It follows immediately that
$\pi^{-1}[a,b]$ is Hamiltonian stationary for the K\"ahler metric
$(M,I_\zeta,\omega_\zeta)$, where $\zeta\perp \xi$, and this gives our model spheres in the
case of $A_k$ singularities.

It is also interesting to describe the cohomology in this model:
define the 2-form
$$ \chi_i = df_i\land \eta + * df_i, \quad f_i(x)=\frac1{2V(x)|x-p_i|}. $$
It is easy to check that $\chi_i$ is a smooth closed antiselfdual 2-form
on $M$, and $\sum \chi_i=0$. This is the only relation and one gets the
representation of the cohomology of $M$ by harmonic forms:
\begin{equation}\label{eq:8}
  H^2(M,\setR) = \big\{ \sum_i c_i\chi_i, \sum_ic_i=0 \big\}. 
\end{equation}
(This actually describes the $L^2$ cohomology of $M$, which turns out
to be equal to the ordinary cohomology.)
The form $\chi_i$ evaluated on the 2-sphere $\pi^{-1}[p_a,p_b]$ gives
$$ \langle\chi_i,\pi^{-1}[p_a,p_b]\rangle = 2\pi(f_i(p_a)-f_i(p_b)) = 2\pi(\delta_{ia}-\delta_{ib}). $$
This formula justifies the equality (\ref{eq:8}).

In the case of $T$ singularities, we start from a $A_{dn-1}$
gravitational instanton, given from the Gibbons-Hawking ansatz with
$k+1=dn$ points. For a careful choice of the points $p_a$, there is a
free isometric action of $\setZ_n$ which is holomorphic for one of the
complex structures: denote $z=x_1+ix_2$ (this a $I_3$ holomorphic
function), we consider the action of a generator $\varpi=e^{\frac{2i\pi}n}$
of $\setZ_n$ given by
$$ \varpi \cdot z = \varpi z, \quad \varpi \cdot \theta = \varpi^m \theta, $$
where $\theta$ is the angular coordinate, and $n$ and $m$ are coprime. Of
course this action is well defined only if the configuration of the
$dn$ points is invariant under $z\mapsto\varpi z$, that is if they are organized
into a collection of $d$ regular centered $n$-polygons in planes
orthogonal to the vector $\partial_{x_3}$. This gives all the ALE Ricci flat
models that we need for singularities of class $T$. Incidentally, the
parameters are a collection of $d$ points (one in each polygon), that
is $3d$ real parameters, modulo the translation in the $\partial_{x_3}$
direction, so finally $3d-1$ parameters as expected ($d$ complex
parameters and $d-1$ real parameters). In particular, from
(\ref{eq:8}), the $\setZ_n$ invariant part of $H^2(M,\setR)$ has dimension $d-1$.

We can now describe Hamiltonian stationary spheres: as we have seen, these
are given by $\pi^{-1}[p_a,p_b]$, where $p_a-p_b\perp \partial_{x_3}$, that is by the
segments in the planes of the polygons. Here there are two cases:
\begin{itemize}
\item a segment $[p_a,p_b]$ between two points of two distinct polygons
  (which is possible only if the two polygons are in the same plane): if we
  change the point $p_b$ in the same polygon, we do not change the homology
  class in the quotient, because the sides of the polygon go to zero in the
  homology of the quotient; therefore we can choose $p_b$ to be the closest
  vertex to $p_a$, so that the images under $\setZ_n$ of the segment
  $[p_a,p_b]$ are disjoint; therefore in the quotient we still obtain a
  2-sphere with nonzero class in homology (indeed the pairing with $\frac1n
  \sum_{g\in \setZ_n}g \cdot (\chi_a-\chi_b)$ is nonzero);
\item a segment $[p_a,p_b]$ between two points of the same polygon:
  we consider only the two following cases (the images of the other spheres
  have more complicated crossings):
  \begin{itemize}
  \item $[p_a,p_b]$ is an edge of the polygon; then in the quotient, the
    points $p_a$ and $p_b$ represent the same point and we obtain an
    immersed 2-sphere with one double point;
  \item $n=2$ and $p_b=-p_a$: then the image of $\pi^{-1}[p_a,p_b]$ is an
    embedded $\setR P^2$.
  \end{itemize}
  The other segments $[p_a,p_b]$ are more complicated since for a $g\in \setZ_n$,
  the segment $g\cdot [p_a,p_b]$ might meet $[p_a,p_b]$ in an interior point,
  which means that one gets a double circle.
\end{itemize}

\section{Applications}
\label{sec:appli}
\subsection{Del Pezzo surfaces}
\label{sec:dp}
Our result is applied to produce extremal metrics on $\QQ$-Gorenstein smoothings
of singular extremal Del Pezzo surfaces with no nontrivial holomorphic
vector field.

More precisely, let $\cX$ be a normal Del Pezzo surface. If $\cX$
admits a $\QQ$-Gorenstein smoothing, then all the singularities of
$\cX$ must be of class $T$. So it is natural to assume that every
singularity of $\cX$ is of such type. Locally, we may pick a one parameter
$\QQ$-Gorenstein smoothing for each singularity of $\cX$. It turns out that
such local smoothing can always be globalized. In other words, one can
find a $\QQ$-Gorenstein smoothing $\cX\hkto\cM\to \Delta$ such that
the germs of deformations of the singularities are the one we started
with. This result is due to the fact that $H^2(\cX,T\cX)=0$, hence
there is no \emph{local to global obstruction} to deformation theory as
proved in \cite[Proposition 3.1]{HacPro10}. 

In particular, we can always construct in this way a one parameter
$\QQ$-Gorenstein smoothing $\cX\hkto\cM\to \Delta$ satisfying the non
degeneracy condition in the sense of Definition~\ref{df:degenerate}.
Suppose that $\cX$ admits an extremal K\"ahler metric and no nontrivial
holomorphic vector field (hence the metric must be CSCK) and that
$\Omega_t$ is a family of K\"ahler classes on $\cM_t$ that degenerates
toward the orbifold K\"ahler class.  Then, by
Theorem~\ref{th:CSCK1}, the smoothing $\cM_t$ admits a CSCK metric with K\"ahler
class $\Omega_t$ for all $t>0$ sufficiently small.

For instance, consider the Del Pezzo orbifold $\cX=(\CP^1\times
\CP^1)/\ZZ_4$, where the action of $\ZZ_4$ on the product is spanned
by
$$
([u_1:v_1],[u_2,v_2])\mapsto  ([iu_1:v_1],[iu_2,v_2]).
$$
The quotient contains exactly four singularities. Two of them are
$A_3$ singularities whereas the two others are of type $T$, modelled
on cyclic quotients of the form $\frac 14(1,1)$.

We construct a nondegenerate family of smoothings $\cX\hkto\cM\to \Delta$ as
explained above. Quite interestingly, a smoothing $\cM_t$ for $t\neq 0$ is not
diffeomorphic to the minimal resolution $\cXhat$ of $\cX$. In such a
situation $\cM_t$, as a smooth manifold, is obtained by removing a
neighborhood of the $-4$ exceptional spheres in $\cXhat$ and gluing back
two copies of the rational homology ball $T^*S^2/{\ZZ_2}=T^*\RP^2$. Such an
operation is known under the name of rational blowdown. Thus, $\cM_t$ is
the rational blowdown of $\cXhat$ for $t\neq 0$. As $\cXhat$ is an eight-point
iterated blow-up of $\CP^1\times\CP^1$, one can show that $\cM_t$ is a six-point
blow up of $\CP^1\times \CP^1$.

The product $\CP^1\times\CP^1$ can be endowed with a CSCK metric by
choosing a multiple of the standard Fubini-Study metric on each
factor. The group $\ZZ_4$ acts isometrically on the product, thus we
obtain a CSCK orbifold metric on $\cX$. 

Then we pick any family of K\"ahler classes $\Omega_t$ on $\cM_t$ for
$t>0$ that
degenerates toward the orbifold K\"ahler class. At the point, we are
ready to apply Theorem~\ref{th:CSCK1}. Unfortunately, $\cX$ does admit
nontrivial holomorphic vector fields. To get around this issue, we
work equivariantly, modulo an additional symmetry.
There is a $\ZZ_2$ action on $\CP^1\times\CP^1$ spanned by
$$
([u_1:v_1],[u_2:v_2])\mapsto ([v_1:u_1],[v_2:u_2]).
$$
This action descends to the quotient $\cX=(\CP^1\times\CP^1)/\ZZ_4$.
The main point is that $\cX$ does not carry any nontrivial holomorphic
vector field. Such a $\ZZ_2$-action on $\cX$ extends as a
$\ZZ_2$ fiberwise-action on the smoothing $\cM\to \Delta$. We may restrict our
attention to $\ZZ_2$-equivariant smoothing (i.e. such that $\ZZ_2$
acts trivially on $\Delta$) and $\ZZ_2$-invariant K\"ahler classes $\Omega_t$.
The proof of Theorem~\ref{th:CSCK1} can be done in this
$\ZZ_2$-equivariant context. It follows that $\cM_t$ admits a CSCK
metric with K\"ahler class $\Omega_t$ for all $t>0$ sufficiently small.

 In
conclusion $\cM_t$ carries aK\"ahler-Einstein metric for all $t$
sufficiently small. It should be pointed out that $\cM_t$ is not
diffeomorphic to the minimal resolution of $\cX$ for $t\neq 0$. In
fact $\cM_t$ is the full rational blow-down of $\cXhat$ which is
diffeomorphic to a $6$-point blow-up of $\CP^1\times \CP^1$.

Working with a K\"ahler-Einstein orbifold metric on $\cX$ and
$\Omega_t=c_1(\cM_t)$ we recover Tian's metric for certain special
Fano surfaces very close to the boundary of the moduli space, diffeomorphic to $\CP^1\times\CP^1$ blown up six times.
In addition  Theorem~\ref{th:HSS} applies in this setting. 
Let $E$ the homology class of an exceptional holomorphic sphere in the
resolution $\cXhat$. Then $E$ can be represented by a stationary
Lagrangian sphere in $\cM_t$ for $t>0$ sufficiently small with respect
to the
K\"ahler-Einstein metric. It is also possible to prove that $\cM_t$
contains two stationary Lagrangian  $\RP^2$   obtained by perturbing
the zero section of the tangent graviton $T^*\RP^2$.

Similar examples can be constructed by considering the CSCK orbifold
$\cX=(\CP^1\times \CP^1)/\ZZ_2$, an example considered by Spotti
\cite{SpX12}. In this case $\cX$ has four $A_1$ singularities and the
smoothings are diffeomorphic to the minimal resolution $\cXhat$. The
construction above provides a construction of CSCK metrics on certain
four-point blowups of $\CP^1\times \CP^1$. In particular, we can apply this to
the K\"ahler-Einstein case. Again, we prove that the $-2$ exceptional spheres
can be deformed to get stationary Lagrangian spheres in the smoothing
endowed with its K\"ahler-Einstein metric.

\subsection{Geometrically ruled CSCK orbifold surfaces}
\label{sec:geor}
A large class of geometrically ruled CSCK orbifolds can be constructed
via representation theory.
The idea is to consider an orbifold
Riemann surface $\Sbar$ and a twisted product $\cX=\Sbar\times_\rho \CP^1$
where $\rho$ is a morphism $\rho:\pi_1^{orb}(\Sbar)\to
\SU_2/\ZZ_2$, where $\pi_1^{orb}(\Sbar)$ is the orbifold fundamental
group. Here we require that if $p_i$ is a singular point of order
$q_i$ in $\Sbar$ and $l_i$ is the homotopy class of a small loop
around $p_i$, then $\rho(l_i)$ is of order $q_i$.

 If $\Sbar$ has only orbifold points of order $2$, then
$\cX$ has isolated singularities of type $A_1$. 
Suppose that $\Sbar$ carries a CSCK metric (we just have to exclude the case of
a teardrop with exactly one singularity of order $2$). Then the local
product metric provides a CSCK orbifold metric on $\cX$. 

Under the assumption that $\pi_1^{orb}(\Sbar)$ acts transitively on
$\CP^1$ via $\rho$ and that $\Sbar$ is not the football, the orbifold
$\cX$ does not carry any nontrivial holomorphic vector field \cite{RolSin05}. 

In the next section, we show that is is always possible to find a
nondegenerate $\QQ$-Gorenstein smoothing of $\cX$.
In particular Theorem~\ref{th:CSCK1} and Theorem~\ref{th:HSS} apply
and we may construct some new CSCK metrics on blownup ruled surfaces with
stationary Lagrangian spheres.

\subsection{Smoothing model for orbifold ruled surfaces}
The example of ruled orbifold $\cX\to \Sbar$ given in the previous section has
singularities that come by pair above each orbifold point in $\Sbar$.

More precisely, the local model is given by an orbifold surface
$\cY\to \setC/\ZZ_2$ 
where $\cY=(\setC\times \CP^1)/\ZZ_2$, the action of  $\ZZ_2$ is spanned by
$(u,[v:w])\mapsto (-u,[-v:w])$ and the projection map $\cY\to \setC/\ZZ_2$
is just induced by the first canonical projection.

Using the coordinates $(u,[v:w])$ on the ramified cover $\setC\times \CP^1$, we see that
$\ZZ_2$-invariant polynomials in the chart $w\neq 0$ are generated by
\begin{equation}
  \label{eq:chartw}
\left \{  \begin{array}{lll}
    x_1 &=&u^2\\
y_1 & = & (\frac vw)^2 \\
z_1 & =& \frac {uv}w
  \end{array}
\right .
\end{equation}
and the equation of $\cY$ in this chart is given by 
$$
x_1y_1=z_1^2,
$$
which is the equation of an $A_1$ singularity.
Similarly, in the chart $v\neq 0$, we have the invariant polynomials 
\begin{equation}
  \label{eq:chartv}
\left \{  \begin{array}{lll}
    x_2 &=&u^2\\
y_2 & = & (\frac wv)^2 \\
z_2 & =& \frac {uw}v
  \end{array}
\right .
\end{equation}
and the equation
$$
x_2y_2=z_2^2,
$$
giving a second $A_1$ singularity.

Putting together \eqref{eq:chartw} and  \eqref{eq:chartv}, we conclude
that $\cY$ is the  subvariety of $\setC\times \CP^2$
given by the equation
$$
x \alpha\beta = \gamma^2
$$
where $(x,[\alpha:\beta:\gamma])\in \setC\times \CP^2$, $x=x_1=x_2$,
$\frac\alpha\beta = y_1= \frac 1{y_2}$, $\frac \gamma\beta= z_1$ and
$\frac \gamma\alpha= z_2$.

We introduce  a family of deformation $\cY\hkto\cN\to \setC^2$ where
$\cN$ is the subvariety with points $(x,[\alpha:\beta:\gamma],\epsilon_1,\epsilon_2)\in
\setC\times\CP^2\times \setC^2$ given by the equation
$$
\epsilon_1\alpha^2+\epsilon_2\beta^2+x\alpha\beta=\gamma^2
$$
and the map $\cN\to \setC^2$ is induced by the canonical projection $
(x,[\alpha:\beta:\gamma],\epsilon_1,\epsilon_2)\mapsto
\epsilon=(\epsilon_1,\epsilon_2)$.
The singular locus of $\cN$
is contained in the hypersurface  $\epsilon_1\epsilon_2= 0$ and if neither
$\epsilon_1$ nor $\epsilon_2$ vanish, then the fiber $\cN_\epsilon$ is
a smooth deformation of $\cY$.

Using affine coordinates, one can check that the
parameters $\epsilon_i$ correspond to the parameters of the
semi-universal family of smoothings for the $A_1$ singularity. 
In particular, if we choose a line in $\CC^2$ distinct of the lines
$\epsilon_1=0$ or $\epsilon_2=0$, we obtain a nondegenerate family of
smoothings of $(\CC\times\CP^1)/\ZZ_2$.

We have a canonical projection $\cN\to \setC$ given by the coordinate
$x$. The restriction of this map $\cN_\epsilon \to \setC$ defines a
ruled surface over $\setC$ and  it is smooth unless
$\epsilon_1\epsilon_2=0$. One can also check that the fibers of the
ruling are all $\CP^1$ except when $x^2=4\epsilon_1\epsilon_2$
where the polynomial $\epsilon_1\alpha^2+\epsilon_2\beta^2+x\alpha\beta-\gamma^2$ splits as a
product of two polynomials of degree $1$ in $(\alpha,\beta,\gamma)$. If $\epsilon_1\epsilon_2\neq 0$,
there are two distinct fibers that consist of a union of two $\CP^1$
with normal crossing at one point. Topological considerations imply
that the curves  have selfintersection $-1$.
In conclusion, assuming $\epsilon_1\epsilon_2\neq 0$, the ruled
surface $\cN_\epsilon\to \setC$ is a two point blowup of $\setC\times\CP^1$ at
two distinct fibers. 

Let $\Delta^2_{1/2}$ be the ball of radius $1/2$ is $\setC^2$, so that
$|\epsilon_1\epsilon_2|<1/4$ for $\epsilon\in \Delta^2_{1/2}$. 
Let
$\cN'$ (resp. $\cN''$) be the restriction of $\cN$ to the domain $|x|\leq
2$  and $\epsilon \in \Delta^2_{1/2}$ (resp.  $x\in A=\{
1<|x|\leq 2\}$ and $\epsilon \in \Delta^2_{1/2}$). Accordingly,
we shall denote $\cY'=\cN'_0$ and $\cY''=\cN''_0$.

By definition,  $\cN''$ is equipped with a projection $\cN''\to
A \times \Delta^2_{1/2} $. This projection is a geometric ruling (a submersive
holomorphic map with fibers
$\CP^1$) for reducible curves appear only for $x^2=4\epsilon_1\epsilon_2$. Hence there exists a biholomorphism 
$$\phi:\cN''\to A  
\times \Delta^2_{1/2}\times \CP^1$$
which commutes with the projection maps to $A\times \Delta_{1/2}^2$. 

The restriction of the map $\phi$ induces an isomorphism $\cY''\to
A\times \CP^1$. Taking the product with the identity, we deduce an isomorphism
$$
\psi : \Delta^2_{1/2}\times \cY''\to A\times \Delta^2_{1/2}\times \CP^1.
$$
We shall the maps $\phi$ and $\psi$  to construct deformations of
orbifold ruled surfaces in the next section.

\subsection{Construction of deformations}
Let $\cX$ be a compact complex orbifold surface with a holomorphic embedding
$$j:\cY '\hkto \cX$$
where $\cY'=\cN'_0$. We shall construct a
smoothing of $\cX$ using the smoothing $\cN'$ described in the
previous section.

Let $\cX'$ be the complement in $\cX$ of the domain
$|x|\leq 1$ of $\cY'$. In particular, we have the restriction
$j:\cY''\hkto \cX'$. 
We introduce 
$$
\cM= (\Delta^2_{1/2}\times \cX')\sqcup \cN' /\sim
$$
The equivalence relation is given as follows: if $(\epsilon,m)\in
\Delta^2_{1/2}\times \cX'$ is such that $m=j(y)$ for some $y\in
\cY''$, we identify the point $(\epsilon,m)$ with $z\in \cN''\subset \cN'$ 
provided
$\psi(\epsilon,m)=\phi(z)$.

The complex variety $\cM$ endowed with the canonical maps
$\cX\hkto \cM\to\Delta^2_{1/2}$ is a flat deformation of $\cX$ and it
is non degenerate in the sense of Definition~\ref{df:degenerate}.

We deduce the following proposition by applying Theorem~\ref{th:CSCK1} and Theorem~\ref{th:HSS} to the family of smoothings
$\cX\hkto \cM\to \Delta$.
\begin{prop}
  Let $\cX=\Sbar\times_\rho \CP^1$ be a CSCK geometrically ruled
  surface with singularities of type $A_1$ and no nontrivial
  holomorphic vector fields as described in \S\ref{sec:geor}

Then there exists one parameter families of nondegenerate smoothings
$\cX\hkto\cM\to \Delta$. In particular, $\cX$ admits CSCK smoothings
with stationary Lagrangian spheres representing the vanishing
Lagrangian cycles.
\end{prop}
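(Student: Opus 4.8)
The plan is to feed the explicit smoothing built in the preceding subsections into Theorems~\ref{th:CSCK1} and~\ref{th:HSS}: the analytic input is already in place, so the only work is to check that the ruled orbifold $\cX$ satisfies their hypotheses.

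First I would realize a neighbourhood of each singular pair of $\cX$ as the local model $\cY'=\cN'_0$. Let $p_1,\dots,p_N\in\Sbar$ be the orbifold points, all of order $2$ by \S\ref{sec:geor}. Near $p_i$ the base carries an orbifold chart $\CC$ with the involution $u\mapsto-u$, and since $\rho(l_i)\in\SU_2/\ZZ_2=\SO_3$ has order $2$ it is a rotation by angle $\pi$; choosing the fibre coordinate on $\CP^1$ so that its axis is vertical turns $\rho(l_i)$ into $[v:w]\mapsto[-v:w]$. Hence the portion of $\cX$ over such a chart is biholomorphic to $(\CC\times\CP^1)/\ZZ_2=\cY$, and after rescaling the chart so that the model sits over $|x|\le 2$ this provides the holomorphic embedding $j:\cY'\hkto\cX$ required by the construction of deformations. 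Carrying that construction out near each $p_i$ (the relevant neighbourhoods being disjoint) produces a flat deformation whose parameter over $p_i$ is $\epsilon^{(i)}=(\epsilon_1^{(i)},\epsilon_2^{(i)})\in\Delta^2_{1/2}$, with one coordinate for each of the two $A_1$ points above $p_i$. Restricting to a generic germ of line through the origin in the total parameter space, say $\epsilon^{(i)}(t)=(a_it,b_it)$ with all $a_i,b_i\ne0$, yields a one-parameter family $\cX\hkto\cM\to\Delta$. Since the $\epsilon_j^{(i)}$ are precisely the parameters of the semi-universal smoothing of the corresponding $A_1$ singularity, whose discriminant is the hyperplane $\{\epsilon_j^{(i)}=0\}$, the conditions $\frac{\partial\epsilon_j^{(i)}}{\partial t}(0)\ne0$ say exactly that $\cM\to\Delta$ is non-degenerate at every singular point of $\cX$ in the sense of Definition~\ref{df:degenerate}. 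This proves the first assertion.

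For the second assertion, note that $\cX$ carries a CSCK orbifold metric: on a chart as above, the local product of a CSCK metric on $\Sbar$ (which exists under the hypothesis of \S\ref{sec:geor}, the teardrop being excluded) with a Fubini--Study metric on $\CP^1$ is $\rho(l_i)$-invariant, because $\rho$ takes values in the isometry group of $\CP^1$; these pieces descend to a global orbifold metric, which is CSCK and lies in some class $\Omega_0$. By the hypothesis of \S\ref{sec:geor}, $\cX$ has no nontrivial holomorphic vector field. Now let $\Omega_t$ be the constant family which, in the decomposition $H^2(\cM_t,\RR)\cong H^2_\orb(\cX,\RR)\oplus\bigoplus_x(\fh_\RR)_x$, equals $\Omega_0\oplus 0$; since $\cM_t$ is a blow-up of a ruled surface (as the description of $\cN_\epsilon$ in the previous subsection shows), $h^{2,0}(\cM_t)=0$, so this is a family of $(1,1)$-classes degenerating to $\Omega_0$, and by the remark following Proposition~\ref{prop:kahlmet} it consists of K\"ahler classes for all small $t>0$. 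Theorem~\ref{th:CSCK1} then provides a CSCK metric on $\cM_t$ in the class $\Omega_t$. At each $A_1$ point $x$ the unique positive root $\theta$ of $\gsu_2$ is primitive and, since $(\zeta_r)_x(t)\equiv 0$, satisfies $\langle\theta,(\dot\zeta_r)_x\rangle=0$ and $\langle\theta,(\zeta_r)_x(t)\rangle\equiv 0$, while $(\dot\zeta_c)_x\ne0$ by genericity of the line (for $A_1$ the only root hyperplane $\ker\theta\subset\fh$ is the origin); thus $(\cM_t,\Omega_t)$ is non-degenerate in the sense of Definition~\ref{df:degenerate2} and the vanishing cycle represented by $\theta$ remains $\Omega_t$-Lagrangian. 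Theorem~\ref{th:HSS}(1) then represents it by a smooth Hamiltonian stationary sphere $S_t\subset\cM_t$, which is a global Lagrangian area minimizer; after rescaling, $S_t$ converges to the Lagrangian sphere $S_0$ of the Eguchi--Hanson tangent graviton $Y_{0,\dot\zeta_c}$, holomorphic for a complex structure orthogonal to $I_1$, exactly as in Corollary~\ref{cor:exist-uniq-ham}.

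I do not expect a serious obstruction: the proposition is essentially a repackaging of the explicit smoothing construction together with Theorems~\ref{th:CSCK1} and~\ref{th:HSS}. The one verification deserving real care is the local identification in the first step --- one must check that, after trivializing the $\CP^1$-bundle near $p_i$, the holomorphic germ of $\cX$ along the singular pair is \emph{exactly} $\cN'_0$, since it is this that legitimizes the cut-and-paste of the previous subsection; related to it is the (routine but not empty) point that a single parameter $t$ can be chosen generic enough to make the deformation non-degenerate simultaneously at all $N$ pairs. Everything else --- the CSCK orbifold metric on $\cX$, the admissibility of the constant family $\Omega_t$, and the Lagrangian condition on the vanishing cycles --- is bookkeeping.
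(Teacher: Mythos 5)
Your proposal is correct and takes essentially the same route as the paper: the paper's own proof is a single sentence deducing the proposition by applying Theorems~\ref{th:CSCK1} and~\ref{th:HSS} to the smoothing $\cX\hkto\cM\to\Delta$ built in the two preceding subsections, and your write-up simply makes explicit the verifications (local identification with $\cY'=\cN'_0$, choice of the constant family $\Omega_t=\Omega_0\oplus 0$, nondegeneracy at each $A_1$ point via $\dot\zeta_c\neq 0$, and the Lagrangian condition $\langle\theta,(\zeta_r)_x(t)\rangle\equiv 0$) that the paper leaves implicit. The one place you are slightly more careful than the paper — and rightly flag as the real content — is the local isomorphism between a neighbourhood of each singular pair of $\cX$ and the model $(\CC\times\CP^1)/\ZZ_2$, which is what licenses the cut-and-paste construction of $\cM$; your sketch of conjugating $\rho(l_i)$ to the standard $\ZZ_2$ rotation is the correct way to see this.
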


\end{document}